%% file: papier_final.tex
\documentclass[11pt,a4paper,leqno]{amsart}

\usepackage[utf8]{inputenc}
\usepackage[T1]{fontenc}
\usepackage{amsmath,amssymb,amsthm}
\usepackage[margin=2.5cm]{geometry}
\usepackage{enumitem}
\usepackage{hyperref}
\usepackage{mathtools}
\usepackage[dvipsnames]{xcolor}
\usepackage{tikz}
\usetikzlibrary{arrows.meta,calc,decorations.markings,positioning}
\usetikzlibrary{decorations.pathmorphing,decorations.pathreplacing,fadings}
\usepackage{crossreftools} 
\usepackage{caption,floatrow}
\definecolor{darkspringgreen}{rgb}{0.09,0.6,0.1}

\newcommand{\gw}[1]{\left\langle #1 \right\rangle}
\providecommand{\h}{h}

\renewcommand{\AA}{\mathbb{A}}
\input{figures/drawing_tools}

\theoremstyle{plain}
\newtheorem{theorem}{Theorem}[section]
\newtheorem{proposition}[theorem]{Proposition}
\newtheorem{lemma}[theorem]{Lemma}
\newtheorem{corollary}[theorem]{Corollary}
\theoremstyle{definition}
\newtheorem{definition}[theorem]{Definition}
\newtheorem{notation}[theorem]{Notation}
\newtheorem{setting}[theorem]{Setting}
\theoremstyle{remark}
\newtheorem{remark}[theorem]{Remark}

\newcommand{\GW}{\mathrm{GW}}
\newcommand{\ang}[1]{\langle #1 \rangle}
\newcommand{\bb}{\mathbb}
\newcommand{\cD}{\mathcal{D}}
\newcommand{\cF}{\mathcal{F}}
\newcommand{\GWuniv}{\mathrm{GW}^{\mathrm{univ}}}
\newcommand{\GWres}{\overline{\mathrm{GW}}^{\mathrm{univ}}}
\newcommand{\cS}{\mathcal{S}}
\newcommand{\cT}{\mathcal{T}}
\newcommand{\mult}{\mathrm{mult}}
\newcommand{\sgn}{\mathrm{sgn}}
\newcommand{\rk}{\mathrm{rk}}
\newcommand{\Atrop}{\bb{A}^1}
\newcommand{\Ntrop}{N^{\Atrop,\mathrm{trop}}}
\newcommand{\Nfloor}{N^{\Atrop,\mathrm{floor}}}

\newcommand{\gammaux}{\widehat{\gamma}}

\tikzset{
  vx/.style   = {circle, fill=black, inner sep=1.2pt},
  fat/.style  = {circle, fill=black, inner sep=3pt},
  dbl/.style  = {rectangle, draw=red!70!black, fill=red!30, inner sep=0pt, minimum size=5pt},
  w2/.style   = {line width=1.6pt},
  w1/.style   = {line width=0.8pt},
  ray/.style  = {-{Stealth[length=4pt]}, line width=0.8pt},
  lbl/.style  = {font=\scriptsize},
}

\title[Merge-position invariance in enriched floor diagrams]
      {Merge-position invariance in quadratically enriched
       tropical floor diagrams}
\author{Yanis Hedjem}
\address{D\'epartement de math\'ematiques et applications,
         \'Ecole normale sup\'erieure -- PSL, Paris, France}
\email{yhedjem@clipper.ens.psl.eu}
\date{\today}

\subjclass[2020]{Primary 14T90, 14N10; Secondary 11E81, 14N35}
\keywords{Tropical enumerative geometry, $\bb{A}^1$-enumerative
  geometry, Grothendieck--Witt ring, floor diagrams,
  Pfister forms, Welschinger invariants, broccoli curves.}

\hypersetup{
  hidelinks,
  pdftitle={Merge-position invariance in quadratically enriched tropical floor diagrams},
  pdfauthor={Yanis Hedjem},
  pdfsubject={Tropical enumerative geometry},
  pdfkeywords={tropical enumerative geometry, A1-enumerative geometry, Grothendieck-Witt ring, floor diagrams, Pfister forms}
}

\begin{document}

\begin{abstract}
Jaramillo Puentes et al. give a Grothendieck--Witt valued
floor-diagram formula for rational curves in smooth toric del
Pezzo surfaces with simple and quadratic double point
conditions.  We study its dependence on the choice of merge
positions, namely on which adjacent pairs of point conditions
are merged.  Although independence of these choices follows
abstractly from the tropical correspondence and algebraic
invariance, it is not manifest in the floor-diagram expression.

We prove a wall-crossing factorisation for the floor formula:
for any two merge configurations,
\[
  \Delta N=C\prod_{j=1}^s(\ang{d_j}-\ang{1}).
\]
The coefficient~$C$ admits a fixed universal lift.  Using real
broccoli invariance, the possible obstruction is reduced to a
multiple of the virtual Pfister element
$\langle\!\langle2,d_1,\ldots,d_s\rangle\!\rangle$.  This gives
a complete tropical proof of merge-position invariance over
every admissible field in which~$2$ is a square.  Over a general
admissible field, the same tropical analysis reduces the problem
to one explicit mod-$2$ congruence for the residual coefficient;
this congruence is verified by a single Laurent-series
specialisation, using the tropical correspondence of Jaramillo
Puentes et al. and the algebraic invariance theorem of
Kass--Levine--Solomon--Wickelgren.
\end{abstract}

\maketitle

\section{Introduction}\label{sec:intro}

\subsection{Quadratically enriched tropical counts}
\label{sub:qe}

Classical enumerative geometry counts complex rational curves
of a fixed toric degree through the expected number of point
conditions.  For a smooth toric del Pezzo polygon~$\Delta$,
the resulting genus~$0$ invariant is denoted~$N_\Delta$.
Over~$\bb{R}$ the unsigned real count is not invariant, but
Welschinger~\cite{Wel03, Wel05} proved that a signed count
$W_{\Delta,s}$ is invariant when one imposes real point
conditions together with~$s$ pairs of complex conjugate
conditions.

The $\bb{A}^1$-enumerative refinements of
Levine~\cite{Lev18} and
Kass--Levine--Solomon--Wickelgren~\cite{KLSW23} replace these
integers by classes in the Grothendieck--Witt ring~$\GW(k)$
of a field~$k$ of characteristic different from~$2$.
Jaramillo~Puentes, Markwig, Pauli, and R\"ohrle
\cite{JMPR25} give a tropical model for the corresponding
counts with simple point conditions and double point
conditions defined over quadratic algebras
$k(\sqrt{d_1}),\ldots,k(\sqrt{d_s})$.
We refer to this paper as~JMPR below.
Their tropical multiplicity
$\mult^{\Atrop}(\Gamma)\in\GW(k)$ refines Mikhalkin's
complex multiplicity~\cite{Mik05}; over~$\bb{R}$, its
signature recovers Shustin's tropical Welschinger
weight~\cite{Shu06}.

\begin{theorem}[JMPR, tropical correspondence]
\label{thm:JMPR}
Let~$\Delta$ be a smooth toric del Pezzo polygon and assume
$r+2s=n(\Delta)=|\partial\Delta\cap\bb{Z}^2|-1$.  In the
range of characteristics considered in~\cite{JMPR25}, and for
quadratic field extensions as in
\cite[Setting~1.1]{JMPR25}, the algebraic quadratically
enriched count satisfies the tropical correspondence below.
When one parameter is a square, the
corresponding split case is recovered through the specialization
result \cite[Prop.~5.4]{JMPR25}.
\[
  N_\Delta^{\Atrop}(r,(d_1,\dots,d_s))
  =\Ntrop_\Delta(r,(d_1,\dots,d_s))
  \coloneqq\sum_\Gamma\mult^{\Atrop}(\Gamma),
\]
where the sum is over rational tropical stable maps of
degree~$\Delta$ through vertically stretched conditions.
Moreover,
$\Ntrop_\Delta(r,(d_1,\dots,d_s))
=\Nfloor_\Delta(r,(d_1,\dots,d_s))$
by the floor-diagram formula of~\cite[Theorem~10.13]{JMPR25}.
\end{theorem}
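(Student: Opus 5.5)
This statement is quoted from JMPR rather than proved here, so the plan is to assemble it from the results of \cite{JMPR25}. We do not re-derive any tropical geometry. There are two equalities, and we treat them separately.

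\emph{First equality: algebraic count equals tropical count.}
\begin{enumerate}[label=(\roman*)]
\item Work over a field $K=k((t))$ of Laurent series, or a Puiseux-type extension, with $k$ admissible as in \cite[Setting~1.1]{JMPR25}.
\item Choose the $r$ simple points and the $s$ double points with $K$-coordinates whose valuations are vertically stretched. The double points are defined over $K(\sqrt{d_j})$.
\item The algebraic invariance theorem of \cite{KLSW23} makes $N_\Delta^{\Atrop}$ independent of the chosen conditions, so we may compute with these special ones.
\item Tropicalising the rational curves through these conditions lands on the finitely many tropical stable maps $\Gamma$ through the tropicalised conditions.
\item The correspondence theorem of JMPR identifies the sum of the $\GW$-valued local indices of the lifts of each $\Gamma$ with $\mult^{\Atrop}(\Gamma)$. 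This local computation is done vertex by vertex, using the Mikhalkin-type lifting count together with the trace forms of the quadratic extensions attached to merged double points.
\item Finally, pass from $K$ back to $k$, using that $\GW$ of the Laurent field contains $\GW(k)$ as the residue-compatible part.
\end{enumerate}

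\emph{Degenerate case.} When some $d_j$ is a square, the condition is really a pair of $k$-rational points, and Setting~1.1 excludes it. Here we would invoke \cite[Prop.~5.4]{JMPR25}: the split count is obtained by specialising $d_j\to1$. On the tropical side this is the substitution $\ang{d_j}=\ang{1}$, and the specialisation is compatible with both sides.

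\emph{Second equality: tropical count equals floor count.}
\begin{enumerate}[label=(\roman*)]
\item Under vertical stretching, every tropical curve decomposes into floors joined by vertical elevator edges. This is Brugallé--Mikhalkin's floor decomposition, valid for smooth toric del Pezzo $\Delta$.
\item Hence each $\Gamma$ determines a marked floor diagram.
\item Conversely, each marked floor diagram arises from a computable number of tropical curves.
\item The multiplicity $\mult^{\Atrop}(\Gamma)$ factors as a product of local contributions from elevators and floors. Summing over the curves with a given diagram gives that diagram's $\GW$-weight, which is exactly \cite[Theorem~10.13]{JMPR25}.
\end{enumerate}

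The main obstacle is the correspondence step, which identifies the local lifting indices with the enriched multiplicity, especially at vertices adjacent to double-point conditions where trace forms appear. Since we only cite it, the substantive check is bookkeeping. One must confirm that the hypotheses match: the characteristic range, the $d_j$ non-squares, and the genericity of stretched conditions. One must also confirm that the correspondence applies uniformly to the chosen merge configuration.
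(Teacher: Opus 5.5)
Your proposal is correct and takes the same route as the paper, which does not prove this statement but quotes it from \cite{JMPR25}. The chain you assemble---KLSW algebraic invariance as recalled in \cite[Thm.~2.19]{JMPR25}, the Puiseux comparison \cite[Lem.~2.22]{JMPR25} (the precise form of your step~(vi)), the correspondence \cite[Thm.~1.2]{JMPR25}, \cite[Prop.~5.4]{JMPR25} for square parameters, and \cite[Theorem~10.13]{JMPR25} for the floor count---is exactly what the paper itself invokes in Step~6 of the proof of Theorem~\ref{thm:main}.
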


\subsection{The merge-position problem}
\label{sub:merge-intro}

For a lattice polygon~$\Delta$, set
\[
  n(\Delta)\coloneqq |\partial\Delta\cap\bb{Z}^2|-1.
\]
For a smooth toric del Pezzo polygon this is the expected
number of point conditions for rational curves of degree
~$\Delta$, and it is the notation used in~\cite{JMPR25}.

The floor-diagram formula starts from a vertically ordered
configuration of $r+2s$ simple point conditions and then
merges~$s$ adjacent pairs into double point conditions.
The data of which adjacent pairs are merged are the
\emph{merge positions}.  The algebraic count on the left of
Theorem~\ref{thm:JMPR} is independent of such auxiliary
choices.  In the floor-diagram product formula this invariance
is hidden by the local factors: changing a merge position
changes which vertices carry the special type-A factors
involving the parameters~$d_j$.

The abstract independence of the resulting floor count is a
formal consequence, in the range of the JMPR correspondence, of
the equality with the algebraic enriched count and the
algebraic invariance recalled in
\cite[Thm.~2.19 and Def.~2.20]{JMPR25}.  The present paper
studies the same invariance inside the floor-diagram
expression.  The main result is a tropical wall-crossing
mechanism for the floor formula: it gives a complete tropical
proof of merge-position invariance over every admissible field
with $\sqrt2\in k$.  Over a general admissible field, the same
tropical analysis reduces the remaining obstruction to one
explicit congruence modulo~$2$ for a universal residual
coefficient.  This final congruence is then checked by a
pointwise Laurent/Puiseux specialisation using the JMPR
correspondence and the KLSW algebraic invariance input.

In this form, the proof separates the tropical structure of
the floor-diagram formula from the single algebraic
specialisation used in the general case.  It makes the known
invariance visible in the formula itself and isolates the
precise residual mod-$2$ parity statement that would make the
argument entirely tropical over every admissible field.

The real input comes through the broccoli-curve theory of
Gathmann--Markwig--Schroeter.  In the real signed setting, the
tropical Welschinger count with conjugate point conditions
requires the broccoli count and its bridge comparison with
Welschinger curves \cite[\S\S3--5]{GMS13}.  The global real
invariance statement quoted below is the scalar input used in
the proof.

\subsection{Strategy of the proof}
\label{sub:strategy}

Fix two merge configurations and let~$\Delta N$ be the
difference of the corresponding floor-diagram sums.  The
tropical part of the proof is a rigidity argument in~$\GW(k)$,
performed directly on the floor-diagram product formula.

\medskip\noindent\textbf{Rigidity.}\;%
\emph{With the fixed normal form for the local factors used in
Sections~\ref{sec:floor}--\ref{sec:typeA}, the obstruction has
the form}
\[
  \Delta N
  =C\prod_{j=1}^s(\ang{d_j}-\ang{1}),
\]
\emph{where the coefficient~$C$ is the image of a universal
element}
\[
  \widetilde C=n_1\ang{1}+n_2\ang{2}+mh
  \in \GWuniv
  =\bb{Z}[V_4]/(\ang{2}+\ang{-2}-h).
\]
\emph{The integers $n_1,n_2,m$ depend only on
$\Delta,r,s$ and on the two merge configurations, not on the
field~$k$ or on the parameters~$d_j$.}

\medskip\noindent\textbf{Tropical content.}\;%
The factorisation, the extraction of the universal coefficient,
and the reduction to a Pfister-type $2$-torsion obstruction are
all obtained inside the tropical floor-diagram formalism, with
the real broccoli theorem as the only external tropical
invariance input.  These steps form the main contribution of
the paper.

The factorisation follows from three elementary features of
the JMPR floor multiplicity.  First, each~$d_j$ occurs in a
single local carrying factor: type-A, type-R, or twin-tree.
These names refer to the local factors in the JMPR
multiplicity and are recalled in Section~\ref{sec:floor}.
Secondly, setting~$d_j=1$ dissolves the corresponding labelled
double point into two adjacent simple point conditions, in the
position-preserving sense of Proposition~\ref{prop:labeled-dissolution}.
Thirdly,
after all factors
$\ang{d_j}-\ang{1}$ have been extracted, the remaining
coefficient lives in the rank-three universal ring above.

We then specialise to~$k=\bb{R}$.  The signature kills~$h$
and sends the product
$\prod_j(\ang{d_j}-\ang{1})$ to~$(-2)^s$ when all~$d_j<0$.
The broccoli invariance theorem of
Gathmann--Markwig--Schroeter implies that the real
Welschinger count is independent of the positions of the point
conditions in the toric del Pezzo case
\cite[Corollaries~5.16--5.17]{GMS13}.  Hence
$\sgn(\Delta N)=0$, so $n_1+n_2=0$.  The obstruction is
therefore a multiple of the virtual Pfister element
$\langle\!\langle 2,d_1,\dots,d_s\rangle\!\rangle$.
Witt's relation $2\ang{a}=2\ang{2a}$
(Lemma~\ref{lem:two-torsion}) makes this obstruction
$2$-torsion, reducing the proof to one parity statement.
If $\sqrt2\in k$, then $\ang{2}=\ang{1}$ in~$\GW(k)$ and the
virtual Pfister obstruction already vanishes.  Thus, over such
fields, the argument up to this point gives a purely tropical
proof of merge-position invariance.

\medskip\noindent\textbf{Tropical proof when $\sqrt2\in k$.}\;%
For admissible fields containing~$\sqrt2$, Steps~1--5 already
prove the invariance theorem.  The proof in this case remains
entirely inside the floor-diagram and real tropical input.

For arbitrary~$k$, the tropical argument leaves exactly one
residual parity.  Over an iterated Laurent series field the
corresponding Pfister form in the Witt ring is non-zero; JMPR's
algebraic invariance, Puiseux comparison, and correspondence theorem
\cite[Thm.~2.19, Lem.~2.22, and Thm.~1.2]{JMPR25}
then force the parity.  Since the coefficient is universal, the
same parity holds over every admissible field.  We also isolate
the precise tropical congruence that would replace this last
algebraic verification; see Section~\ref{sub:trop-reduction}.

\subsection{The real broccoli input}
\label{sub:broccoli}

We recall the real input in the form needed later.  Broccoli
curves were introduced in~\cite{GMS13} to give a tropical
proof of the invariance of real Welschinger numbers with
conjugate point conditions.  In this paper, ``broccoli''
always refers to this auxiliary oriented tropical count.  The
broccoli count is locally invariant under the relevant
tropical wall-crossings
\cite[Theorem~3.6]{GMS13}.  In the toric del Pezzo case it
agrees with the tropical Welschinger count by the bridge
comparison of~\cite[Corollary~5.16]{GMS13}, and therefore
the latter is independent of the chosen point configuration
\cite[Corollary~5.17]{GMS13}.

Only this consequence is used in the proof: when
$k=\bb{R}$ and all double-point parameters are negative,
the signature of the JMPR multiplicity is the tropical
Welschinger multiplicity
\cite[Proposition~5.13]{JMPR25}.  Thus any merge-position
difference has zero signature over~$\bb{R}$.

\subsection{Main theorem and plan of the paper}
\label{sub:main}

\begin{theorem}[Main theorem: tropical floor-diagram invariance]
\label{thm:main-intro}
For every Newton polygon~$\Delta$ of a smooth toric del Pezzo
surface, every perfect field~$k$ with $\mathrm{char}(k)\neq2$
(and, in positive characteristic, greater than both~$3$ and
the diameter of~$\Delta$), every
$d_1,\dots,d_s\in k^\times$, and every
$r$ with $r+2s=n(\Delta)$,
the floor-diagram count
\[
  \Nfloor_\Delta(r,(d_1,\dots,d_s))\in\GW(k)
\]
is independent of the merge positions.
If all~$d_j$ are non-squares, the double conditions are those
over the quadratic field extensions $k(\sqrt{d_j})$ of
\cite[Setting~1.1]{JMPR25}.  If some~$d_j$ is a square, the
corresponding condition is interpreted through the split
specialisation of \cite[Prop.~5.4]{JMPR25}, equivalently as the
dissolution of that double condition into two adjacent simple
conditions.
\end{theorem}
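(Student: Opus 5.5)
The plan follows the strategy of Section~\ref{sub:strategy}. Fix two merge configurations $\mu,\mu'$ for the same ordered configuration of $r+2s$ simple conditions, and set $\Delta N=\Nfloor_\Delta(r,(d_j))_\mu-\Nfloor_\Delta(r,(d_j))_{\mu'}$.

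\textbf{Step~1: factorisation.} In the JMPR floor multiplicity each parameter $d_j$ occurs in exactly one local carrying factor (type-A, type-R, or twin-tree). Each such factor is affine in $\ang{d_j}$ once the normal form of Sections~\ref{sec:floor}--\ref{sec:typeA} is fixed. Hence $\Delta N$ is multi-affine in the classes $\ang{d_1},\dots,\ang{d_s}$. Setting $d_j=1$ dissolves the $j$-th double point into two adjacent simple points, by Proposition~\ref{prop:labeled-dissolution}. After this dissolution, $\mu$ and $\mu'$ give floor sums with one fewer double point, so we can argue by induction on $s$. At $s=0$ there are no merges and $\Delta N=0$. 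By induction, every specialisation $d_j=1$ kills $\Delta N$. A multi-affine expression vanishing on every face $d_j=1$ is divisible by $\prod_j(\ang{d_j}-\ang{1})$. This gives $\Delta N=C\prod_j(\ang{d_j}-\ang{1})$.

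\textbf{Step~2: universal coefficient.} The remaining local factors involve only $\ang{1},\ang{2},\ang{-1},\ang{-2}$ and $h$. So $C$ lifts to $\widetilde C=n_1\ang1+n_2\ang2+mh$ in $\GWuniv$. The $\ang{-1}$ terms are absorbed because $(\ang{-1}-\ang1)(\ang{d}-\ang1)$ reduces, via $\ang{a}+\ang{-a}=h$ and $h\cdot(\ang d-\ang1)=0$. The integers $n_1,n_2,m$ come from integer-valued combinatorics of the floor diagrams, so they do not depend on $k$ or on the $d_j$.

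\textbf{Step~3: signature.} Take $k=\bb{R}$ and all $d_j<0$. By \cite[Prop.~5.13]{JMPR25} the signature of the JMPR multiplicity is Shustin's Welschinger weight. The broccoli invariance \cite[Cor.~5.16--5.17]{GMS13} then gives $\sgn\Delta N=0$. Since $\sgn h=0$ and $\sgn\prod(\ang{d_j}-\ang1)=(-2)^s\neq0$, we get $n_1+n_2=0$. Because $h\cdot(\ang{d}-\ang1)=0$, it follows that $\Delta N=n_2(\ang2-\ang1)\prod_j(\ang{d_j}-\ang1)$, which is $\pm n_2$ times the virtual Pfister element $\langle\!\langle2,d_1,\dots,d_s\rangle\!\rangle$. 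By Lemma~\ref{lem:two-torsion}, $2(\ang2-\ang1)\cdot(\ang d-\ang1)=0$, so only the parity of $n_2$ matters. If $\sqrt2\in k$, then $\ang2=\ang1$ and the proof is complete.

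\textbf{Step~4: parity of $n_2$ (main obstacle).} A purely tropical route is not available here, so I would specialise. Take $k_0=\bb{Q}((t_1))\cdots((t_s))$, or $\bb{R}((t_1))\cdots$, and $d_j=t_j$. The Pfister form $\langle\!\langle2,t_1,\dots,t_s\rangle\!\rangle$ is anisotropic there, because $2$ is not a square in $\bb{Q}$, $\ang{2}\neq\ang1$, and iterated residue maps give nonvanishing in the Witt ring. By the correspondence Theorem~\ref{thm:JMPR}, the Puiseux comparison \cite[Lem.~2.22]{JMPR25}, and KLSW invariance \cite[Thm.~2.19]{JMPR25}, both floor sums equal the same algebraic count, so $\Delta N=0$ over $k_0$. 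Therefore $n_2$ is even. The delicate points are two:
\begin{itemize}
\item ensuring that $k_0$ lies in the admissible range, which holds in characteristic $0$;
\item checking that the Pfister element is nonzero in $\GW(k_0)$ and not merely of even rank.
\end{itemize}
The second point follows from iterated second-residue computations. Since $n_2$ is universal, $\Delta N=0$ over every admissible $k$. The split cases with some $d_j$ a square are covered by the dissolution interpretation of Step~1.
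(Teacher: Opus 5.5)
Your proposal is correct and follows the paper's proof essentially step for step: the multi-affine dissolution cascade, the universal coefficient $\widetilde C\in\GWuniv$, the broccoli signature argument forcing $n_1+n_2=0$, $2$-torsion of the virtual Pfister element, and parity closure over $\bb{Q}((t_1))\cdots((t_s))$ via residue/Springer anisotropy together with the JMPR correspondence and algebraic invariance. Comparing arbitrary configurations directly rather than reducing to unit shifts is harmless, since the inductive hypothesis covers all configurations with $s-1$ double points; but drop the parenthetical alternative $\bb{R}((t_1))\cdots$, because $\sqrt2\in\bb{R}$ gives $\ang{2}=\ang{1}$ there, so the Pfister element vanishes and cannot detect the parity.
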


\begin{corollary}[Complete tropical proof over admissible fields containing $\sqrt2$]
\label{cor:sqrt2-tropical}
Under the hypotheses of Theorem~\ref{thm:main-intro}, assume
that $\sqrt2\in k$.  Then the merge-position invariance of the
floor count~$\Nfloor_\Delta$ follows from the
floor-diagram factorisation, labelled dissolution, the universal
coefficient reduction, and the tropical broccoli invariance
input.  The proof is internal to the tropical floor-diagram
formalism and does not use the Laurent/Puiseux specialisation
step.
\end{corollary}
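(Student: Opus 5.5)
The proof assembles the tropical steps described in Section~\ref{sub:strategy} and stops before the Laurent-series specialisation. Fix two merge configurations and write $\Delta N\in\GW(k)$ for the difference of the corresponding floor-diagram sums $\Nfloor_\Delta(r,(d_1,\dots,d_s))$.

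\textbf{Step 1: factorisation.} Each $d_j$ enters the JMPR floor multiplicity through exactly one local carrying factor: type-A, type-R, or twin-tree. So $\Delta N$ is affine in each $\ang{d_j}$ separately. Specialising $d_j=1$ dissolves the $j$-th labelled double point into two adjacent simple conditions, in the position-preserving sense of Proposition~\ref{prop:labeled-dissolution}. After this dissolution the two configurations agree at that slot, or reduce to a configuration with one fewer double point where induction on $s$ applies. Hence the $\ang{d_j}$-constant part cancels, and $\ang{d_j}-\ang{1}$ divides $\Delta N$. Iterating over $j$ gives $\Delta N=C\prod_{j}(\ang{d_j}-\ang{1})$.

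\textbf{Step 2: universal coefficient.} The remaining local factors, after extraction, involve only $\ang{1}$, $\ang{2}$, $\ang{-2}$ and $h$. So $C$ is the image of $\widetilde C=n_1\ang{1}+n_2\ang{2}+mh\in\GWuniv$, with integers $n_1,n_2,m$ independent of $k$ and of the $d_j$. The care here is in checking that the normal form of the local factors from Sections~\ref{sec:floor}--\ref{sec:typeA} really has this shape.

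\textbf{Step 3: real input.} Take $k=\bb{R}$ and all $d_j<0$. The signature kills $h$ and sends $\prod_j(\ang{d_j}-\ang{1})$ to $(-2)^s$, so
\[
  \sgn(\Delta N)=(n_1+n_2)(-2)^s .
\]
By \cite[Proposition~5.13]{JMPR25}, this signature is a difference of tropical Welschinger counts. By \cite[Corollaries~5.16--5.17]{GMS13}, those counts are independent of the point configuration, so $\sgn(\Delta N)=0$. Hence $n_1+n_2=0$, and
\[
  \widetilde C=n_1(\ang{1}-\ang{2})+mh .
\]

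\textbf{Step 4: kill the $h$-term.} Since $h(\ang{d}-\ang{1})=0$ in $\GW(k)$ for every $d$, the $h$-term vanishes whenever $s\ge1$; for $s=0$ there is nothing to prove. So $\Delta N=n_1(\ang{1}-\ang{2})\prod_j(\ang{d_j}-\ang{1})$, which up to sign is $n_1$ times the virtual Pfister element $\langle\!\langle2,d_1,\dots,d_s\rangle\!\rangle$.

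\textbf{Step 5: use $\sqrt2\in k$.} If $\sqrt2\in k$, then $\ang{2}=\ang{1}$ in $\GW(k)$, so $\ang{1}-\ang{2}=0$ and $\Delta N=0$. No Laurent/Puiseux step, and no appeal to Lemma~\ref{lem:two-torsion}, is needed.

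The main obstacle is Step 1. One must show that setting $d_j=1$ in the three carrying factors reproduces exactly the floor multiplicities for two adjacent simple points. This has to hold uniformly for both merge configurations, so that the induction closes. One must also check that ordering the extraction over $j$ does not introduce new $d$-dependence into $C$. I would handle each carrying-factor type separately, via Proposition~\ref{prop:labeled-dissolution}, before combining.
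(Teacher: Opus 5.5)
Your proposal is correct and is essentially the paper's own argument. The paper proves this corollary as Steps~1--5 of the proof of Theorem~\ref{thm:main}: the dissolution cascade, the lift of the top multilinear coefficient to $\GWuniv$, the signature/broccoli argument giving $n_1+n_2=0$, killing $mh$ via $h(\ang{d}-\ang{1})=0$, and finally $\ang{2}=\ang{1}$ when $\sqrt2\in k$.

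Two details you should make explicit. First, in the iteration, specialising $d_{a}=1$ only yields $(R_0+R_1)\prod_{\ell\in S_q}(\ang{d_\ell}-\ang{1})=0$, not $R_0+R_1=0$, because $\GW(k)$ has zero divisors; this is enough, exactly as in Lemma~\ref{lem:multiaffine-cascade}. Second, the real signature step uses the factorisation over $\bb{R}$, so it invokes the inductive hypothesis over $\bb{R}$ as well; this stays within the $\sqrt2$ setting since $\sqrt2\in\bb{R}$.
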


In the range where the floor-diagram formula of
\cite[Theorem~10.13]{JMPR25} applies, the independence also
follows abstractly from the JMPR correspondence with the
algebraic count.  The theorem above is included here with a
different emphasis: Section~\ref{sec:main} proves the
floor-diagram factorisation and the universal residual
reduction that explain the invariance tropically.
Corollary~\ref{cor:sqrt2-tropical} is the case where this
tropical explanation is already a complete proof;
Section~\ref{sub:trop-reduction} identifies the remaining
mod-$2$ congruence needed to make the proof fully tropical over
every admissible field.

\medskip
Section~\ref{sec:gw} recalls the arithmetic of the
Grothendieck--Witt ring and sets up the universal
ring~$\GWuniv$.  Section~\ref{sec:floor} recalls the floor-diagram
formalism of~\cite[\S10]{JMPR25} and the vertex factorisation
of the enriched multiplicity.  Section~\ref{sec:typeA}
establishes the algebraic identities for the type-A product
that drive the cascade; the key output is
Proposition~\ref{prop:typeA}.  Section~\ref{sec:identities}
records the square-field calculation
$\gammaux(m,1)=m^{\Atrop}$, the JMPR dissolution statement
behind~\cite[Prop.~5.4]{JMPR25}, and the broccoli-based
tropical invariance of the Welschinger count
(\cite[Corollary~5.17]{GMS13}).  Section~\ref{sec:linear}
proves the localisation and linearity statements.
Section~\ref{sec:main} contains the proof of the main
theorem, first presented as a six-step outline and then
carried out in detail.  Steps~1--5 are the new tropical
floor-diagram proof; they already prove the theorem when
$\sqrt2\in k$.  Step~6 is used only to close the residual
mod-$2$ parity over arbitrary admissible fields by a pointwise
algebraic specialisation over an iterated Laurent series field.
The section also gives the residual coefficient formulation of
the purely tropical parity problem.
Section~\ref{sec:outlook} compares the present proof to the
broccoli technique and discusses an open direction.

\subsection{Conventions and terminology}
\label{sub:conventions}

All fields satisfy $\mathrm{char}(k)\neq 2$.  When we say
``admissible field'', we mean a field satisfying the
hypotheses of Theorem~\ref{thm:main-intro} and
Setting~\ref{set:JMPR}.
The symbol~$m$ always denotes a positive integer (an edge
weight); we write $c=\frac{m-1}{2}$ when $m$ is odd.
We use ``simple point'' for a single point condition and
``double point'' for a merged adjacent pair with quadratic
parameter~$d_j$.  A \emph{unit shift} means moving one
merged adjacent pair by one position in the vertical order,
leaving the other merged pairs fixed.
The phrase ``vertically stretched'' means that the point
conditions are separated far enough in the vertical direction
for tropical curves to decompose into floors; the precise
genericity condition is the one used in~\cite[\S10]{JMPR25}.
If $d_j$ is a square, the phrase ``double point with parameter
$d_j$'' always means the split case defined by the
specialisation of~\cite[Prop.~5.4]{JMPR25}; in the cascade it
is the same operation as replacing that merged adjacent pair by
two simple point conditions.
The phrase ``wall-crossing difference'' always means the
difference between the two floor-diagram sums attached to two
merge configurations; no general enriched tropical
wall-crossing theorem is assumed.
We follow the setting and notation of~\cite[\S1.1]{JMPR25}.

\section{The Grothendieck--Witt ring and a universal model}
\label{sec:gw}

\begin{notation}\label{not:gw}
For $a\in k^\times$, write $\ang{a}$ for the class in
$\GW(k)$ of the rank-$1$ symmetric bilinear form
$(x,y)\mapsto axy$.  Every class in $\GW(k)$ is a
$\bb{Z}$-linear combination of such one-dimensional forms, and
the symbols satisfy the multiplicative relations
\begin{equation}\label{eq:gw-rels}
  \ang{a}\ang{b}=\ang{ab},\qquad
  \ang{a}=\ang{ab^2}\qquad(a,b\in k^\times).
\end{equation}
The \emph{hyperbolic form} is
$h\coloneqq\ang{1}+\ang{-1}$, of rank~$2$.  The
\emph{augmentation ideal} is
\[
  I=\ker(\rk\colon\GW(k)\to\bb{Z}),
\]
generated by $\{\ang{a}-\ang{1}\colon a\in k^\times\}$.
\end{notation}

\begin{lemma}[Arithmetic of $\GW(k)$]\label{lem:gw-arith}
The following hold in $\GW(k)$.
\begin{enumerate}[label=\textup{(\roman*)},nosep]
\item\label{it:hyp}
  $\ang{a}+\ang{-a}=h$ for all $a\in k^\times$.
\item\label{it:absorb}
  $h\cdot\ang{a}=h$ for all $a\in k^\times$.
\item\label{it:hsq}
  $h^2=2h$.
\item\label{it:annihilate}
  $h\cdot(\ang{d}-\ang{1})=0$; equivalently, $hI=0$.
\end{enumerate}
\end{lemma}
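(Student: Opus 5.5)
The plan is to derive everything from one basic fact about binary forms, namely that the form $\ang{a}+\ang{-a}$ is isotropic, together with the multiplicative relations \eqref{eq:gw-rels}. Item~\ref{it:hyp} is the only genuinely geometric input; the other three items will follow from it by formal manipulation in the ring $\GW(k)$.

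For \ref{it:hyp}, I will use the standard fact that a nondegenerate binary symmetric bilinear form which represents zero nontrivially is isometric to the hyperbolic plane. This needs $\mathrm{char}(k)\neq2$, which holds by convention. The form $ax^2-ay^2$ vanishes at $(1,1)$, so it is isotropic. Taking $a=1$, the form $\ang{1}+\ang{-1}$ is then also isotropic, hence isometric to the same hyperbolic plane. Both classes therefore coincide with $h$.

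If I want this concrete, I will write down the change of basis directly. On $k^2$ with form $\mathrm{diag}(a,-a)$, set $e=(1,1)$ and $f=(2a)^{-1}(1,-1)$. Then $e$ and $f$ are both isotropic and $b(e,f)=1$. Hence $\mathrm{diag}(a,-a)$ is isometric to the standard hyperbolic plane, and that plane is in turn isometric to $\mathrm{diag}(1,-1)$.

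For \ref{it:absorb}, I will multiply \ref{it:hyp} with $a=1$ by $\ang{a}$ and use $\ang{a}\ang{b}=\ang{ab}$:
\[
  h\ang{a}=\ang{a}+\ang{-a}=h .
\]
For \ref{it:hsq}, I will expand $h^2=h(\ang{1}+\ang{-1})=h\ang{1}+h\ang{-1}$. By \ref{it:absorb} each of the two summands equals $h$, so $h^2=2h$.

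For \ref{it:annihilate}, I will apply \ref{it:absorb} twice to get $h\ang{d}-h\ang{1}=h-h=0$. The second part of \ref{it:annihilate} then follows because $I$ is generated by the elements $\ang{a}-\ang{1}$, as recalled in Notation~\ref{not:gw}; multiplication by $h$ is additive, so vanishing on these generators gives $hI=0$. If one wants a direct check of the generation claim, take $x=\sum n_i\ang{a_i}$ with $\sum n_i=0$. Then $x=\sum n_i(\ang{a_i}-\ang{1})$, so $x$ is a $\mathbb{Z}$-combination of the generators.

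The only step with real content is \ref{it:hyp}, specifically the classification of isotropic binary forms. This is where $\mathrm{char}(k)\neq2$ is used, and the explicit basis change above settles it.
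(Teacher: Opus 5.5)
Your proof is correct and follows the paper's argument essentially verbatim: you get (i) from the explicit hyperbolic basis $e+f$, $(e-f)/(2a)$ of $\ang{a}+\ang{-a}$, and then derive (ii)--(iv) formally by multiplying $h=\ang{1}+\ang{-1}$ by $\ang{a}$ and using $h\ang{a}=h$. Your additional check that $I$ is generated by the elements $\ang{a}-\ang{1}$ simply makes the equivalence with $hI=0$ explicit.
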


\begin{proof}
The binary form $\ang{a}+\ang{-a}$ is hyperbolic: if
$e,f$ are the two diagonal basis vectors, then
$e+f$ and $(e-f)/(2a)$ form a hyperbolic basis.  This
proves~\ref{it:hyp}.
Multiplying the identity $h=\ang{1}+\ang{-1}$ by~$\ang{a}$
gives $h\ang{a}=\ang{a}+\ang{-a}=h$, proving~\ref{it:absorb}.
Then $h^2=h(\ang{1}+\ang{-1})=h+h=2h$, and
$h(\ang{d}-\ang{1})=h-h=0$.
\end{proof}

\begin{remark}[The ring $\GW(\bb{R})$]\label{rem:gw-R}
Over~$\bb{R}$, $\ang{a}=\ang{1}$ if $a>0$ and
$\ang{a}=\ang{-1}$ if $a<0$; the \emph{signature}
$\sgn\colon\GW(\bb{R})\to\bb{Z}$ defined by
$\sgn(\ang{a})=\mathrm{sign}(a)$ is a ring homomorphism,
with $\sgn(h)=0$.  In particular $\ang{2}=\ang{1}$ over
$\bb{R}$.
\end{remark}

\begin{lemma}[$2$-torsion of $\ang{1}-\ang{2}$ and its
  consequences]\label{lem:two-torsion}
For every field~$k$ with $\mathrm{char}(k)\neq2$:
\begin{enumerate}[label=\textup{(\roman*)},nosep]
\item\label{it:2tors-a}
  $2(\ang{a}-\ang{2a})=0$ in~$\GW(k)$ for every
  $a\in k^\times$; in particular
  $2(\ang{1}-\ang{2})=0$.
\item\label{it:2tors-pfister}
  For every $s\geq 0$ and $d_1,\dots,d_s\in k^\times$, the
  virtual Pfister element
  $\langle\!\langle 2,d_1,\dots,d_s\rangle\!\rangle
  =(\ang{1}-\ang{2})\prod_{l=1}^s(\ang{1}-\ang{d_l})$
  satisfies $2\langle\!\langle 2,d_1,\dots,d_s\rangle\!\rangle=0$
  in~$\GW(k)$.
\end{enumerate}
\end{lemma}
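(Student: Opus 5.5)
The first step is to prove \ref{it:2tors-a} by showing that the two binary forms $2\ang{a}=\ang{a}+\ang{a}$ and $2\ang{2a}=\ang{2a}+\ang{2a}$ are isometric. Take the form $a x^2+a y^2$ and change variables by $x=u+v$, $y=u-v$. Then
\[
  a(u+v)^2+a(u-v)^2=2a\,u^2+2a\,v^2 .
\]
Since $\mathrm{char}(k)\neq2$, this substitution is invertible: its determinant is $-2\neq0$. Hence $\ang{a}+\ang{a}\cong\ang{2a}+\ang{2a}$ as symmetric bilinear forms. Passing to classes in $\GW(k)$ gives $2\ang{a}=2\ang{2a}$, that is $2(\ang{a}-\ang{2a})=0$. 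The special case $a=1$ is $2(\ang{1}-\ang{2})=0$. An alternative route multiplies the $a=1$ identity by $\ang{a}$ using the relations \eqref{eq:gw-rels}, but the direct change of variables already handles every $a$ at once.

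For \ref{it:2tors-pfister}, the plan is to use that $\GW(k)$ is a commutative ring and that the Pfister element contains $\ang{1}-\ang{2}$ as a factor. Multiplying by~$2$ gives
\[
  2\langle\!\langle 2,d_1,\dots,d_s\rangle\!\rangle
  =\bigl(2(\ang{1}-\ang{2})\bigr)\prod_{l=1}^s(\ang{1}-\ang{d_l})
  =0\cdot\prod_{l=1}^s(\ang{1}-\ang{d_l})=0 ,
\]
by part \ref{it:2tors-a}. The case $s=0$ is part \ref{it:2tors-a} itself.

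No step is a real obstacle. The only point needing care is that the identity holds in $\GW(k)$ and not merely in the Witt ring. This is immediate, because the isometry above is an isometry of the forms themselves and no hyperbolic summands are cancelled, so the classes agree in the Grothendieck group. The hypothesis $\mathrm{char}(k)\neq2$ is used exactly for the invertibility of the substitution.
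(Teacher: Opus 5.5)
Your proof is correct and is essentially the paper's. The paper gets $2\ang{a}=\ang{2a}+\ang{2a^3}=2\ang{2a}$ from Witt's relation $\ang{x}+\ang{y}=\ang{x+y}+\ang{xy(x+y)}$ with $x=y=a$; your substitution $x=u+v$, $y=u-v$ is the explicit isometry behind that instance of the relation, and part (ii) is deduced the same way, by factoring out $2(\ang{1}-\ang{2})$.
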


\begin{proof}
\ref{it:2tors-a}:\; Witt's relation
$\ang{x}+\ang{y}=\ang{x+y}+\ang{xy(x+y)}$ with $x=y=a$
gives $2\ang{a}=\ang{2a}+\ang{2a^3}=2\ang{2a}$, so
$2(\ang{a}-\ang{2a})=0$ for every $a\in k^\times$.
Taking $a=1$ gives $2(\ang{1}-\ang{2})=0$.
\ref{it:2tors-pfister}:\;
$2\langle\!\langle 2,d_1,\dots,d_s\rangle\!\rangle
=\bigl(2(\ang{1}-\ang{2})\bigr)\prod(\ang{1}-\ang{d_l})
=0$.
\end{proof}

\begin{definition}[Universal Grothendieck--Witt coefficient
  ring]\label{def:Guniv}
Let $V_4\coloneqq\bb{Z}/2\oplus\bb{Z}/2$ be the Klein
four-group with generators identified with the square
classes of $-1$ and~$2$.  The integral group ring
$\bb{Z}[V_4]=\bb{Z}[\ang{-1},\ang{2}]/(\ang{-1}^2-1,\ang{2}^2-1)$
has $\bb{Z}$-basis $\{\ang{1},\ang{-1},\ang{2},\ang{-2}\}$
with $\ang{-2}\coloneqq\ang{-1}\ang{2}$.
Set $h\coloneqq\ang{1}+\ang{-1}$.  Define
\[
  \GWuniv\coloneqq
  \bb{Z}[V_4]\Big/
  \bigl(\ang{2}+\ang{-2}-h\bigr),
\]
the quotient imposing the \emph{hyperbolic relation}
$\ang{2}+\ang{-2}=h$.
As a $\bb{Z}$-module, $\GWuniv$ is free of rank~$3$ with
basis $\{\ang{1},\ang{-1},\ang{2}\}$; equivalently one may
use $\{\ang{1},h,\ang{2}\}$, with
$\ang{-1}=h-\ang{1}$ and
$\ang{-2}=h-\ang{2}$.

For each field~$k$ with $\mathrm{char}(k)\neq2$, the
identity $\ang{2}+\ang{-2}=h$ holds in~$\GW(k)$
(Lemma~\ref{lem:gw-arith}\ref{it:hyp}), so the assignment
$\ang{a}\mapsto\ang{a}_k$ for
$a\in\{\pm1,\pm2\}$ defines a unique ring homomorphism
\[
  \varphi_k\colon\GWuniv\longrightarrow\GW(k),
  \qquad\ang{a}\longmapsto\ang{a}_k,\quad
  h\longmapsto h_k.
\]
\end{definition}

\begin{lemma}[Basic properties of $\GWuniv$]
\label{lem:GWuniv-props}
\mbox{}
\begin{enumerate}[label=\textup{(\arabic*)},nosep]
\item\label{it:GWuniv-basis}
  $\GWuniv$ is a free $\bb{Z}$-module of rank~$3$ with basis
  $\{\ang{1},\ang{-1},\ang{2}\}$; equivalently, with basis
  $\{\ang{1},h,\ang{2}\}$ one has $\ang{-1}=h-\ang{1}$ and
  $\ang{-2}=h-\ang{2}$.
\item\label{it:GWuniv-indep}
  For each $\alpha\in\{1,2\}$, the pair
  $(\ang{\alpha},h)$ is $\bb{Z}$-linearly independent
  in~$\GWuniv$.
\item\label{it:GWuniv-map}
  For every field~$k$ with $\mathrm{char}(k)\neq2$ the
  ring homomorphism $\varphi_k$ of
  Definition~\ref{def:Guniv} is well-defined and satisfies
  $\varphi_k(h)=h_k$.
\end{enumerate}
\end{lemma}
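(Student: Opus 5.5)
We argue directly from the presentation in Definition~\ref{def:Guniv}. First we compute the group ring. The group $V_4$ has four elements, namely $1$, $-1$, $2$ and $-2$ as square classes, so $\bb{Z}[V_4]$ is free of rank~$4$ on $\{\ang{1},\ang{-1},\ang{2},\ang{-2}\}$. The relation element $\rho=\ang{2}+\ang{-2}-\ang{1}-\ang{-1}$ is the ideal generator. Note that $\ang{-1}\rho=\rho$, since multiplication by $\ang{-1}$ swaps $\ang{1}\leftrightarrow\ang{-1}$ and $\ang{2}\leftrightarrow\ang{-2}$. Similarly $\ang{2}\rho=-\rho$, since multiplication by $\ang{2}$ swaps $\ang{1}\leftrightarrow\ang{2}$ and $\ang{-1}\leftrightarrow\ang{-2}$. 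Hence the ideal $(\rho)$ equals $\bb{Z}\rho$, a rank-one direct summand, because $\rho$ has a coefficient $\pm1$. The quotient is therefore free of rank~$3$. Since $\rho$ expresses $\ang{-2}$ as $\ang{1}+\ang{-1}-\ang{2}$, the images of $\ang{1},\ang{-1},\ang{2}$ form a basis. The change of basis to $\{\ang{1},h,\ang{2}\}$ is unimodular, given by $h=\ang{1}+\ang{-1}$, and the formulas $\ang{-1}=h-\ang{1}$ and $\ang{-2}=h-\ang{2}$ are immediate. This proves~\ref{it:GWuniv-basis}.

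For~\ref{it:GWuniv-indep}, $\ang{1}$ and $h$ are two members of the basis $\{\ang{1},h,\ang{2}\}$, so they are linearly independent. Likewise $\ang{2}$ and $h$ are members of the same basis, so they are independent too. The only real content here is the identification of the ideal with $\bb{Z}\rho$ in~\ref{it:GWuniv-basis}. Without that identification, a priori some multiple relation could collapse $\ang{2}$ onto $\ang{1}$ modulo $h$.

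For~\ref{it:GWuniv-map}, we use the universal property of group rings. The map $a\mapsto\ang{a}_k$ on $\{\pm1,\pm2\}$ is a group homomorphism $V_4\to\GW(k)^\times$, by the multiplicativity~\eqref{eq:gw-rels} and $\ang{a}^2=\ang{a^2}=\ang{1}$. It is well defined even if some of these classes coincide in $k^\times/k^{\times2}$, because it is induced from the multiplicative map $\{\pm1,\pm2\}\subset k^\times\to\GW(k)$. This yields a ring map $\bb{Z}[V_4]\to\GW(k)$. It kills $\rho$ by Lemma~\ref{lem:gw-arith}\ref{it:hyp} with $a=2$ and $a=1$, so it factors through $\GWuniv$, uniquely since the symbols generate. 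Finally $\varphi_k(h)=\ang{1}_k+\ang{-1}_k=h_k$. There is no real obstacle; the one point to check carefully is the computation $\ang{2}\rho=-\rho$, which makes the ideal rank one.
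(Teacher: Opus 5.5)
Your proof is correct and follows the same route as the paper's: for (1) you eliminate $\ang{-2}$ using the single relation, for (2) you read independence off the resulting basis, and for (3) you check that the defining relations hold in $\GW(k)$. Your computation $\ang{-1}\rho=\rho$, $\ang{2}\rho=-\rho$, which shows that the ideal $(\rho)$ is just $\bb{Z}\rho$, makes explicit a point that the paper's one-line argument for (1) takes for granted.
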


\begin{proof}
\ref{it:GWuniv-basis}: $\bb{Z}[V_4]$ is free on
$\{\ang{1},\ang{-1},\ang{2},\ang{-2}\}$; the imposed relation
$\ang{2}+\ang{-2}=h=\ang{1}+\ang{-1}$ yields
$\ang{-2}=\ang{1}+\ang{-1}-\ang{2}$, reducing the rank
to~$3$.
\ref{it:GWuniv-indep}: in basis
$\{\ang{1},\ang{-1},\ang{2}\}$, $h=(1,1,0)$ and
$\ang{\alpha}\in\{(1,0,0),(0,0,1)\}$; both pairs are
independent.
\ref{it:GWuniv-map}: follows from the definition, since
the relations
$\ang{-1}_k^2=\ang{2}_k^2=\ang{1}_k$ and
$\ang{2}_k+\ang{-2}_k=h_k$ all hold in $\GW(k)$
(Lemma~\ref{lem:gw-arith}\ref{it:hyp}).

\end{proof}

\begin{remark}\label{rem:GWuniv-nokill}
The homomorphism $\varphi_k$ is in general not injective.
For instance, over $\bb{Q}$ Witt's relation
$\ang{a}+\ang{b}=\ang{a+b}+\ang{ab(a+b)}$ with $a=-1$, $b=2$
gives $\ang{-1}+\ang{2}=\ang{1}+\ang{-2}$ in~$\GW(\bb{Q})$.
In~$\GWuniv$, the element
$\ang{-1}+\ang{2}-\ang{1}-\ang{-2}
=(h-\ang{1})+\ang{2}-\ang{1}-(h-\ang{2})
=2\ang{2}-2\ang{1}\neq 0$
has coordinates $(-2,0,2)$ in
basis~\ref{it:GWuniv-basis}.  Hence the
$\bb{Q}$-specific identity is not imposed
on~$\GWuniv$, and $\ker(\varphi_\bb{Q})\supseteq
2\bb{Z}\cdot(\ang{2}-\ang{1})$ at least.  This extra
flexibility is what makes $\GWuniv$ a common target for
all fields.
\end{remark}

\begin{lemma}[Field-independence]\label{lem:field-indep}
Let $X\in\GWuniv$ satisfy
$X\in\bb{Z}\ang{\alpha}+\bb{Z}h$ for some
$\alpha\in\{1,2\}$, and write $X=n\ang{\alpha}+mh$ with
$n,m\in\bb{Z}$.  Then $n$ and $m$ are uniquely determined by
$X$, and
$\varphi_k(X)=n\ang{\alpha}_k+mh_k$ in $\GW(k)$ for
every~$k$.
\end{lemma}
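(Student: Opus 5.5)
The statement has two parts. The uniqueness of $(n,m)$ follows from the linear independence in Lemma~\ref{lem:GWuniv-props}\ref{it:GWuniv-indep}. The formula for $\varphi_k(X)$ follows from the fact that $\varphi_k$ is a ring homomorphism, hence additive and $\bb{Z}$-linear.

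For uniqueness, suppose $X=n\ang{\alpha}+mh=n'\ang{\alpha}+m'h$ in $\GWuniv$. Subtracting gives $(n-n')\ang{\alpha}+(m-m')h=0$. By Lemma~\ref{lem:GWuniv-props}\ref{it:GWuniv-indep}, the pair $(\ang{\alpha},h)$ is $\bb{Z}$-linearly independent, so $n=n'$ and $m=m'$.

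To make this concrete, I would work in the basis $\{\ang{1},\ang{-1},\ang{2}\}$ of Lemma~\ref{lem:GWuniv-props}\ref{it:GWuniv-basis}. There $h=(1,1,0)$.
\begin{itemize}
\item For $\alpha=1$, the element $n\ang{1}+mh$ has coordinates $(n+m,m,0)$. The second coordinate recovers $m$, and then the first recovers $n$.
\item For $\alpha=2$, the element $n\ang{2}+mh$ has coordinates $(m,m,n)$. This reads off $m$ and $n$ directly.
\end{itemize}
So $n$ and $m$ are even given explicitly as coordinate functionals of $X$.

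For the image, $\varphi_k$ is a ring homomorphism by Definition~\ref{def:Guniv} and Lemma~\ref{lem:GWuniv-props}\ref{it:GWuniv-map}, so it is additive and commutes with integer multiples. Hence
\[
\varphi_k(X)=n\,\varphi_k(\ang{\alpha})+m\,\varphi_k(h).
\]
By definition $\varphi_k(\ang{\alpha})=\ang{\alpha}_k$, and by Lemma~\ref{lem:GWuniv-props}\ref{it:GWuniv-map} $\varphi_k(h)=h_k$. This gives $\varphi_k(X)=n\ang{\alpha}_k+mh_k$.

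The main subtlety is not in the argument but in its interpretation. The integers $n,m$ are determined in $\GWuniv$, not in $\GW(k)$, since $\varphi_k$ need not be injective (Remark~\ref{rem:GWuniv-nokill}). The lemma asserts only that the universal coefficients push forward to give the stated expression over every field. It does not assert that $n,m$ can be recovered from $\varphi_k(X)$. I would add a sentence making this explicit, because later steps (e.g.\ deducing $n_1+n_2=0$ from a signature computation) rely on the coefficients living in $\GWuniv$ independently of $k$.
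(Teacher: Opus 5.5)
Your proposal is correct and follows the paper's proof: uniqueness comes from the $\bb{Z}$-linear independence of $(\ang{\alpha},h)$ in Lemma~\ref{lem:GWuniv-props}\ref{it:GWuniv-indep}, and the image formula comes from $\varphi_k$ being a ring homomorphism with $\varphi_k(\ang{\alpha})=\ang{\alpha}_k$ and $\varphi_k(h)=h_k$. Your explicit coordinates $(n+m,m,0)$ and $(m,m,n)$ in the basis $\{\ang{1},\ang{-1},\ang{2}\}$ are correct; they are a more concrete version of the same independence argument.
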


\begin{proof}
Uniqueness of $n,m$ is
Lemma~\ref{lem:GWuniv-props}~\ref{it:GWuniv-indep};
$\varphi_k$-compatibility follows from
$\varphi_k(h)=h_k$ and $\varphi_k(\ang{\alpha})=\ang{\alpha}_k$
(Lemma~\ref{lem:GWuniv-props}~\ref{it:GWuniv-map}).
\end{proof}

\begin{lemma}[Multi-affine cascade]\label{lem:multiaffine-cascade}
Let $R$ be a commutative ring and set
\[
  A_s=R[x_1,\dots,x_s]/(x_1^2-1,\dots,x_s^2-1).
\]
Every $F\in A_s$ has a unique expansion
\[
  F=\sum_{I\subseteq\{1,\dots,s\}} a_I x_I,
  \qquad x_I=\prod_{i\in I}x_i,\quad a_I\in R.
\]
Fix an order $j_1,\dots,j_s$ of the variables.  Recursively
write
\[
  F_{q-1}=A_q+x_{j_q}B_q,\qquad
  S_q=A_q+B_q,\qquad F_q=B_q ,
\]
inside the remaining multi-affine algebra.  Then
\[
  F=\sum_{q=1}^{s} S_q\prod_{p<q}(x_{j_p}-1)
    +a_{\{1,\dots,s\}}\prod_{p=1}^{s}(x_{j_p}-1).
\]
Consequently, for any ring homomorphism $\psi\colon A_s\to B$,
if
\[
  \psi\!\left(S_q\prod_{p<q}(x_{j_p}-1)\right)=0
  \quad\text{for all }q,
\]
then
\[
  \psi(F)=\psi(a_{\{1,\dots,s\}})
  \prod_{p=1}^{s}\bigl(\psi(x_{j_p})-1\bigr).
\]
\end{lemma}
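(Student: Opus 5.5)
The plan is to prove the expansion by induction on the number of variables, to reduce the displayed formula to a one-line telescoping identity, and to read off the consequence by applying the ring homomorphism~$\psi$. I do not expect a genuine obstacle. The only point needing care is bookkeeping: checking that each $F_q$ really lies in the multi-affine algebra in the remaining variables $x_{j_{q+1}},\dots,x_{j_s}$, and identifying the last one, $F_s$, with the top coefficient $a_{\{1,\dots,s\}}$.

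\emph{Existence and uniqueness of the expansion.} I will use $A_s\cong A_{s-1}[x_s]/(x_s^2-1)$. Since $x_s^2-1$ is monic of degree~$2$, division with remainder shows that $A_{s-1}[x_s]/(x_s^2-1)$ is a free $A_{s-1}$-module with basis $\{1,x_s\}$. Induction on~$s$ then gives that $A_s$ is a free $R$-module with basis $\{x_I\}_{I\subseteq\{1,\dots,s\}}$. This yields both existence and uniqueness of the coefficients~$a_I$. It also shows that the splitting $F_{q-1}=A_q+x_{j_q}B_q$, with $A_q,B_q$ free of $x_{j_q}$, is well defined and unique at every stage.

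\emph{The telescoping identity.} The key step is the rewriting
\[
  F_{q-1}=A_q+x_{j_q}B_q=(A_q+B_q)+(x_{j_q}-1)B_q=S_q+(x_{j_q}-1)F_q .
\]
Here $F_q=B_q$ involves only $x_{j_{q+1}},\dots,x_{j_s}$, so the recursion stays in the smaller multi-affine algebras. Substituting repeatedly, starting from $F_0=F$, gives
\[
  F=\sum_{q=1}^{s}S_q\prod_{p<q}(x_{j_p}-1)+F_s\prod_{p=1}^{s}(x_{j_p}-1).
\]
To identify $F_s$, I will show by induction on~$q$ that
\[
  F_q=\sum_{I\supseteq\{j_1,\dots,j_q\}}a_I\,x_{I\setminus\{j_1,\dots,j_q\}} .
\]
Indeed, $B_q$ collects exactly the monomials of $F_{q-1}$ that contain $x_{j_q}$, with that variable removed. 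For $q=s$ the only index set in the sum is $\{1,\dots,s\}$, so $F_s=a_{\{1,\dots,s\}}\in R$, which proves the displayed formula.

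\emph{The consequence.} Apply the ring homomorphism $\psi$ to the identity just proved. By hypothesis every summand $\psi\bigl(S_q\prod_{p<q}(x_{j_p}-1)\bigr)$ vanishes. Multiplicativity of~$\psi$ together with $\psi(1)=1$ gives $\psi\bigl(\prod_p(x_{j_p}-1)\bigr)=\prod_p\bigl(\psi(x_{j_p})-1\bigr)$. Here $a_{\{1,\dots,s\}}$ is viewed in $A_s$ through the structure map $R\to A_s$. This leaves
\[
  \psi(F)=\psi\bigl(a_{\{1,\dots,s\}}\bigr)\prod_{p=1}^{s}\bigl(\psi(x_{j_p})-1\bigr),
\]
as claimed.
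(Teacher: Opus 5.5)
Your proof is correct and follows the same route as the paper's: you iterate the rewriting $A_q+x_{j_q}B_q=(A_q+B_q)+(x_{j_q}-1)B_q$ starting from $F_0=F$, identify the final remainder $F_s$ with $a_{\{1,\dots,s\}}$, and then apply $\psi$. The only difference is that you also prove two things the paper leaves implicit: uniqueness of the expansion, via freeness of $A_{s-1}[x_s]/(x_s^2-1)$ over $A_{s-1}$, and the explicit formula for $F_q$.
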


\begin{proof}
The identity
$A_q+x_{j_q}B_q=(A_q+B_q)+B_q(x_{j_q}-1)$ gives the displayed
decomposition by iteration.  At each step $B_q$ is the
coefficient of $x_{j_q}$ in the current multi-affine
expression; after all variables have been treated, the
remaining coefficient is exactly $a_{\{1,\dots,s\}}$.
Applying~$\psi$ gives the final assertion.
\end{proof}

\section{Floor diagrams and the vertex-product multiplicity}
\label{sec:floor}

We review the constructions of~\cite[\S\S4 and~10]{JMPR25}
that we need.

\begin{setting}[{\cite[Setting~1.1]{JMPR25}}]
\label{set:JMPR}
Fix the Newton polygon~$\Delta$ of a smooth toric del Pezzo
surface~$S$, a perfect field~$k$ as in
Theorem~\ref{thm:main-intro}, and non-negative integers
$r,s$ with $r+2s=n(\Delta)$.
For $j=1,\dots,s$, let $d_j\in k^\times$ and
$Q_j$ be a double point condition with quadratic
parameter~$d_j$, over the possibly split algebra
$k(\sqrt{d_j})$; for
$i=1,\dots,r$ let $P_i\in S(k)$ be a simple point condition.
All conditions are taken in vertically stretched generic
position.
\end{setting}

When $d_j$ is a square, the notation $k(\sqrt{d_j})$ means
the split quadratic \'etale algebra
$k[x]/(x^2-d_j)\simeq k\times k$.  The corresponding
double point is interpreted by the specialization
\cite[Prop.~5.4]{JMPR25}: it is the limit in which the
double point is replaced by two adjacent simple point
conditions.  Thus the formula below is used uniformly for
field extensions and for split quadratic algebras.

\begin{definition}[Floor diagram,
  {\cite[Definition~10.1]{JMPR25}}]\label{def:floor}
A \emph{floor diagram} of degree~$\Delta$ is a connected
bipartite weighted tree on $n(\Delta)$ ordered point positions,
with directed weighted elevator edges, floor vertices and point
vertices, satisfying the degree and divergence conditions of
\cite[Definition~10.1]{JMPR25}.  The unbounded ends encode the
toric degree~$\Delta$; the ordered point vertices encode either
simple point conditions or adjacent pairs that have been merged
into double point conditions.  We use exactly the JMPR
definition, and recall only the parts of it needed for the
factorisation below.  We write $\cF_\Delta$ for the finite set
of floor diagrams of degree~$\Delta$.
\end{definition}

\begin{definition}[$\bb{A}^1$-weight of an edge,
  {\cite[Notation~4.1]{JMPR25}}]\label{def:mA1}
For a positive integer~$m$,
\begin{equation}\label{eq:mA1}
  m^{\Atrop}
  =\begin{cases}
    \ang{m}+\dfrac{m-1}{2}\,h & m\text{ odd},\\[6pt]
    \dfrac{m}{2}\,h           & m\text{ even}.
  \end{cases}
\end{equation}
\end{definition}

\begin{definition}[Auxiliary type-A factor]\label{def:gamma}
In the JMPR multiplicity, after the floor-diagram regrouping
used below, the type-A part is represented by the following
local contribution.  We denote it by~$\gamma_v(m,d)$.  If the
adjacent elevator has weight~$m$ and the quadratic parameter is
~$d$, then
\begin{equation}\label{eq:JMPR-gamma}
  \gamma_v(m,d)=
  \begin{cases}
    \ang{1}+\dfrac{m-1}{2}\bigl(\ang{2}+\ang{-2d}\bigr)
      +\dfrac{m(m-1)}{2}h
      & m\text{ odd},\\[8pt]
    \dfrac{m^2}{2}h
      & m\text{ even}.
  \end{cases}
\end{equation}
This is the local contribution used at a type-A merge after
that regrouping.  For the cascade it is convenient to factor
this contribution in our fixed normal form as
\[
  \gamma_v(m,d)=\gammaux(m,d)\cdot m^{\Atrop}.
\]
There is no division by $m^{\Atrop}$ in~$\GW(k)$: the
factor~$\gammaux(m,d)$ is the explicit element below, chosen so
that the displayed product equals~\eqref{eq:JMPR-gamma}.  Thus
$\gammaux$ is a rank-$m$ normal form for the grouped type-A
contribution.  In particular, when $m\equiv2\pmod4$, the
factor entering the floor-diagram multiplicity is the product
$\gammaux(m,d)m^{\Atrop}$.
For $m\geq1$ and $d\in k^\times$, set
\begin{equation}\label{eq:gamma}
  \gammaux(m,d)
  =\begin{cases}
    \ang{m}+\dfrac{m-1}{2}\bigl(\ang{2m}+\ang{-2dm}\bigr)
      & m\text{ odd},\\[8pt]
    \dfrac{m}{4}\bigl(\ang{2m}+\ang{-2dm}\bigr)+\dfrac{m}{4}h
      & m\equiv0\pmod4,\\[8pt]
    \ang{1}+\ang{-d}
      +\dfrac{m-2}{4}\bigl(\ang{2m}+\ang{-2dm}\bigr)
      +\dfrac{m-2}{4}h
      & m\equiv2\pmod4.
  \end{cases}
\end{equation}
We use this normal form because
$\gammaux(m,1)=m^{\Atrop}$ and because the product
$\gammaux(m_v,d_v)\cdot m_v^{\Atrop}$ is exactly
\eqref{eq:JMPR-gamma}, the type-A contribution appearing in
\cite[Definition~4.6]{JMPR25} after floor-diagram regrouping;
this is verified in Proposition~\ref{prop:typeA}.
\end{definition}

\begin{remark}[Rank check]\label{rem:rank-check}
Each case has $\rk\gammaux(m,d)=m$:
\begin{itemize}[nosep]
\item $m$ odd: $1+\tfrac{m-1}{2}\cdot2=m$;
\item $m\equiv0\pmod4$:
  $\tfrac{m}{4}\cdot2+\tfrac{m}{4}\cdot2=m$;
\item $m\equiv2\pmod4$:
  $1+1+\tfrac{m-2}{4}\cdot2+\tfrac{m-2}{4}\cdot2=m$.
  \qedhere
\end{itemize}
\end{remark}

\begin{definition}[Vertex-product multiplicity,
  {\cite[Definitions~4.6 and~10.11]{JMPR25}}]
\label{def:vertex-mult}
Let $\cD\in\cF_\Delta$ with merge positions $p_1,\dots,p_s$
and extensions $d_1,\dots,d_s\in k^\times$.  The
\emph{vertex-product multiplicity} of~$\cD$ is
\begin{equation}\label{eq:mult}
  \mult^{\Atrop}(\cD)
  =\prod_i\mult^{\Atrop}(\cT_i)
  \cdot\prod_{v\in V_{(A)}}
     \bigl(\gammaux(m_v,d_v)\cdot m_v^{\Atrop}\bigr)
  \cdot\prod_{j\in R}\beta_j
  \cdot\prod_{e\in E_{\mathrm{bd}}^{\neg A,\neg T}}
     m_e^{\Atrop},
\end{equation}
where:
\begin{itemize}[nosep]
\item the first product runs over twin trees~$\cT_i$
  (\cite[Definition~4.3]{JMPR25}); if a merged point lies on
  a twin tree, its contribution is included only in this
  twin-tree factor;
\item $V_{(A)}$ is the set of type-A merged vertices, with
  $d_v$ the extension attached to the merged point on~$v$;
\item $R$ is the set of merged points which are neither
  type~A nor contained in a twin tree, and
  $\beta_j=\ang{2}+\ang{2d_j}$;
\item the last product runs over bounded edges~$e$ of~$\cD$
  not contained in a twin tree and not adjacent to a merged
  point in~$V_{(A)}$, each contributing $m_e^{\Atrop}$
  (\cite[Definition~10.11]{JMPR25}); this set is denoted
  $E_{\mathrm{bd}}^{\neg A,\neg T}$.
\end{itemize}
\end{definition}

\begin{remark}[Elevator squares]\label{rem:elevator}
In Definition~\ref{def:vertex-mult}, the last product is over
individual bounded edge factors in the floor-diagram/bipartite
presentation.  A geometric elevator of weight~$m$, not at a
type-A merge and not on a twin tree, corresponds to \emph{two}
such ungrouped bounded edges of the bipartite tree of
\cite[Definition~10.1]{JMPR25}, each contributing
$m^{\Atrop}$.  After grouping the two factors associated
with one geometric elevator, the contribution is
$(m^{\Atrop})^2$.
Lemma~\ref{lem:elevator-sq} below identifies this factor.
\end{remark}

\begin{lemma}[Comparison with JMPR multiplicities]
\label{lem:jmpr-comparison}
The product~\eqref{eq:mult} is the floor-diagram
reorganisation of the local multiplicity of
\cite[Definition~4.6]{JMPR25} used in
\cite[Definition~10.11 and Theorem~10.13]{JMPR25}.  More
precisely:
\begin{enumerate}[label=\textup{(\roman*)},nosep]
\item a type-A merged vertex contributes the type-A factor
  appearing in \cite[Definition~4.6]{JMPR25}; after the
  floor-diagram regrouping this is the element
  $\gamma_v(m_v,d_v)=\gammaux(m_v,d_v)m_v^{\Atrop}$;
\item a non-twin-tree merged point outside type~A contributes
  the type-R factor $\beta_j=\ang{2}+\ang{2d_j}$;
\item a connected component of double edges is grouped into
  the twin-tree factor $\mult^{\Atrop}(\cT)$ of
  \cite[Definition~4.3]{JMPR25};
\item every geometric bounded elevator not already absorbed by
  a type-A factor and not lying in a twin tree corresponds to
  two bounded bipartite edges in
  \cite[Definition~10.1]{JMPR25}, hence contributes
  $(m^{\Atrop})^2$ after regrouping;
\item unbounded elevators of weight~$1$ contribute
  $1^{\Atrop}=\ang{1}$, except when they are part of the
  combinatorial data of a twin tree, where they are already
  included in~\eqref{eq:twin-mult}.
\end{enumerate}
Thus the formula used below is JMPR's multiplicity written
with this fixed grouping of factors.
\end{lemma}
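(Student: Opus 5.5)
The statement to prove is Lemma~\ref{lem:jmpr-comparison}: the product~\eqref{eq:mult} is JMPR's multiplicity with a fixed grouping of factors. The plan is a term-by-term bookkeeping argument. Start from the definition of the floor multiplicity in \cite[Definition~10.11]{JMPR25}. There it is written as a product over vertices of the tropical curve encoded by~$\cD$ of the local factors of \cite[Definition~4.6]{JMPR25}, together with the bounded-edge weights of the bipartite tree of \cite[Definition~10.1]{JMPR25}. Then partition this finite product into disjoint blocks. Each block is indexed by exactly one of: a twin tree, a type-A merged vertex, a type-R merged point, a geometric elevator, or an unbounded end. Because $\GW(k)$ is commutative, regrouping a finite product does not change its value. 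So it suffices to show that each block multiplies out to the factor claimed in (i)--(v), and that every JMPR factor lies in exactly one block.

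The blocks are handled as follows, doing the easy ones first.
\begin{itemize}
\item For (iii), take the twin-tree factor of \cite[Definition~4.3]{JMPR25} verbatim, with all double edges and any merged points on it absorbed. This matches the convention in Definition~\ref{def:vertex-mult} that such points are counted only there.
\item For (ii), the type-R local factor in \cite[Definition~4.6]{JMPR25} is read off directly as $\ang{2}+\ang{2d_j}$.
\item For (iv), each geometric elevator of weight~$m$ outside twin trees and type-A merges is subdivided by a point vertex in the bipartite tree. It therefore appears as two bounded bipartite edges, each carrying $m^{\Atrop}$, and the block equals $(m^{\Atrop})^2$.
\item For (v), an unbounded end has weight~$1$, and $1^{\Atrop}=\ang{1}$ by~\eqref{eq:mA1}. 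So these ends contribute trivially, except inside twin trees, where they are already counted.
\end{itemize}

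The substantive block is (i). At a type-A merged vertex, JMPR's local factor combines with the single adjacent elevator edge factor $m_v^{\Atrop}$ that Definition~\ref{def:vertex-mult} excludes from $E_{\mathrm{bd}}^{\neg A,\neg T}$. One must check that this combination equals $\gamma_v(m,d)$ of~\eqref{eq:JMPR-gamma}, and then that $\gamma_v(m,d)=\gammaux(m,d)\,m^{\Atrop}$. For the second identity, expand the product case by case using Lemma~\ref{lem:gw-arith}, namely $h\ang{a}=h$, $h^2=2h$ and $\ang{a}^2=\ang{1}$.
\begin{itemize}
\item For $m$ odd, multiply $\gammaux(m,d)$ by $\ang{m}+\tfrac{m-1}{2}h$. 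The term $\ang{m}\cdot\gammaux(m,d)$ gives $\ang{1}+\tfrac{m-1}{2}(\ang{2}+\ang{-2d})$. The $h$-part gives $\tfrac{m-1}{2}\cdot m\,h$, which is the required $\tfrac{m(m-1)}{2}h$.
\item For $m$ even, $m^{\Atrop}=\tfrac m2 h$ absorbs everything into a multiple of~$h$ equal to $\tfrac m2\rk(\gammaux)\,h=\tfrac{m^2}{2}h$, using Remark~\ref{rem:rank-check}.
\end{itemize}

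The main obstacle is matching the first identity, the JMPR type-A factor plus edge against~\eqref{eq:JMPR-gamma}. This depends on reading off exactly which elevator and bipartite edges JMPR attach to a type-A vertex in \cite[Definition~10.11]{JMPR25}, so that no edge factor is double counted or lost. I would handle it by comparing ranks and the $\ang{1}$-, $\ang{2}$-, $\ang{-2d}$-coefficients separately. I would also cross-check via the specialisation $d=1$, where $\gammaux(m,1)=m^{\Atrop}$ should reproduce two simple points as in \cite[Prop.~5.4]{JMPR25}.
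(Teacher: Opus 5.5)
Your proposal is correct and takes essentially the same route as the paper. Both start from JMPR's local product and use commutativity to regroup it into type-A, type-R, twin-tree, elevator-square and unbounded-end blocks, then identify each block, with the type-A block absorbing the one adjacent elevator factor $m_v^{\Atrop}$. The only difference is that you verify $\gamma_v(m,d)=\gammaux(m,d)\,m^{\Atrop}$ inside this proof, and your shortcut $h\cdot x=\rk(x)\,h$ for even $m$ is valid; the paper instead defers that computation to Proposition~\ref{prop:typeA} and takes the match of the grouped JMPR type-A term with~\eqref{eq:JMPR-gamma} directly from the definition of $\gamma_v$.
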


\begin{proof}
Start with the local product of \cite[Definition~4.6]{JMPR25}
on the bipartite tropical curve before floor shrinking.  The
floor-diagram construction of \cite[Definition~10.11]{JMPR25}
shrinks each floor and keeps the vertical edges as elevators.
Under this shrinking, the factors carrying a double point are
exactly the three classes listed above: type-A factors,
type-R factors~$\beta_j$, and the twin-tree factors attached to
connected double-edge components.

For a type-A merged point, the ordinary edge factor adjacent to
the merge is grouped with the type-A local term; this grouped
contribution is~$\gamma_v(m,d)$, written in our fixed normal
form as $\gammaux(m,d)m^{\Atrop}$.  For a point in a twin tree,
all factors belonging to the double-edge component, including
the adjacent edge and vertex factors used in
\cite[Definition~4.3]{JMPR25}, are kept inside
$\mult^{\Atrop}(\cT)$.  For a type-R merged point, only the
four-valent local factor is grouped into~$\beta_j$; the
ordinary elevator factors not lying in a twin tree remain in the
edge product.

It remains to identify the factors not carrying a merged point.
After floor shrinking, each bounded geometric elevator not
absorbed by a type-A factor and not lying in a twin tree has two
bounded bipartite edges of the same weight~$m$, one at each end
of the elevator.  The corresponding two ordinary vertex/edge
contributions in JMPR's product are therefore the two factors
$m^{\Atrop}$, giving $(m^{\Atrop})^2$.  Unbounded elevators of
weight~$1$ contribute $1^{\Atrop}=\ang{1}$ unless they are part
of the twin-tree data already grouped above.  This gives exactly
the product~\eqref{eq:mult}.
\end{proof}

\subsection{Local building blocks}
\label{sub:blocks}

We recall from \cite[\S3]{JMPR25} the \emph{vertex types}
classifying how a tropical stable map can meet a double point
condition, together with the associated local factors needed to
compute the multiplicity~\eqref{eq:mult}.

\subsubsection{Vertex types}
\label{subsub:vertex-types}

In \cite[Lemma~3.9 and Figure~3]{JMPR25}, the local
appearance of the image $f(\Gamma)$ at a double point is
classified into ten cases~(A)--(J), schematically adapted in
Figure~\ref{fig:vertex-types}:

\begin{figure}[ht]
\centering
\input{figures/combtypes_fatpoint_2}
\caption{Local building blocks at a double point,
schematically adapted from \cite[Figure~3]{JMPR25}.  The large
filled dot marks the double point; a double line represents a
\emph{double edge} (two edges of~$\Gamma$ of the same primitive
direction mapped onto the same image).}
\label{fig:vertex-types}
\end{figure}

For a merged point not contained in a twin tree, the
corresponding factor in~\eqref{eq:mult} depends on the
vertex type.  For this paper only two non-twin-tree cases
play a direct role:

\begin{itemize}[leftmargin=2em,nosep]
\item \emph{Type~A merge} (case~(A)):\; the double point
  lies on a $3$-valent vertex~$v$ whose \emph{adjacent
  elevator edge} has weight~$m_v$; the two other edges have
  weight~$1$ in vertically stretched position.  The local
  factor is
  $\gammaux(m_v,d_j)\cdot m_v^{\Atrop}$,
  with $\gammaux$ as in~\eqref{eq:gamma}.
\item \emph{Type~R merge}:\; the double point lies outside
  the type-A vertices and outside any twin tree (the index
  set~$R$ of~\cite[Def.~4.6(3)]{JMPR25}).  The local factor
  is $\beta_j=\ang{2}+\ang{2d_j}$.
\end{itemize}

The remaining local pictures either contribute through the
type-R factor~$\beta_j$ above, or, when they form identical
double-edge components, through the \emph{twin-tree} factors
discussed below.

\subsubsection{Twin trees}
\label{subsub:twin-trees}

A \emph{twin tree}~$\cT$ of $(\Gamma,f)$ is a connected
component of double edges in~$f(\Gamma)$ whose two copies
in~$\Gamma$ are identical parametrisations of the same
image, attached to the rest of the curve at a unique vertex
of type~(C) or a type-(C)-part of~(G)
(\cite[Def.~3.14]{JMPR25}).  Three typical examples from
\cite[Figure~5]{JMPR25} are schematically adapted in
Figure~\ref{fig:twin-trees} below.

\begin{figure}[ht]
\centering
\input{figures/twin_trees}
\caption{Examples of twin trees with their multiplicities,
schematically adapted from \cite[Figure~5]{JMPR25}.}
\label{fig:twin-trees}
\end{figure}

For a twin tree~$\cT$ rooted at an edge of weight
$m_\mathrm{root}$, carrying $t$~double points $q_1,\dots,q_t$
on its twin edges, and $m_\circ=m_\mathrm{root}+
\#\{\text{unbounded twin elevators of }\cT\}$, the twin-tree
multiplicity (\cite[Def.~4.3 and Def.~10.11]{JMPR25}) is
\begin{equation}\label{eq:twin-mult}
  \mult^{\Atrop}(\cT)
  =\prod_{(e,e')\in E_{\mathrm{tw}}(\cT)}
   \mult^{\Atrop}(e,e')\cdot
   \ang{2^{t-1}}\cdot
   \sum_{\substack{I\subseteq\{1,\dots,t\}\\
                   |I|\equiv m_\circ\!\!\pmod 2}}
   \Bigl\langle\prod_{i\in I}d_i\Bigr\rangle,
\end{equation}
where $E_{\mathrm{tw}}(\cT)$ denotes the twin edges/elevators
that occur in JMPR's twin-tree product.  For each such
$(e,e')$ of weight~$m$ carrying a double-point condition with
quadratic parameter~$d$, the twin-edge multiplicity is
\begin{equation}\label{eq:twin-edge-mult}
  \mult^{\Atrop}(e,e')=
  \begin{cases}
    \ang{1}+\dfrac{m^2-1}{2}\bigl(\ang{1}+\ang{-d}\bigr)
    +\dfrac{m^4-m^2}{2}h & m\text{ odd},\\[8pt]
    \dfrac{m^2}{2}\bigl(\ang{1}+\ang{-d}\bigr)
    +\dfrac{m^4-m^2}{2}h & m\text{ even}.
  \end{cases}
\end{equation}

\subsubsection{The full multiplicity formula}
\label{subsub:full-multiplicity}

Combining everything, \cite[Def.~4.6]{JMPR25} reads:
\begin{equation}\label{eq:full-mult}
  \mult^{\Atrop}(\Gamma,f)
  =\prod_{i=1}^{T}\mult^{\Atrop}(\cT_i)\cdot
   \prod_{v\in V_{(A)}}\gamma_v\cdot
   \prod_{i\in R}\beta_i\cdot
   \prod_{v}(m_v)^{\Atrop}\cdot
   \prod_{v}\bigl((m_v)^{\Atrop}(m_v')^{\Atrop}\bigr),
\end{equation}
where: the first product runs over twin trees; the second
over type-A merge vertices with local factor
$\gamma_v=\gammaux(m_v,d_v)\cdot m_v^{\Atrop}$; the third over
type-R indices; the fourth over simple $3$-valent vertices
(not at type~A, not vertically adjacent to a type-A vertex,
and not on a twin tree); the fifth over
$4$-valent vertices with a double edge that do not lie on a
twin tree, with $m_v,m_v'$ the weights of the two double
edges.  Our combined factor~$\gammaux(m_v,d_v)\cdot
m_v^{\Atrop}$ in the ``type-A'' bullet above absorbs the
adjacent elevator factor into the vertex factor, following
\cite[\S10]{JMPR25}'s floor-diagram reformulation.

\subsubsection{Example}
\label{subsub:mini-example}

For the quartic curve on the left of \cite[Figure~6]{JMPR25}
(five double and one simple point), with $d_1,d_2$ on a
twin tree rooted at a weight-$2$ elevator and $d_4$ a type-R
merge, the product of their local factors is
\[
  \mult^{\Atrop}(\cT)\cdot\beta_4
  =\bigl(\ang{2d_1}+\ang{2d_2}\bigr)\cdot
   \bigl(\ang{2}+\ang{2d_4}\bigr)
\]
(the first factor is the twin-tree multiplicity
from~\eqref{eq:twin-mult} with $t=2$, $m_\circ=1$, reducing
to the $|I|=1$ terms of the sum).  Analogously, the middle
diagram yields $\gamma_2=\ang{1}+\ang{2}+\ang{-2d_2}+3h$
(type-A merge with $m_v=3$).  These expressions follow directly
from~\eqref{eq:gamma}, \eqref{eq:twin-mult}, and
\eqref{eq:full-mult}.

\begin{definition}[Floor-diagram count]\label{def:count}
For a fixed admissible unordered $(r,s)$ and ordered tuple
$(d_1,\dots,d_s)$, the \emph{merge positions} record which
of the $r+2s$ abstract positions are paired into
$s$~double points.  For a merge configuration~$\pi$, let
$\cS_\Delta(r,s;\pi)$ be the corresponding finite set of floor
diagrams compatible with~$\pi$.  The \emph{floor-diagram count
with merge configuration~$\pi$} is
\[
  \Nfloor_{\Delta,\pi}(r,(d_1,\dots,d_s))
  =\sum_{\cD\in\cS_\Delta(r,s;\pi)}
    \mult^{\Atrop}(\cD)\in\GW(k).
\]
When the merge configuration is fixed or irrelevant, we suppress
the~$\pi$ from the subscript.  For two choices~$\pi,\pi'$ of merge
position we write
\[
  \Delta N=\Nfloor_{\Delta,\pi}
  -\Nfloor_{\Delta,\pi'}.
\]
\end{definition}

\begin{lemma}[Merge-position graph and dissolution]
\label{lem:merge-graph}
Encode a merge configuration by a subset
$P=\{p_1<\cdots<p_s\}\subset\{1,\dots,r+2s-1\}$ with
$p_{i+1}\geq p_i+2$, where $p_i$ records the merged adjacent
pair $(p_i,p_i+1)$.  The graph whose edges are unit shifts
$p_i\mapsto p_i\pm1$ preserving this condition is connected.
Moreover, if two configurations differ by a unit shift of a
labelled merged pair and a different labelled pair is
dissolved, the two resulting configurations again differ by
the same unit shift in the problem with $s-1$ double points
and $r+2$ simple points.  If the shifted pair itself is
dissolved, the two resulting configurations are identical.
\end{lemma}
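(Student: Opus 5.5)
The statement has two parts: connectivity of the unit-shift graph on admissible merge configurations, and compatibility of unit shifts with dissolution. Both are purely combinatorial, so I will argue directly with the encoding $P=\{p_1<\cdots<p_s\}$ and the constraint $p_{i+1}\ge p_i+2$ inside $\{1,\dots,n-1\}$, where $n=r+2s$.

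\emph{Connectivity.} I will show that every admissible $P$ connects to the \emph{leftmost} configuration $P_0=\{1,3,5,\dots,2s-1\}$; connectivity of the graph then follows.

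1. Argue by induction on $\sum_i p_i-\sum_i(2i-1)\ge 0$. This quantity is non-negative because admissibility forces $p_i\ge 2i-1$.
2. If $P\neq P_0$, let $i$ be the smallest index with $p_i>2i-1$.
3. By minimality, either $i=1$, or $p_{i-1}=2i-3$. In both cases $p_i-1\ge 2i-1\ge p_{i-1}+2$, and also $p_i-1\ge 1$.
4. Hence the unit shift $p_i\mapsto p_i-1$ preserves the gap condition with the left neighbour. The gap with $p_{i+1}$ only grows.
5. The shift strictly decreases the potential, so the induction closes.

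The only care needed is the boundary case $i=1$, where there is no left neighbour and the constraint is just $p_1-1\ge1$.

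\emph{Dissolution compatibility.} Let $P$ and $P'$ differ by shifting the labelled pair $p_a$ to $p_a\pm1$, with all other $p_i$ equal. Dissolving the pair with label $b$ means deleting $p_b$ from the set: the two positions $p_b,p_b+1$ become simple points, and all positions keep their indices. This is the position-preserving dissolution of the statement. The total number of positions stays $n=(r+2)+2(s-1)$.

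1. If $b\neq a$, both $P\setminus\{p_b\}$ and $P'\setminus\{p_b\}$ remain admissible, since deleting an element never violates the gap condition. They differ exactly in the element $p_a$ versus $p_a\pm1$, which is the same unit shift in the problem with $s-1$ double points and $r+2$ simple points.
2. If $b=a$, then $P\setminus\{p_a\}=P'\setminus\{p_a\pm1\}$, so the two resulting configurations are identical.

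The main subtlety is not computational. It is making precise that dissolution keeps absolute positions rather than re-indexing. If positions were re-indexed, the shifted element could change its numerical label, although the shift would still be a unit shift. I would state this convention explicitly, referring to the position-preserving dissolution of Proposition~\ref{prop:labeled-dissolution} and to the split specialisation of \cite[Prop.~5.4]{JMPR25}. With that convention fixed, both claims are immediate set-theoretic checks.
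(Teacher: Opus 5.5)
Your proof is correct and follows essentially the same route as the paper. The paper also connects every configuration to $\{1,3,\dots,2s-1\}$ by sliding the pairs leftward one at a time; your potential-function induction on the smallest out-of-place index is a more formal bookkeeping of that greedy slide. The paper likewise treats dissolution as deleting the labelled element of $P$ while keeping the ambient ordered set of $r+2s=(r+2)+2(s-1)$ positions fixed.
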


\begin{proof}
For connectivity, slide the leftmost pair until it starts at
position~$1$, then slide the second pair until it starts at
position~$3$, and continue.  At the $i$-th step the earlier
pairs occupy starts $1,3,\dots,2i-3$; if $p_i>2i-1$, then
replacing $p_i$ by $p_i-1$ still leaves distance at least~$2$
from the previous start and only increases the distance from
the next start.  Thus every configuration is connected to
the canonical one $\{1,3,\dots,2s-1\}$.

Dissolving a labelled pair means deleting that pair from the
matching and declaring its two positions simple.  This does
not change the ambient ordered set of $r+2s=(r+2)+2(s-1)$
positions.  Hence a unit shift of any other labelled pair
remains the same unit shift after dissolution, while
dissolving the shifted pair leaves the same remaining
matching on both sides.
\end{proof}

\section{Algebraic properties of the type-A product}
\label{sec:typeA}

This section computes $\gammaux(m,d)\cdot m^{\Atrop}$ and
extracts its $\ang{d}$-dependence.

\begin{proposition}[Type-A product]\label{prop:typeA}\mbox{}
\begin{enumerate}[label=\textup{(\alph*)}]
\item\label{it:even}
  For $m$ even,
  $\gammaux(m,d)\cdot m^{\Atrop}=\dfrac{m^2}{2}h$, independent
  of~$d$.
\item\label{it:odd}
  For $m$ odd,
  \begin{equation}\label{eq:odd-product}
    \gammaux(m,d)\cdot m^{\Atrop}
    =\ang{1}+\frac{m-1}{2}\bigl(\ang{2}+\ang{-2}\ang{d}\bigr)
     +\frac{m(m-1)}{2}h.
  \end{equation}
  In particular $\gammaux(1,d)\cdot 1^{\Atrop}=\ang{1}$,
  and for $m\geq3$ the parameter~$d$ enters only through
  $\ang{d}$, with coefficient $\frac{m-1}{2}\ang{-2}$.
\end{enumerate}
\end{proposition}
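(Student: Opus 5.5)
The plan is a direct computation in $\GW(k)$ from the normal form \eqref{eq:gamma} and the weight \eqref{eq:mA1}, using only the relations \eqref{eq:gw-rels} and Lemma~\ref{lem:gw-arith}. I will split into three cases: $m$ odd, $m\equiv0\pmod4$, and $m\equiv2\pmod4$. In each case I expand the product term by term. I then simplify with three rules: $\ang{a}\ang{b}=\ang{ab}$, the absorption $h\ang{a}=h$, and $h^2=2h$.

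\textbf{Odd $m$.} Write $c=\frac{m-1}{2}$, so that $m^{\Atrop}=\ang{m}+ch$. Multiplying $\gammaux(m,d)=\ang{m}+c(\ang{2m}+\ang{-2dm})$ by $\ang{m}$ gives $\ang{m^2}+c(\ang{2m^2}+\ang{-2dm^2})=\ang{1}+c(\ang{2}+\ang{-2d})$, using square-class invariance. Multiplying $\gammaux(m,d)$ by $ch$ gives $c\cdot\rk(\gammaux(m,d))\,h=cmh$, by absorption and Remark~\ref{rem:rank-check}. Since $cm=\frac{m(m-1)}{2}$, this yields \eqref{eq:odd-product}, after writing $\ang{-2d}=\ang{-2}\ang{d}$. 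The two "in particular" claims follow at once. For $m=1$ we have $c=0$, so the product is $\ang{1}$. For $m\ge3$, the variable $d$ appears only in the single term $c\ang{-2}\ang{d}$.

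\textbf{Even $m$.} Here $m^{\Atrop}=\frac m2 h$ is a multiple of $h$. By absorption, $\gammaux(m,d)\cdot\frac m2h=\frac m2\,\rk(\gammaux(m,d))\,h$. By Remark~\ref{rem:rank-check} the rank is $m$ in both even subcases, so the product is $\frac{m^2}{2}h$, which does not depend on $d$. The only care needed is the $m\equiv2\pmod4$ case. There the summands $\ang{1}+\ang{-d}$ must each be multiplied by $h$, giving $2h$. More generally, every rank-one symbol in $\gammaux$ becomes $h$ after multiplication by $h$, and every copy of $h$ becomes $2h$ by Lemma~\ref{lem:gw-arith}\ref{it:hsq}. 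This is precisely the statement $h\cdot X=\rk(X)\,h$, which I will record once as a preliminary identity.

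\textbf{Main obstacle.} There is no real conceptual obstacle. The step needing care is justifying $h\cdot X=\rk(X)h$ for $X\in\GW(k)$. I will derive it by writing $X$ as a $\bb{Z}$-combination of symbols $\ang{a}$ and applying Lemma~\ref{lem:gw-arith}\ref{it:absorb} to each. The other point to check is the bookkeeping that matches \eqref{eq:odd-product} with \eqref{eq:JMPR-gamma}. This confirms the claim made in Definition~\ref{def:gamma} that $\gammaux(m,d)\,m^{\Atrop}=\gamma_v(m,d)$.
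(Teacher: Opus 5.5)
Your proposal is correct and follows the paper's route: a direct three-case expansion using $\ang{a}\ang{b}=\ang{ab}$, square-class invariance, $h\ang{a}=h$ and $h^2=2h$. The only difference is cosmetic. You package absorption once as $hX=\rk(X)\,h$ and combine it with the rank check, which gives the odd-case $h$-coefficient $cm=\frac{m(m-1)}{2}$ and the even case $\frac m2\cdot m\,h$ at once; the paper instead expands term by term and factors $c+2c^2=c(1+2c)$.
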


\begin{proof}
\ref{it:even} \textbf{Case $m\equiv0\pmod4$.}
With $m^{\Atrop}=\tfrac{m}{2}h$ and
\(\gammaux(m,d)=\tfrac{m}{4}(\ang{2m}+\ang{-2dm})+\tfrac{m}{4}h\),
\begin{align*}
  \gammaux(m,d)\cdot m^{\Atrop}
  &=\Bigl[\tfrac{m}{4}(\ang{2m}+\ang{-2dm})
     +\tfrac{m}{4}h\Bigr]\tfrac{m}{2}h\\
  &=\tfrac{m^2}{8}\bigl[
     \underbrace{\ang{2m}h}_{h}
     +\underbrace{\ang{-2dm}h}_{h}
     +\underbrace{h^2}_{2h}\bigr]
   =\tfrac{m^2}{8}\cdot4h=\tfrac{m^2}{2}h,
\end{align*}
using Lemma~\ref{lem:gw-arith}~\ref{it:absorb}
and~\ref{it:hsq}.

\textbf{Case $m\equiv2\pmod4$.}
Now $m^{\Atrop}=\tfrac{m}{2}h$ and
\(\gammaux(m,d)=\ang{1}+\ang{-d}
  +\tfrac{m-2}{4}(\ang{2m}+\ang{-2dm})+\tfrac{m-2}{4}h\).
Multiplying by $\tfrac{m}{2}h$ and using
$\ang{a}h=h$:
\begin{align*}
  \gammaux(m,d)\cdot m^{\Atrop}
  &=\tfrac{m}{2}\bigl[2h+\tfrac{m-2}{4}\cdot2h
     +\tfrac{m-2}{4}\cdot2h\bigr]
   =\tfrac{m}{2}\bigl[2h+(m-2)h\bigr]
   =\tfrac{m^2}{2}h.
\end{align*}

\medskip
\ref{it:odd} Set $c=\frac{m-1}{2}$ so that
\(\gammaux(m,d)=\ang{m}+c\ang{2m}+c\ang{-2dm}\) and
$m^{\Atrop}=\ang{m}+ch$.  Expand:
\begin{align*}
  \gammaux(m,d)\cdot m^{\Atrop}
  &=\ang{m}^2+c\ang{m}h+c\ang{2m}\ang{m}+c^2\ang{2m}h
   +c\ang{-2dm}\ang{m}+c^2\ang{-2dm}h.
\end{align*}
Using $\ang{m^2}=\ang{1}$, $\ang{2m^2}=\ang{2}$,
$\ang{-2dm^2}=\ang{-2d}=\ang{-2}\ang{d}$ and $\ang{a}h=h$,
\begin{align*}
  \gammaux(m,d)\cdot m^{\Atrop}
  =\ang{1}+c(\ang{2}+\ang{-2}\ang{d})+(c+2c^2)h.
\end{align*}
Now $c+2c^2=c(1+2c)=\frac{m-1}{2}\cdot m=\frac{m(m-1)}{2}$,
which gives~\eqref{eq:odd-product}.  For $m=1$, $c=0$, so
$\gammaux(1,d)\cdot 1^{\Atrop}=\ang{1}$.
\end{proof}

\begin{corollary}[$\ang{d}$-coefficient]\label{cor:d-coeff}
The $\ang{d}$-coefficient of
$\gammaux(m,d)\cdot m^{\Atrop}$ is
\[
  \begin{cases}
    \dfrac{m-1}{2}\ang{-2} & m\geq3\text{ odd},\\[6pt]
    0                       & \text{otherwise.}
  \end{cases}
\]
Since $\ang{-2}=h-\ang{2}$, it lies in $\bb{Z}\ang{2}+\bb{Z}h$.
\end{corollary}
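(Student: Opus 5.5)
This is a direct reading of Proposition~\ref{prop:typeA}, so I will not redo any product computation and will only extract coefficients. The first step is to say what ``the $\ang{d}$-coefficient'' means. Take the type-A product viewed as a multi-affine expression in the symbol $\ang{d}$, i.e.\ as an element of $R[x]/(x^2-1)$ with $x=\ang{d}$, over the universal coefficient ring $R=\GWuniv$, in the sense of Lemma~\ref{lem:multiaffine-cascade}. The coefficient is then the unique $b\in R$ with product $=a+b\,x$. The expansions in Proposition~\ref{prop:typeA} are written exactly in this form, with coefficients built from $\ang{1},\ang{2},\ang{-2},h$. So it suffices to read off $b$ in each case, and the uniqueness of the expansion makes $b$ well defined.

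For $m$ even, Proposition~\ref{prop:typeA}\ref{it:even} gives $\frac{m^2}{2}h$, which has no $x$-term, so $b=0$. For $m$ odd, \eqref{eq:odd-product} can be written as
\[
\Bigl(\ang{1}+\tfrac{m-1}{2}\ang{2}+\tfrac{m(m-1)}{2}h\Bigr)+\tfrac{m-1}{2}\ang{-2}\,x.
\]
This gives $b=\frac{m-1}{2}\ang{-2}$. When $m=1$ this vanishes, consistent with $\gammaux(1,d)\cdot1^{\Atrop}=\ang{1}$, and that case goes into ``otherwise''. Note that the normal form matters here: a term such as $\ang{-2d}$ has to be split as $\ang{-2}\ang{d}$ before reading off the coefficient, and this is exactly how \eqref{eq:odd-product} is presented.

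The final sentence follows from the relation $\ang{2}+\ang{-2}=h$ imposed in $\GWuniv$ (Definition~\ref{def:Guniv}, Lemma~\ref{lem:GWuniv-props}\ref{it:GWuniv-basis}). It gives $\frac{m-1}{2}\ang{-2}=\frac{m-1}{2}h-\frac{m-1}{2}\ang{2}\in\bb{Z}\ang{2}+\bb{Z}h$. By Lemma~\ref{lem:field-indep}, the integer coordinates $(-\frac{m-1}{2},\frac{m-1}{2})$ are uniquely determined, and they map compatibly under every $\varphi_k$.

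The only point needing care, and not a real obstacle, is well-definedness. In an individual $\GW(k)$ the ``coefficient of $\ang{d}$'' is not intrinsic; for example, when $d$ is a square, $\ang{d}=\ang{1}$. For this reason the coefficient must be taken in the formal multi-affine/universal sense and not inside $\GW(k)$.
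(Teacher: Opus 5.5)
Your proposal is correct and follows the paper's own argument. The even case of Proposition~\ref{prop:typeA} has no $\ang{d}$-term, the only $d$-dependent term of \eqref{eq:odd-product} is $\frac{m-1}{2}\ang{-2}\ang{d}$ (which vanishes at $m=1$), and $\ang{-2}=h-\ang{2}$ gives the final claim.

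One refinement to your well-definedness remark: the coefficient is fixed by the normal form displayed in Proposition~\ref{prop:typeA}, whose derivation already uses $h\ang{a}=h$. It is not fixed by uniqueness of expansion in $\GWuniv[x]/(x^2-1)$ alone, because that ring does not impose $hx=h$ and the type-A product is not literally computed there.
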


\begin{proof}
Immediate from Proposition~\ref{prop:typeA}: the even case
has no $\ang{d}$-term; in~\eqref{eq:odd-product} the only
$d$-dependent term is $\frac{m-1}{2}\ang{-2}\ang{d}$, zero
when $m=1$.
\end{proof}

\begin{lemma}[Elevator squares]\label{lem:elevator-sq}
For every $m\geq1$,
\begin{equation}\label{eq:sq}
  (m^{\Atrop})^2
  =\begin{cases}
    \ang{1}+\dfrac{m^2-1}{2}h & m\text{ odd},\\[6pt]
    \dfrac{m^2}{2}h           & m\text{ even}.
  \end{cases}
\end{equation}
In particular $(m^{\Atrop})^2\in\bb{Z}\ang{1}+\bb{Z}h$.
\end{lemma}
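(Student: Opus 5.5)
The plan is a direct computation from Definition~\ref{def:mA1}, split by the parity of~$m$, using only the relations of Notation~\ref{not:gw} and Lemma~\ref{lem:gw-arith}.

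\textbf{Case $m$ even.} Here $m^{\Atrop}=\tfrac{m}{2}h$. Squaring and applying Lemma~\ref{lem:gw-arith}\ref{it:hsq} ($h^2=2h$) gives
\[
(m^{\Atrop})^2=\tfrac{m^2}{4}h^2=\tfrac{m^2}{4}\cdot2h=\tfrac{m^2}{2}h .
\]

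\textbf{Case $m$ odd.} Set $c=\tfrac{m-1}{2}$, so that $m^{\Atrop}=\ang{m}+ch$. Expand the square:
\[
(\ang{m}+ch)^2=\ang{m}^2+2c\ang{m}h+c^2h^2 .
\]
Simplify each term as follows:
\begin{itemize}[nosep]
\item $\ang{m}^2=\ang{m^2}=\ang{1}$, by the multiplicative relations~\eqref{eq:gw-rels};
\item $\ang{m}h=h$, by Lemma~\ref{lem:gw-arith}\ref{it:absorb};
\item $h^2=2h$, by Lemma~\ref{lem:gw-arith}\ref{it:hsq}.
\end{itemize}
This yields $\ang{1}+(2c+2c^2)h$. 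The only arithmetic to verify is
\[
2c(1+c)=(m-1)\cdot\tfrac{m+1}{2}=\tfrac{m^2-1}{2},
\]
which gives~\eqref{eq:sq}.

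\textbf{Final assertion.} It follows by inspection: in both cases the result is an integer combination of $\ang{1}$ and~$h$. The coefficients are integers because $m^2-1$ is divisible by~$2$ when $m$ is odd, and $m^2$ is even when $m$ is even.

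There is no real obstacle here. The only point needing care is to use $\ang{m^2}=\ang{1}$, rather than leaving a stray~$\ang{m}$, so that the result lands in $\bb{Z}\ang{1}+\bb{Z}h$ independently of the square class of~$m$. As a consistency check, the rank is $m^2$ in both cases.
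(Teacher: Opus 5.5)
Your proposal is correct and matches the paper's proof: the same parity split, the same expansion $(\ang{m}+ch)^2=\ang{m^2}+2c\ang{m}h+c^2h^2$, and the same reductions $\ang{m^2}=\ang{1}$, $\ang{m}h=h$, $h^2=2h$, giving $2c+2c^2=\tfrac{m^2-1}{2}$. Your integrality remark and rank check are harmless additions.
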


\begin{proof}
For $m$ even, $(\tfrac{m}{2}h)^2=\tfrac{m^2}{4}h^2
=\tfrac{m^2}{2}h$.
For $m$ odd with $c=\tfrac{m-1}{2}$,
\[
  (\ang{m}+ch)^2=\ang{m^2}+2c\ang{m}h+c^2h^2
  =\ang{1}+(2c+2c^2)h=\ang{1}+\tfrac{m^2-1}{2}h.
  \qedhere
\]
\end{proof}

\begin{corollary}[Universal square-field product]
\label{cor:univ-square-product}
After expanding the two sides by
Proposition~\ref{prop:typeA} and Lemma~\ref{lem:elevator-sq},
the identity
\[
  \gammaux(m,1)\cdot m^{\Atrop}=(m^{\Atrop})^2
\]
holds in~$\GWuniv$ for every $m\geq1$.
\end{corollary}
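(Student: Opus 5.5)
The plan is a direct comparison of the two closed forms already computed, done separately for even and odd $m$. Both sides will be written in the basis $\{\ang{1},h,\ang{2}\}$ of $\GWuniv$ from Lemma~\ref{lem:GWuniv-props}. The identities used along the way are $\ang{a}h=h$, $h^2=2h$, and $\ang{-2}=h-\ang{2}$. The first two hold in $\GWuniv$ by the same one-line arguments as in Lemma~\ref{lem:gw-arith}: $\ang{a}h=\ang{a}+\ang{-a}$ equals $h$ for $a=\pm1$ by definition, and for $a=\pm2$ by the defining relation. Since $m$ is an integer, the symbols $\ang{m}$ and $\ang{2m}$ need interpreting in $\GWuniv$. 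I will avoid this by working only with the expanded forms stated in Proposition~\ref{prop:typeA} and Lemma~\ref{lem:elevator-sq}, which involve only $\ang{1}$, $\ang{2}$, $\ang{-2}$ and $h$, as the statement itself prescribes.

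\textbf{Even $m$.} Proposition~\ref{prop:typeA}\ref{it:even} gives $\gammaux(m,1)\cdot m^{\Atrop}=\tfrac{m^2}{2}h$, independently of $d$. Lemma~\ref{lem:elevator-sq} gives $(m^{\Atrop})^2=\tfrac{m^2}{2}h$. The two expressions coincide, so there is nothing further to check.

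\textbf{Odd $m$.} Put $c=\tfrac{m-1}{2}$ and set $d=1$ in \eqref{eq:odd-product}. This gives
\[
  \ang{1}+c\bigl(\ang{2}+\ang{-2}\bigr)+\tfrac{m(m-1)}{2}h .
\]
The hyperbolic relation defining $\GWuniv$ turns $\ang{2}+\ang{-2}$ into $h$, so the expression becomes $\ang{1}+\bigl(c+\tfrac{m(m-1)}{2}\bigr)h$. It remains to compare coefficients of $h$:
\[
  c+\tfrac{m(m-1)}{2}=\tfrac{(m-1)(1+m)}{2}=\tfrac{m^2-1}{2},
\]
which is exactly the coefficient in Lemma~\ref{lem:elevator-sq}. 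The case $m=1$ is included, with both sides equal to $\ang{1}$.

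\textbf{Main obstacle.} The only step that really uses the structure of $\GWuniv$ is the substitution $\ang{2}+\ang{-2}=h$, which is precisely the relation imposed in Definition~\ref{def:Guniv}. The main point of care is therefore to check that every identity used in Proposition~\ref{prop:typeA} and Lemma~\ref{lem:elevator-sq} stays valid in $\GWuniv$, and not only in $\GW(k)$. Those arguments rely only on $\ang{a}^2=\ang{1}$, on $\ang{a}h=h$, and on the hyperbolic relation, all of which hold in $\GWuniv$. Once this is confirmed, the two sides agree as elements of $\GWuniv$, and hence agree after applying $\varphi_k$ for every field $k$.
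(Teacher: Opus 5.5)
Your proof is correct and is essentially the paper's own argument. For even $m$ both sides equal $\frac{m^2}{2}h$, and for odd $m$ you set $\ang{d}=\ang{1}$ in \eqref{eq:odd-product}, apply the defining relation $\ang{2}+\ang{-2}=h$ of $\GWuniv$, and match the resulting $h$-coefficient $\frac{m^2-1}{2}$ with Lemma~\ref{lem:elevator-sq}.

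One small quibble concerns your closing paragraph, which suggests that the derivation of Proposition~\ref{prop:typeA} itself runs inside $\GWuniv$. That derivation uses the symbols $\ang{m}$ and $\ang{2m}$, which are not elements of $\GWuniv$. As you correctly said earlier, the point is only that the expanded forms are taken as the lifts, which is exactly what the statement prescribes.
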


\begin{proof}
For $m$ even, both sides are $\frac{m^2}{2}h$.  For $m$ odd,
Proposition~\ref{prop:typeA}\ref{it:odd} at $\ang{d}=\ang{1}$
gives
\[
  \gammaux(m,1)\cdot m^{\Atrop}
  =\ang{1}+\tfrac{m-1}{2}\bigl(\ang{2}+\ang{-2}\bigr)
  +\tfrac{m(m-1)}{2}h.
\]
Using the defining relation $\ang{2}+\ang{-2}=h$ in
$\GWuniv$, this becomes
$\ang{1}+\frac{m^2-1}{2}h$, which is
$(m^{\Atrop})^2$ by Lemma~\ref{lem:elevator-sq}.
\end{proof}

\begin{corollary}[Module closure]\label{cor:module}
$\bb{Z}\ang{1}+\bb{Z}h$ is a subring of $\GW(k)$, and
\[
  (\bb{Z}\ang{1}+\bb{Z}h)\cdot(\bb{Z}\ang{\alpha}+\bb{Z}h)
  \subset\bb{Z}\ang{\alpha}+\bb{Z}h
\]
for every $\alpha\in k^\times$.
\end{corollary}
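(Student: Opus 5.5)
The plan is a direct check on $\bb{Z}$-module generators, using only the multiplicative relations~\eqref{eq:gw-rels} and Lemma~\ref{lem:gw-arith}. Both sets in the statement are $\bb{Z}$-submodules of $\GW(k)$ by definition. The product of two $\bb{Z}$-submodules is spanned by the products of their generators. It therefore suffices to verify that every such product of generators lands in the target module.

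For the subring claim, we take generators $\ang{1}$ and $h$ on both sides and compute:
\[
  \ang{1}\cdot\ang{1}=\ang{1},\qquad
  \ang{1}\cdot h=h,\qquad
  h\cdot h=2h,
\]
where the last identity is Lemma~\ref{lem:gw-arith}\ref{it:hsq}. All three products lie in $\bb{Z}\ang{1}+\bb{Z}h$. This submodule contains the unit $\ang{1}$, so together with closure under multiplication it is a subring.

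For the module claim, fix $\alpha\in k^\times$ and multiply the generators $\{\ang{1},h\}$ against the generators $\{\ang{\alpha},h\}$:
\[
  \ang{1}\ang{\alpha}=\ang{\alpha},\qquad
  \ang{1}h=h,\qquad
  h\ang{\alpha}=h,\qquad
  h\cdot h=2h.
\]
The third identity is Lemma~\ref{lem:gw-arith}\ref{it:absorb} (absorption). Every product lies in $\bb{Z}\ang{\alpha}+\bb{Z}h$. Bilinearity of multiplication then extends this to arbitrary $\bb{Z}$-linear combinations, which gives the stated inclusion.

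There is no genuine obstacle here; the only step that does any work is the absorption identity $h\ang{a}=h$, and it has already been established in Lemma~\ref{lem:gw-arith}. The same computation goes through verbatim in $\GWuniv$ for $\alpha\in\{1,2\}$, because the relations $h\ang{\alpha}=h$ and $h^2=2h$ also hold there: each follows from $\ang{\pm1}^2=\ang{\pm2}^2=\ang{1}$ in $\bb{Z}[V_4]$ together with the hyperbolic relation $\ang{2}+\ang{-2}=h$. We will note this explicitly, since it is the version needed later when the coefficients are tracked in $\GWuniv$ and transported via Lemma~\ref{lem:field-indep}.
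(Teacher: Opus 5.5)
Your proposal is correct and follows the paper's proof: both reduce to checking the generator products $\ang{1}\ang{\alpha}=\ang{\alpha}$, $\ang{1}h=h$, $\ang{\alpha}h=h$ and $h^2=2h$ using Lemma~\ref{lem:gw-arith}. Your remark that the same computation holds in $\GWuniv$ for $\alpha\in\{1,2\}$ is also correct, though the statement itself does not need it.
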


\begin{proof}
$\ang{1}\ang{\alpha}=\ang{\alpha}$,
$\ang{1}h=h$,
$\ang{\alpha}h=h$
(Lemma~\ref{lem:gw-arith}~\ref{it:absorb}),
$h^2=2h$.
\end{proof}

\section{Key external identities}\label{sec:identities}

\begin{proposition}[Square-field identity]
\label{prop:square-field}
For the auxiliary factor of Definition~\ref{def:gamma},
$\gammaux(m,1)=m^{\Atrop}$ for every $m\geq1$.
This is the local form of the dissolution phenomenon behind
\cite[Proposition~5.4]{JMPR25}, which states at the count
level that a square extension $d_s\in(k^\times)^2$ reduces the
double point to two simple points.
\end{proposition}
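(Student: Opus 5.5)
The plan is a direct case check against Definition~\ref{def:gamma} and Definition~\ref{def:mA1}, following the three cases of \eqref{eq:gamma}. The one identity that does all the work is Lemma~\ref{lem:gw-arith}\ref{it:hyp}, $\ang{a}+\ang{-a}=h$, together with the absorption rule $\ang{a}h=h$ of Lemma~\ref{lem:gw-arith}\ref{it:absorb}. At $d=1$ every paired term $\ang{2m}+\ang{-2dm}$ becomes $\ang{2m}+\ang{-2m}$. By Lemma~\ref{lem:gw-arith}\ref{it:hyp} with $a=2m$, this equals $h$. The rest is bookkeeping of coefficients.

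\emph{Case $m$ odd.} With $c=\frac{m-1}{2}$, I will rewrite $\gammaux(m,1)=\ang{m}+c(\ang{2m}+\ang{-2m})=\ang{m}+ch$, which is $m^{\Atrop}$ by \eqref{eq:mA1}. For $m=1$ this is just $\ang{1}=1^{\Atrop}$.

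\emph{Case $m\equiv0\pmod4$.} Here $\gammaux(m,1)=\frac{m}{4}h+\frac{m}{4}h=\frac{m}{2}h$, which is $m^{\Atrop}$ for $m$ even.

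\emph{Case $m\equiv2\pmod4$.} The extra term $\ang{1}+\ang{-d}$ becomes $\ang{1}+\ang{-1}=h$. This gives $\gammaux(m,1)=h+\frac{m-2}{4}h+\frac{m-2}{4}h=\bigl(1+\frac{m-2}{2}\bigr)h=\frac{m}{2}h=m^{\Atrop}$.

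As a consistency check, I will compare with Corollary~\ref{cor:univ-square-product}, which already gives $\gammaux(m,1)m^{\Atrop}=(m^{\Atrop})^2$. The present identity is the stronger, unmultiplied statement, and the computation above shows that it holds on the nose. Indeed, only the hyperbolic relation $\ang{2}+\ang{-2}=h$, equivalently $\ang{2m}+\ang{-2m}=h$, is used. So the identity even lifts to $\GWuniv$ after writing $\ang{\pm2m}=\ang{m}\ang{\pm2}$.

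There is no real obstacle in the algebra. The only point needing care is the second sentence of the statement, the relation to \cite[Proposition~5.4]{JMPR25}. I will not reprove that proposition. I will only remark that the local identity $\gammaux(m,1)=m^{\Atrop}$ is the reason why, at a type-A merge, the grouped factor $\gammaux(m,1)m^{\Atrop}$ becomes the ordinary elevator square $(m^{\Atrop})^2$ of a non-merged elevator in Lemma~\ref{lem:jmpr-comparison}(iv). The count-level statement that a square parameter reduces the double point to two simple points is quoted from JMPR.
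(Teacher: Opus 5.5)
Your proof is correct and is essentially the paper's own verification: set $d=1$, apply $\ang{a}+\ang{-a}=h$ with $a=2m$ (and with $a=1$ in the case $m\equiv2\pmod4$), then collect coefficients in each of the three cases of \eqref{eq:gamma}. One small caveat on your aside: for general $m$ the symbols $\ang{m}$ and $\ang{2m}$ are not elements of $\GWuniv$, so the unmultiplied identity lifts only after adjoining the square class of $m$ and imposing the absorption relation $\ang{m}h=h$. This does not affect your proof in $\GW(k)$.
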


\begin{proof}[Verification]
At $d=1$, $\ang{-2dm}=\ang{-2m}$.

\emph{$m$ odd.}\;%
$\gammaux(m,1)=\ang{m}+\tfrac{m-1}{2}(\ang{2m}+\ang{-2m})
=\ang{m}+\tfrac{m-1}{2}h=m^{\Atrop}$,
using Lemma~\ref{lem:gw-arith}~\ref{it:hyp}.

\emph{$m\equiv0\pmod4$.}\;%
$\gammaux(m,1)=\tfrac{m}{4}(\ang{2m}+\ang{-2m})+\tfrac{m}{4}h
=\tfrac{m}{4}h+\tfrac{m}{4}h=\tfrac{m}{2}h=m^{\Atrop}$.

\emph{$m\equiv2\pmod4$.}\;%
$\gammaux(m,1)=\ang{1}+\ang{-1}
+\tfrac{m-2}{4}(\ang{2m}+\ang{-2m})+\tfrac{m-2}{4}h
=h+\tfrac{m-2}{2}h=\tfrac{m}{2}h=m^{\Atrop}$.
\end{proof}

\begin{corollary}[Dissolution]\label{cor:dissolution}
When $d_j=1$, the factor carrying the $j$-th merged point
specialises to the contribution obtained by replacing that
double point with two adjacent simple point conditions:
\begin{itemize}[nosep]
\item type~A:
  $\gammaux(m,1)\cdot m^{\Atrop}=(m^{\Atrop})^2$;
\item type~R: $\beta_j|_{d_j=1}=2\ang{2}=2\ang{1}$ in every
  $\GW(k)$ by Lemma~\ref{lem:two-torsion}\ref{it:2tors-a},
  corresponding to the two local deformations in
  \cite[Prop.~5.4]{JMPR25};
\item twin-tree multiplicity: the specialisation agrees with
  the deformed non-twin-tree contribution by
  \cite[Prop.~5.4]{JMPR25}.
\end{itemize}
In particular $\Nfloor|_{d_j=1}$ is the floor-diagram count
with $s-1$ double points and $r+2$ simple points.
\end{corollary}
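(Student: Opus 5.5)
The proof goes factor by factor: for a floor diagram $\cD$ compatible with a merge configuration, set $d_j=1$ in \eqref{eq:mult} and compare with the diagram in which the $j$-th merged pair has been replaced by two adjacent simple positions. In Definition~\ref{def:vertex-mult}, $d_j$ enters exactly one carrying factor: a type-A factor $\gammaux(m_v,d_j)m_v^{\Atrop}$, a type-R factor $\beta_j$, or a twin-tree factor $\mult^{\Atrop}(\cT_i)$. All other factors do not involve $d_j$. So it suffices to identify each of the three carrying factors at $d_j=1$, and then to check that the resulting sum over diagrams is the sum defining the count with $s-1$ double points and $r+2$ simple points.

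\textbf{Type A.} Proposition~\ref{prop:square-field} gives $\gammaux(m,1)=m^{\Atrop}$, so $\gammaux(m,1)m^{\Atrop}=(m^{\Atrop})^2$. Corollary~\ref{cor:univ-square-product} gives the same identity directly, even in $\GWuniv$. By Lemma~\ref{lem:jmpr-comparison}(iv), $(m^{\Atrop})^2$ is the grouped contribution of an ordinary geometric elevator of weight $m$, with one bipartite edge factor at each end. So after dissolution the elevator adjacent to the former merge enters the last product of \eqref{eq:mult} as an ordinary elevator. I must still check that the diagram with the two adjacent simple points sitting on the two ends of that elevator is a legitimate diagram of the $(r+2,s-1)$ problem, and that this correspondence is bijective on the relevant diagrams.

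\textbf{Type R.} Here $\beta_j|_{d_j=1}=\ang{2}+\ang{2}=2\ang{2}=2\ang{1}$ by Lemma~\ref{lem:two-torsion}\ref{it:2tors-a}. The factor $2$ counts the two local deformations of the four-valent picture into two trivalent vertices through two simple points, as in \cite[Prop.~5.4]{JMPR25}. Each of these two deformed diagrams carries the unchanged edge factors, since for type R only the four-valent local factor was grouped into $\beta_j$.

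\textbf{Twin trees.} For the twin-tree case I do not recompute \eqref{eq:twin-mult} at $d_j=1$ by hand. Instead I invoke \cite[Prop.~5.4]{JMPR25}: it states that the specialisation of the twin-tree contribution equals the sum of the contributions of the deformed non-twin configurations. The main obstacle is therefore the global bookkeeping, namely that the specialised diagrams, counted with these local multiplicities (1 for type A, 2 for type R, per Prop.~5.4 for twin trees), biject with $\cS_\Delta(r+2,s-1;\pi\setminus\{p_j\})$ with matching multiplicities. I would cite the count-level statement of \cite[Prop.~5.4]{JMPR25} together with Theorem~\ref{thm:JMPR}, which identifies the floor count with the tropical count where that proposition is proved. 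Lemma~\ref{lem:merge-graph} ensures that the positions of the other merged pairs are preserved, which gives the final sentence of the statement.
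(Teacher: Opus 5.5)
Your proposal is correct and follows essentially the paper's route: the three local identities (type~A via Proposition~\ref{prop:square-field}, type~R via Lemma~\ref{lem:two-torsion}\ref{it:2tors-a}, twin trees via \cite[Prop.~5.4]{JMPR25}), together with the count-level dissolution of \cite[Prop.~5.4]{JMPR25}, are exactly how the paper justifies the corollary. One small refinement: Lemma~\ref{lem:merge-graph} only supplies the combinatorial bookkeeping of the dissolved configuration, whereas the fact that the JMPR deformation actually realises it, without crossing other marked heights, comes from the locality argument in Proposition~\ref{prop:labeled-dissolution} (deformation scale below one third of the minimal vertical gap).
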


\begin{proposition}[Labelled dissolution]
\label{prop:labeled-dissolution}
Fix a merge configuration~$\pi$ with labelled double points
$1,\dots,s$.  Let~$\pi^{\mathrm{diss}}_j$ be the configuration
obtained by replacing the labelled $j$-th merged adjacent pair
by the two underlying adjacent simple positions, keeping the
labels and relative order of all other merged pairs.  Then
\[
  \Nfloor_{\Delta,\pi}(r,(d_1,\dots,d_s))\big|_{d_j=1}
  =
  \Nfloor_{\Delta,\pi^{\mathrm{diss}}_j}
    (r+2,(d_1,\dots,\widehat d_j,\dots,d_s))
  \quad\text{in }\GW(k).
\]
Equivalently, the specialization $d_j=1$ is compatible with
the merge positions and labels used in the cascade.
\end{proposition}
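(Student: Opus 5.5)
The plan is to compare the two sides diagram by diagram. The cleanest route is a bijection at the level of floor diagrams together with the local dissolution identities of Corollary~\ref{cor:dissolution}. Write $\cS_\pi=\cS_\Delta(r,s;\pi)$ and $\cS'=\cS_\Delta(r+2,s-1;\pi^{\mathrm{diss}}_j)$.

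\textbf{Step 1: sort the diagrams.} Partition $\cS_\pi$ according to the local type of the labelled $j$-th merged point: type~A with elevator weight $m$, type~R, or lying on a twin tree~$\cT$. Every other factor in~\eqref{eq:mult} is independent of $d_j$, because each $d_j$ occurs in exactly one carrying factor by Definition~\ref{def:vertex-mult}. So setting $d_j=1$ only changes that one local factor.

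\textbf{Step 2: match diagrams on the two sides.} For each type, describe the diagrams in $\cS'$ in which positions $p_j,p_j+1$ are simple and adjacent, with all other labelled pairs fixed. Lemma~\ref{lem:merge-graph} guarantees that the other pairs and their relative order are unchanged.

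In the type-A case, the merged point on a trivalent vertex with elevator of weight~$m$ splits into two simple points. These lie at the two ends of a geometric elevator of weight~$m$ that is not type~A, hence carries $(m^{\Atrop})^2$ by Lemma~\ref{lem:jmpr-comparison}(iv). The matching identity is $\gammaux(m,1)m^{\Atrop}=(m^{\Atrop})^2$, which follows from Proposition~\ref{prop:square-field} or Corollary~\ref{cor:univ-square-product}.

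In the type-R case, one diagram on the left corresponds to two local resolutions on the right. Each resolution has factor $\ang{1}$ and the same remaining edges, which matches $\beta_j|_{d_j=1}=2\ang{2}=2\ang{1}$ from Lemma~\ref{lem:two-torsion}.

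In the twin-tree case, we use~\eqref{eq:twin-mult} at $d_j=1$. The sum over $I$ collapses in pairs $I\leftrightarrow I\triangle\{j\}$, and this pairing has to be matched with the deformed non-twin diagrams. That matching is exactly the content of \cite[Prop.~5.4]{JMPR25}.

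\textbf{Step 3: check the correspondence is a partition.} Verify that these correspondences (one-to-one for type~A, one-to-two for type~R, and the \cite[Prop.~5.4]{JMPR25} correspondence for twin trees) together partition $\cS'$. Every diagram in $\cS'$ must arise exactly once. A diagram in which the two adjacent simple points lie on different floors comes from the type-A or type-R cases. One with both points on one elevator pair, or on a doubled edge, comes from a twin tree.

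\textbf{Step 4: sum.} Summing the diagrams with their multiplicities gives the equality in $\GW(k)$. Labels are preserved throughout because the procedure never touches the other merged pairs.

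The main obstacle is the twin-tree case, together with the surjectivity in Step~3. Twin trees can carry several double points, and dissolving one of them can break the tree into smaller twin trees plus ordinary elevators. To handle this, I would first reduce to the count-level statement \cite[Prop.~5.4]{JMPR25}, whose proof is local at the dissolved point. I would then argue that its deformation is supported in a neighbourhood of the $j$-th point in the vertical order, so it commutes with the fixed positions of the other pairs. That locality is precisely the labelled compatibility being claimed.
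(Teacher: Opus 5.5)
Your proposal is correct and follows essentially the paper's argument: the same three local identities at the labelled factor carrying $d_j$ (type~A via $\gammaux(m,1)m^{\Atrop}=(m^{\Atrop})^2$, type~R via $\beta_j|_{d_j=1}=2\ang{2}=2\ang{1}$, twin trees via \cite[Prop.~5.4]{JMPR25}), closed by the locality of the JMPR deformation; the paper makes this locality concrete by choosing the deformation scale below one third of the minimal vertical gap between marked levels, so that no other marked height is crossed. The explicit floor/elevator partition of $\cS'$ you sketch in Step~3 is heuristic and not needed: as your last paragraph already does, and as the paper does, the whole correspondence can be taken from \cite[Prop.~5.4]{JMPR25}.
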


\begin{proof}
The assertion is the position-labelled form of
\cite[Prop.~5.4]{JMPR25}.  The local comparison is as follows.
For a type-A merge, Proposition~\ref{prop:square-field} and
Lemma~\ref{lem:elevator-sq} give
\[
  \gammaux(m,1)m^{\Atrop}=(m^{\Atrop})^2,
\]
which is exactly the contribution of the two simple adjacent
conditions on the corresponding elevator.  For a type-R merge,
the specialization gives
$\beta_j|_{d_j=1}=2\ang{2}=2\ang{1}$ in~$\GW(k)$ by
Lemma~\ref{lem:two-torsion}\ref{it:2tors-a}; these are the two
local deformations appearing in the proof of
\cite[Prop.~5.4]{JMPR25}.  For a double point lying on a twin
tree, the same proposition treats the twin-tree cases by
deforming the double-edge component into the corresponding
non-twin-tree contribution.  All these replacements occur at
the labelled local factor carrying~$d_j$.  The deformation in
\cite[Prop.~5.4]{JMPR25} is performed in an arbitrarily small
neighbourhood of the chosen double point in a vertically
stretched configuration.  Choose its scale smaller than one
third of the minimal vertical gap between neighbouring marked
levels.  Then no other marked height is crossed, the two
resulting simple points occupy exactly the two adjacent
positions represented by the merged point, and the labels
attached to the other parameters keep the same local factors
after floor shrinking.  Hence the specialized sum is precisely
the floor-diagram sum for
$\pi^{\mathrm{diss}}_j$.
\end{proof}

\begin{remark}[Formal use of dissolution]
\label{rem:formal-dissolution}
We use Corollary~\ref{cor:dissolution} and
Proposition~\ref{prop:labeled-dissolution} at the level of the
local multiplicity formula, before substituting numerical
values for the remaining parameters.  The type-A identity
$\gammaux(m,1)m^{\Atrop}=(m^{\Atrop})^2$ is an identity already
in~$\GWuniv$.  The type-R and twin-tree dissolution statements
are used after applying the coefficient map to a field-valued
ring~$\GW(k)$: in type~R one uses
$\beta_j|_{d_j=1}=2\ang{2}=2\ang{1}$ in~$\GW(k)$
(Lemma~\ref{lem:two-torsion}\ref{it:2tors-a}), and in the
twin-tree case one uses exactly the case analysis in the proof
of~\cite[Prop.~5.4]{JMPR25}.  Thus, in the proof below,
formal specialisation $d_j=1$ is always followed by the map to
$\GW(k)$ before the inductive hypothesis is invoked.
\end{remark}

\begin{proposition}[Tropical Welschinger invariance]
\label{prop:broccoli}
Over $k=\bb{R}$, with all double-point parameters
$d_j<0$, one has $\sgn(\Delta N)=0$ for every unit shift of
a merge position.
\end{proposition}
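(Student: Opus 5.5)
The plan is to deduce the statement from the real broccoli invariance theorem of Gathmann--Markwig--Schroeter, transported through the JMPR identification of signatures. The chain is: floor count with configuration $\pi$ $\to$ tropical count through a vertically stretched configuration realising $\pi$ $\to$ signature equals the tropical Welschinger count $\to$ independence of point positions $\to$ $\sgn(\Delta N)=0$.

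\textbf{Step 1: pass to tropical curves.} Fix $k=\bb{R}$ and $d_1,\dots,d_s<0$, so each double point is a pair of complex conjugate points. For a merge configuration $\pi$, choose a vertically stretched real configuration realising $\pi$. It consists of $r$ real points at the unmerged positions and $s$ conjugate pairs whose tropicalisations sit at the merged positions. The floor-diagram formula \cite[Theorem~10.13]{JMPR25} identifies $\Nfloor_{\Delta,\pi}$ with the sum of $\mult^{\Atrop}(\Gamma)$ over tropical stable maps through this configuration. This is the equality $\Ntrop=\Nfloor$ of Theorem~\ref{thm:JMPR}, applied to the particular stretched configuration attached to $\pi$. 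Note that only this tropical-to-floor equality is used, and it is combinatorial; the algebraic correspondence is not needed.

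\textbf{Step 2: take signatures.} Apply the ring homomorphism $\sgn\colon\GW(\bb{R})\to\bb{Z}$ of Remark~\ref{rem:gw-R}. By \cite[Proposition~5.13]{JMPR25}, when all $d_j<0$ the signature of $\mult^{\Atrop}(\Gamma)$ is Shustin's tropical Welschinger multiplicity of $\Gamma$ with respect to $r$ real and $s$ conjugate-pair conditions. Hence $\sgn\Nfloor_{\Delta,\pi}$ equals the tropical Welschinger count $W^{\mathrm{trop}}_{\Delta,s}$ of the configuration chosen for $\pi$.

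\textbf{Step 3: invoke broccoli invariance.} In the toric del Pezzo case, the tropical Welschinger count agrees with the broccoli count \cite[Corollary~5.16]{GMS13}. The broccoli count is independent of the generic configuration \cite[Theorem~3.6 and Corollary~5.17]{GMS13}. The configurations for $\pi$ and $\pi'$ both have $r$ real and $s$ conjugate conditions, and they differ only in the vertical positions of the conditions. Therefore their counts coincide, and so
\[
\sgn(\Delta N)=W^{\mathrm{trop}}_{\Delta,s}(\pi)-W^{\mathrm{trop}}_{\Delta,s}(\pi')=0 .
\]
This argument in fact works for any two configurations, not just a unit shift.

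\textbf{Main obstacle: matching conventions.} The delicate point is the dictionary between JMPR double points in vertically stretched position and GMS conjugate-pair conditions. In GMS, a conjugate pair tropicalises to a single point carrying a "fat" marking. One must check that the GMS genericity of such fat points and the ordering of conditions are compatible with the JMPR stretched configurations for every merge configuration. One must also check that \cite[Prop.~5.13]{JMPR25} uses the same sign normalisation as the tropical Welschinger weight in \cite{GMS13}, rather than one differing by a global sign depending only on $\Delta$ and $s$; such a global sign would still cancel in $\Delta N$. I would verify this by comparing the local weights vertex type by vertex type: type A, type R, and twin trees, where $\sgn\beta_j=\sgn(\ang{2}+\ang{2d_j})=0$ and the twin-tree sums reduce to $\pm$ powers of $2$ or $0$, against Shustin's weights listed in \cite[\S5]{GMS13}.
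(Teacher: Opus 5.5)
Your proposal is correct and takes the same route as the paper. Both arguments use JMPR's Proposition~5.13 to identify the signature of the enriched multiplicity (with all $d_j<0$) with Shustin's tropical Welschinger multiplicity, then invoke the broccoli invariance of Gathmann--Markwig--Schroeter (Corollaries~5.16--5.17) to conclude that the signatures of the two counts agree; your explicit Step~1, identifying each $\Nfloor_{\Delta,\pi}$ with a tropical count through a stretched configuration realising~$\pi$, and your remark that a global sign convention would cancel in~$\Delta N$, only make explicit what the paper's short proof leaves implicit.
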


\begin{proof}
By \cite[Proposition~5.13]{JMPR25}, the signature of the
quadratically enriched tropical multiplicity agrees with
Shustin's tropical Welschinger multiplicity~\cite{Shu06}
for real point conditions and conjugate pairs.  With the
conventions of \cite[Proposition~5.13]{JMPR25}, this is the
same tropical Welschinger count that appears in
\cite[Corollaries~5.16--5.17]{GMS13}.  The
resulting Welschinger count is independent of the chosen
conditions by \cite[Corollary~5.17]{GMS13}, whose proof is
purely tropical via broccoli curves and bridges.  Therefore
the signatures of the two counts across a unit merge shift
are equal.
\end{proof}

\section{Localisation and \texorpdfstring{$d$}{d}-linearity}
\label{sec:linear}

\begin{proposition}[Localisation]\label{prop:local}
In~\eqref{eq:mult}, the extension~$d_j$ appears only in the
unique factor carrying the $j$-th merged point: either a
type-A factor, a type-R factor, or the twin-tree factor of
the unique twin tree containing that point.  All other
factors are independent of~$d_j$.
\end{proposition}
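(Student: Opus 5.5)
The plan is a direct inspection of the product~\eqref{eq:mult}, factor by factor, using the classification recalled in Lemma~\ref{lem:jmpr-comparison}. First, by Definition~\ref{def:floor} and Definition~\ref{def:count}, each labelled double point $j$ occupies exactly one merged point vertex of~$\cD$. Its local picture falls under exactly one of the vertex types (A)--(J) of \cite[Lemma~3.9]{JMPR25}. The bullets of Definition~\ref{def:vertex-mult} then assign it to exactly one of three classes:
\begin{enumerate}[label=\textup{(\roman*)},nosep]
\item the twin-tree class, if the point lies on a double-edge component;
\item the type-A class, if the point sits at a $3$-valent vertex with adjacent elevator and lies on no twin tree;
\item the type-R index set~$R$, otherwise.
\end{enumerate}
These classes are mutually exclusive by construction: the first bullet says twin-tree points are counted only in the twin-tree factor, and $R$ is defined as the complement of the other two. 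Twin trees are connected components of double edges, so a point on a twin tree lies in a unique one, and~$\cT_i$ is well defined.

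Second, I check that $d_j$ appears in the factor so assigned and in no other. The parameters enter~\eqref{eq:mult} only through three places: $\gammaux(m_v,d_v)$ via $d_v$, $\beta_j=\ang{2}+\ang{2d_j}$, and the twin-tree formulas~\eqref{eq:twin-mult}--\eqref{eq:twin-edge-mult}. In~\eqref{eq:twin-mult} only the parameters $d_i$ of the double points $q_1,\dots,q_t$ carried by~$\cT$ occur. In~\eqref{eq:twin-edge-mult} only the parameter of the double point carried by that twin edge occurs. The remaining factors are $m_e^{\Atrop}$ for $e\in E_{\mathrm{bd}}^{\neg A,\neg T}$ and unbounded weight-one ends. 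These depend only on weights, by~\eqref{eq:mA1} and Lemma~\ref{lem:jmpr-comparison}(iv)--(v), so they are $d$-free. A type-A factor for a vertex $v\ne$ the $j$-th merged vertex uses $d_v\neq d_j$ as a label, and similarly for $\beta_i$ with $i\neq j$ and for twin trees not containing~$q_j$.

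The main obstacle is the twin-tree bookkeeping. One must make sure that a double point attached at the root vertex of a twin tree, of type (C) or the (C)-part of (G), is not simultaneously counted as type-R or as type-A. The first bullet of Definition~\ref{def:vertex-mult} settles this, since such a point is included only in the twin-tree factor. One must also make sure that the regrouping in Lemma~\ref{lem:jmpr-comparison}(i) moves the adjacent edge factor $m_v^{\Atrop}$ into $\gamma_v$ without creating a second $d_j$-dependent factor. This holds because $m_v^{\Atrop}$ is $d$-free. I would state explicitly that this relies on JMPR's definition of twin trees as attached at a unique vertex \cite[Def.~3.14]{JMPR25}. Together these observations give uniqueness of the carrying factor and independence of all others.
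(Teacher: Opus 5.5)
Your proposal is correct and follows the paper's own argument. Both proceed by direct inspection of \eqref{eq:mult}: each twin-tree factor involves only the parameters of double points on that twin tree, and twin trees are disjoint; a merged point off the twin trees is either type~A or type~R, with its parameter in the corresponding factor; all remaining edge factors depend only on weights. You simply spell out more explicitly than the paper does why the three classes are mutually exclusive and why the regrouped factor $m_v^{\Atrop}$ is $d$-free.
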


\begin{proof}
Each twin-tree multiplicity $\mult^{\Atrop}(\cT_i)$ depends
only on extensions~$d_l$ of double points lying on~$\cT_i$,
and twin trees are disjoint.
If the $j$-th merged point is not on a twin tree, then it is
either type~A or type~R and its extension occurs in the
corresponding factor of~\eqref{eq:mult}.  If it is on a twin
tree, the extension occurs in that twin-tree factor and in
no other one.
The remaining bounded-edge weights are combinatorial.
\end{proof}

\begin{corollary}[$d_j$-linearity and coefficient stability]
\label{cor:dj-linear}
The lifted multiplicity expression is multi-affine in the
variables $\ang{d_1},\dots,\ang{d_s}$, with the relations
$\ang{d_j}^2=\ang{1}$.  In particular, for each $j$ there
exist $P_j,R_j\in\GW(k)$ independent of $d_j$ such that
\[
  \Delta N=P_j+\ang{d_j}R_j.
\]
Moreover, after extracting coefficients in any subset of the
variables, the resulting coefficient is still affine-linear
in each remaining variable.
\end{corollary}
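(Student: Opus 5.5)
We prove Corollary~\ref{cor:dj-linear} by combining Proposition~\ref{prop:local} with a check that each local factor carrying a parameter is affine-linear in its symbol. The multi-affine ring is $A_s=R[x_1,\dots,x_s]/(x_1^2-1,\dots,x_s^2-1)$ of Lemma~\ref{lem:multiaffine-cascade}, with $R$ the coefficient ring ($\GWuniv$ or $\GW(k)$ with $\ang{2},\ang{-1}$ fixed). We realise each $\mult^{\Atrop}(\cD)$ as the image of an explicit element of $A_s$ under $x_j\mapsto\ang{d_j}$. The relations $x_j^2=1$ map to $\ang{d_j}^2=\ang{d_j^2}=\ang{1}$ by~\eqref{eq:gw-rels}, so the substitution is a ring homomorphism.

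\textbf{Step 1: each carrying factor is affine-linear in its own symbol.} We go through the three kinds of factor in~\eqref{eq:mult}.
\begin{itemize}[nosep]
\item Type~A: by Proposition~\ref{prop:typeA}, $\gammaux(m,d)m^{\Atrop}$ equals $a+b\ang{d}$ with $a,b$ free of~$d$. Corollary~\ref{cor:d-coeff} gives $b=\tfrac{m-1}{2}\ang{-2}$ for $m\ge3$ odd and $b=0$ otherwise.
\item Type~R: we write $\beta_j=\ang{2}+\ang{2}\ang{d_j}$.
\item Twin trees: in~\eqref{eq:twin-mult}, each twin-edge factor~\eqref{eq:twin-edge-mult} uses $\ang{-d}=\ang{-1}\ang{d}$ and so is affine in the $\ang{d}$ of its own double point. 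The subset sum $\sum_I\ang{\prod_{i\in I}d_i}=\sum_I\prod_{i\in I}\ang{d_i}$ is multi-affine in $\ang{d_1},\dots,\ang{d_t}$.
\end{itemize}

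\textbf{Step 2: the product is multi-affine and the variables are separated.} A product of multi-affine elements in $A_s$ reduces to a multi-affine element using $x_j^2=1$, and we use this in exactly one place. Inside a twin tree, the twin-edge factor for $q_i$ and the subset sum can both involve~$x_i$, which produces $x_i^2$, and this is replaced by~$1$.

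By Proposition~\ref{prop:local}, different twin trees and different type-A or type-R factors involve disjoint sets of variables. All remaining factors, such as bounded edges and elevator squares (Lemma~\ref{lem:elevator-sq}), involve none. Hence each $\mult^{\Atrop}(\cD)$ is multi-affine. Summing over $\cS_\Delta(r,s;\pi)$ and $\cS_\Delta(r,s;\pi')$ with signs stays in the same module, so $\Delta N$ is multi-affine.

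\textbf{Step 3: the decomposition $\Delta N=P_j+\ang{d_j}R_j$ and coefficient stability.} By the unique expansion in Lemma~\ref{lem:multiaffine-cascade}, we group the monomials $x_I$ according to whether $j\in I$. This gives $\Delta N=P_j+x_jR_j$ with $P_j,R_j$ lying in the multi-affine algebra of the other variables. Their images in $\GW(k)$ are therefore independent of~$d_j$.

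Extracting a coefficient means taking the $a_I$-parts for a fixed pattern in the chosen variables. Each such part is again an element of the multi-affine algebra in the remaining variables, so it is affine-linear in each of them. This is also the recursion used in Lemma~\ref{lem:multiaffine-cascade}.

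\textbf{Main obstacle.} The only delicate point is that the expression must be fixed at the lifted level before substituting field values. In $\GW(k)$ itself the decomposition $P_j+\ang{d_j}R_j$ is not unique, for example when $d_j$ is a square. We therefore phrase everything in $A_s$ with the fixed normal forms~\eqref{eq:gamma}, \eqref{eq:twin-mult} and~\eqref{eq:twin-edge-mult}, and only at the end apply $x_j\mapsto\ang{d_j}$. Independence of $P_j,R_j$ from~$d_j$ then means they are images of elements not involving~$x_j$.
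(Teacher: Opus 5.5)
Your proposal is correct and takes essentially the same route as the paper's proof. The shared steps are: localisation via Proposition~\ref{prop:local}; affine-linearity of the type-A (Corollary~\ref{cor:d-coeff}), type-R and twin-tree carrying factors; reduction of products inside a twin tree using $\ang{d_l}^2=\ang{1}$; summation over diagrams; and coefficient extraction as selection of affine parts. The only difference is that you write down the multi-affine presentation in $A_s$ explicitly before substituting $x_j\mapsto\ang{d_j}$. This is the right way to read ``independent of~$d_j$'', and $R=\GW(k)$ already suffices for it. The paper leaves this implicit here and only formalises the lifted presentation in Step~3.b.
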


\begin{proof}
By Proposition~\ref{prop:local}, $d_j$ enters through a
single carrying factor.  Type-A factors are affine-linear by
Corollary~\ref{cor:d-coeff}, and type-R factors by
$\beta_j=\ang{2}+\ang{2}\ang{d_j}$.  In a twin tree, the
subset sum in~\cite[Definition~4.3]{JMPR25} and the
twin-edge factors~\eqref{eq:twin-edge-mult} are products of
terms affine in the relevant $\ang{d_l}$; using
$\ang{d_l}^2=\ang{1}$, any product is again affine in each
individual variable.  Thus each~$d_j$ occurs in exactly one
local carrying factor; in the twin-tree case this factor may be
a sum of monomials involving several variables~$\ang{d_l}$, but
it is multi-affine after imposing $\ang{d_l}^2=\ang{1}$.  Thus
every diagram contribution is
multi-affine, and summing over diagrams preserves this
property.  Taking a coefficient in one variable simply
selects the corresponding affine part, so coefficient
extraction preserves multi-affinity in the remaining
variables.
\end{proof}

\section{Proof of the main theorem}\label{sec:main}

\begin{theorem}[=\,Theorem~\ref{thm:main-intro}]
\label{thm:main}
Under Setting~\ref{set:JMPR}, the count
$\Nfloor_\Delta(r,(d_1,\dots,d_s))$ is independent of the
merge positions.
\end{theorem}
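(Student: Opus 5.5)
We argue by induction on the number~$s$ of double points, and reduce to a single unit shift. By Lemma~\ref{lem:merge-graph} the merge-position graph is connected, so it suffices to treat two configurations $\pi,\pi'$ differing by a unit shift of one labelled pair. For $s=0$ there are no merge choices. Assume invariance for $s-1$ double points and all admissible fields.

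\textbf{Step 1: factorisation.} We work with the lifted multi-affine expression of $\Delta N$ in the variables $x_j=\ang{d_j}$, as in Corollary~\ref{cor:dj-linear}. Its coefficients lie in the image of $\GWuniv$. This follows from Proposition~\ref{prop:local}, Corollary~\ref{cor:d-coeff}, the form $\beta_j=\ang{2}+\ang{2}\ang{d_j}$, the twin-tree formula~\eqref{eq:twin-mult} and Lemma~\ref{lem:elevator-sq}, since all constants are built from $\ang{\pm1},\ang{\pm2},h$ and powers of~$2$.

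We then apply the multi-affine cascade, Lemma~\ref{lem:multiaffine-cascade}, with $\psi$ equal to $\varphi_k$ composed with evaluation at $x_j=\ang{d_j}$. The partial sums $S_q$ are the specialisations $x_{j_q}=1$ of the intermediate coefficients. By Proposition~\ref{prop:labeled-dissolution} and Remark~\ref{rem:formal-dissolution}, setting $d_{j_q}=1$ turns $\Delta N$ into a merge-difference for $s-1$ double points. Lemma~\ref{lem:merge-graph} says this difference is either the same unit shift or identically zero. The induction hypothesis, applied over every admissible field and coefficientwise in the remaining variables, then kills each $\psi(S_q\prod_{p<q}(x_{j_p}-1))$. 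Hence
\[
\Delta N=C\prod_{j=1}^s(\ang{d_j}-\ang{1}),
\]
where $C$ is the image of the top coefficient $\widetilde C=n_1\ang1+n_2\ang2+mh\in\GWuniv$. Since $hI=0$ by Lemma~\ref{lem:gw-arith}, the $h$-part of $C$ dies, so $\Delta N=(n_1+n_2\ang2)\prod(\ang{d_j}-\ang1)$. The integers $n_1,n_2$ are independent of $k$ and of the $d_j$.

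\textbf{Step 2: the real input.} Over $\bb{R}$ take all $d_j=-1$. The signature of $\prod(\ang{d_j}-\ang1)$ is $(-2)^s\neq0$ and $\sgn\ang2=1$. Proposition~\ref{prop:broccoli} gives $\sgn\Delta N=0$, hence $n_1+n_2=0$. Therefore
\[
\Delta N=\pm n_1\langle\!\langle2,d_1,\dots,d_s\rangle\!\rangle,
\]
and by Lemma~\ref{lem:two-torsion} this is $2$-torsion. Only the parity of $n_1$ matters. If $\sqrt2\in k$ then $\ang2=\ang1$ and $\Delta N=0$, which proves Corollary~\ref{cor:sqrt2-tropical}.

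\textbf{Step 3: parity of $n_1$.} This is the main obstacle, since no tropical input visibly controls $n_1 \bmod 2$. Our plan is to choose the single test field $K=\bb{Q}((t_1))\cdots((t_s))$, or with $\bb{Q}$ replaced by a field in which $\ang2\neq\ang1$ and admissible, and take $d_j=t_j$. There the Pfister form $\langle\!\langle2,t_1,\dots,t_s\rangle\!\rangle$ is nonzero in $W(K)$ by iterated Springer residues. Indeed it is anisotropic, since $\langle\!\langle2\rangle\!\rangle$ is anisotropic over $\bb{Q}$.

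Over $K$, Theorem~\ref{thm:JMPR} identifies both floor counts with the algebraic count, via the correspondence and Puiseux comparison of \cite[Thm.~1.2, Lem.~2.22]{JMPR25}. The algebraic count is independent of the point configuration by \cite[Thm.~2.19]{JMPR25}. Hence $\Delta N=0$ over $K$, which forces $n_1$ to be even. Because $n_1$ is universal, $\Delta N=0$ over every admissible field. In writing this step we will check that the chosen $K$ satisfies the characteristic and perfectness hypotheses, or pass to its perfect closure.
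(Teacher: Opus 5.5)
Your proposal is correct and is essentially the paper's proof: reduction to unit shifts via Lemma~\ref{lem:merge-graph}, the dissolution cascade producing a universal top coefficient $\widetilde C\in\GWuniv$, the signature/broccoli input forcing $n_1+n_2=0$, $2$-torsion of the virtual Pfister element, and a single parity check over $\bb{Q}((t_1))\cdots((t_s))$ via Springer's theorem together with the JMPR correspondence and algebraic invariance. Two small points: the induction hypothesis gives pointwise vanishing in $\GW(k)$ for every tuple of remaining parameters, not coefficientwise vanishing in the lift, and pointwise vanishing is exactly what Lemma~\ref{lem:multiaffine-cascade} needs; and in Step~3 you should note that each $t_j$ remains a non-square in $K$, so the double conditions are genuine quadratic field extensions, while $K$ has characteristic $0$ and is therefore already perfect.
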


\subsection{Outline of the proof}\label{sub:outline}

The six steps are:

\begin{enumerate}[label=\textsc{Step~\arabic*.},
                  leftmargin=2.2cm, itemsep=4pt]
\item (\emph{Localisation and $d_s$-factor.})
  Fix~$s$ double points and a unit shift of the $s$-th merge.
  By Corollary~\ref{cor:dj-linear} and an induction on~$s$,
  $\Delta N=P_s+\ang{d_s}R_s$ with
  $\Delta N|_{d_s=1}=0$, forcing $P_s=-R_s$ and
  $\Delta N=R_s(\ang{d_s}-\ang{1})$.
\item (\emph{Dissolution cascade.})
  Iterate the same trick for $j=s-1,s-2,\dots,1$, each time
  using the inductive hypothesis
  $\Delta N|_{d_j=1}=0$ to extract a factor
  $(\ang{d_j}-\ang{1})$.  After $s-1$ further iterations,
  \begin{equation}\label{eq:full-outline}
    \Delta N=C\cdot\prod_{l=1}^{s}(\ang{d_l}-\ang{1}),
  \end{equation}
  with $C\in\GW(k)$ independent of all~$d_l$.
\item (\emph{Monomial structure.})
  The coefficient~$C$ is the ``top multilinear coefficient''
  of~$\Delta N$ in the~$\ang{d_l}$, namely the coefficient of
	  the monomial $\ang{d_1}\cdots\ang{d_s}$ after expanding
	  multi-affinely in the variables~$\ang{d_l}$ using the fixed
	  normal form of Sections~\ref{sec:floor}--\ref{sec:typeA}.
	  This fixed presentation defines a lift~$\widetilde{C}$ in the
	  universal ring~$\GWuniv$.  Since $\GWuniv$ has
  $\bb{Z}$-basis $\{\ang{1},h,\ang{2}\}$, it has a unique
  decomposition
  \[
    \widetilde{C}=n_1\ang{1}+n_2\ang{2}+mh
    \in\GWuniv,\qquad n_1,n_2,m\in\bb{Z},
  \]
  with integers uniquely determined in the universal ring, hence
  field-independent.
\item (\emph{Broccoli determination and virtual Pfister reduction.})
  Specialise to $\bb{R}$ with all $d_l<0$.  The signature map
  gives $\sgn(\Delta N)=(n_1+n_2)(-2)^s$.
  Proposition~\ref{prop:broccoli} forces $n_1+n_2=0$, hence
	  \[
	    \Delta N
	    =(-1)^s n_1\langle\!\langle 2,d_1,\dots,d_s\rangle\!\rangle
	    \in\GW(k).
	  \]
\item (\emph{$2$-torsion.})
  Witt's identity $2\ang{a}=2\ang{2a}$
  (Lemma~\ref{lem:two-torsion}) makes $\Delta N$
  $2$-torsion: the class depends only on~$n_1\bmod 2$.
\item (\emph{Pointwise parity closure.})
  Specialise at
  $k=\bb{Q}((u_1))\cdots((u_s))$ with $d_l=u_l$.  The JMPR
  algebraic invariance and correspondence
  \cite[Thm.~2.19, Lem.~2.22, and Thm.~1.2]{JMPR25}
  give $\Delta N=0$ there, while the virtual Pfister element is
  non-zero.  Hence $n_1\equiv 0\pmod 2$; by
  field-independence of~$n_1$, $\Delta N=0$ in every
  admissible~$\GW(k)$.
\end{enumerate}

\subsection{Detailed proof}\label{sub:detailed}

\begin{proof}[Proof of Theorem~\ref{thm:main}]
Merge configurations are matchings of size~$s$ in the
ordered set of $r+2s$ point positions.  The graph of such
matchings is connected under unit slides of one matched pair
to a neighbouring free pair by Lemma~\ref{lem:merge-graph}.
It therefore suffices to show
$\Delta N=0$ for each unit shift $p\to p+1$ of a single
merge position.  We proceed by induction on~$s$
simultaneously over all~$\Delta$ and~$k$.

\medskip\noindent\textbf{Base case $s=0$.}\;%
With no double points there is no merge position to shift:
the count is trivially position-independent.

\medskip\noindent\textbf{Inductive step.}\;%
Assume the theorem holds for $\leq s-1$ double points.
Fix~$s$ double points with extensions $d_1,\dots,d_s$ and a
unit shift of the $s$-th merge.

\medskip
\textsc{Step~1.}\;\emph{First factor.}\;%
By Corollary~\ref{cor:dj-linear} with $j=s$,
\begin{equation}\label{eq:lin-s}
  \Delta N=P_s+\ang{d_s}R_s.
\end{equation}
Set $d_s=1$.  By
Proposition~\ref{prop:labeled-dissolution}, the count
becomes that of a problem with $s-1$ double points and
$r+2$ simple points.  Since the shifted pair is precisely
the one being dissolved, Lemma~\ref{lem:merge-graph} says
that the two dissolved merge configurations are identical;
in particular $\Delta N|_{d_s=1}=0$.
From~\eqref{eq:lin-s}, $0=P_s+R_s$, whence $P_s=-R_s$ and
\begin{equation}\label{eq:one-factor}
  \Delta N=R_s(\ang{d_s}-\ang{1}).
\end{equation}

\medskip
\textsc{Step~2.}\;\emph{Cascade.}\;%
If $s=1$, Step~1 is already the full factorisation, with
$C=R_s$.  Assume from now on that $s\ge2$.
We now index the cascade by the number~$q$ of factors already
extracted.  Suppose at stage~$q$, with $1\le q<s$, that
\begin{equation}\label{eq:partial}
  \Delta N=R^{(q)}\cdot
    \prod_{\ell\in S_q}(\ang{d_\ell}-\ang{1}),
\end{equation}
with $S_q\subset\{1,\dots,s\}$, $|S_q|=q$, $s\in S_q$, and
$R^{(q)}$ depending only on the parameters
$\{d_\ell:\ell\notin S_q\}$.
The base case is~\eqref{eq:one-factor}, namely
$q=1$, $S_1=\{s\}$, and $R^{(1)}=R_s$.

Choose a label $a_q\notin S_q$; for definiteness take the
smallest such label.  By Corollary~\ref{cor:dj-linear}, write
\begin{equation}\label{eq:Rj-linear}
  R^{(q)}=R_0+\ang{d_{a_q}}R_1
\end{equation}
with $R_0,R_1$ independent of~$d_{a_q}$ and of
$\{d_\ell:\ell\in S_q\}$.

Set $d_{a_q}=1$.  By Proposition~\ref{prop:labeled-dissolution}
and the compatibility statement in Lemma~\ref{lem:merge-graph}, the
specialised wall-crossing is the wall-crossing for a problem
with $s-1$ double points and $r+2$ simple points.  The
inductive hypothesis gives $\Delta N|_{d_{a_q}=1}=0$, so
\[
  (R_0+R_1)\prod_{\ell\in S_q}(\ang{d_\ell}-\ang{1})=0
  \qquad\forall\,d_\ell,\;\ell\in S_q.
\]
Substituting~\eqref{eq:Rj-linear} into~\eqref{eq:partial}:
\begin{align*}
  \Delta N
  &=(R_0+\ang{d_{a_q}}R_1)
    \prod_{\ell\in S_q}(\ang{d_\ell}-\ang{1})\\
  &=\underbrace{(R_0+R_1)
    \prod_{\ell\in S_q}(\ang{d_\ell}-\ang{1})}_{=0}
   +R_1(\ang{d_{a_q}}-\ang{1})
    \prod_{\ell\in S_q}(\ang{d_\ell}-\ang{1}).
\end{align*}
Setting $S_{q+1}=S_q\cup\{a_q\}$ and $R^{(q+1)}=R_1$
yields~\eqref{eq:partial} with~$q$ replaced by~$q+1$.
After the stage $q=s-1$ we obtain
\begin{equation}\label{eq:full}
  \Delta N=C\cdot\prod_{l=1}^{s}(\ang{d_l}-\ang{1}),
  \qquad C\in\GW(k),
\end{equation}
with $C$ independent of all~$d_l$.

\medskip
\textsc{Step~3.}\;\emph{Structure of $C$.}\;%
We analyse $C$ factor-by-factor, lift it to
the universal ring~$\GWuniv$ of
Definition~\ref{def:Guniv} (rank~$3$ over~$\bb{Z}$) and
determine its module-theoretic coordinates.

\emph{Step~3.a: factor-by-factor lift.}\;%
Consider the extended coefficient ring
\[
  \widetilde{\GW}
  \coloneqq\GWuniv[\ang{d_1},\dots,\ang{d_s}]\big/
  \bigl(\ang{d_l}^2-1\bigr)_{l=1}^{s},
\]
a free $\bb{Z}$-module of rank $3\cdot2^s$ with basis
$\{g\cdot\!\prod_{l\in I}\ang{d_l}:
  g\in\{\ang{1},\ang{-1},\ang{2}\},\,I\subseteq\{1,\dots,s\}\}$.
The canonical ring homomorphism
$\widetilde{\varphi_k}\colon\widetilde{\GW}\to\GW(k)$ sending
$\ang{a}\mapsto\ang{a}_k$ and
$\ang{d_l}\mapsto\ang{d_l}_k$ is well-defined because the
imposed relations $\ang{d_l}^2=1$,
$\ang{2}+\ang{-2}=h$ hold in every~$\GW(k)$.

Each multiplicative building block of $\mult^{\Atrop}(\cD)$ is
lifted by the fixed normal form to~$\widetilde{\GW}$:
\begin{itemize}[nosep]
\item \emph{Elevator squares
  $(m^{\Atrop})^2$}
  (Lemma~\ref{lem:jmpr-comparison} and
  Lemma~\ref{lem:elevator-sq}): after regrouping the two
  bipartite edge factors of each non-type-A, non-twin elevator, this
  is an integer combination of
  $\ang{1}$ and~$h$, so in $\bb{Z}\ang{1}+\bb{Z}h$.
\item \emph{Type-A merge factors
  $\gammaux(m,d_l)\cdot m^{\Atrop}$}
  (Proposition~\ref{prop:typeA}):
  for $m$ even, $\tfrac{m^2}{2}h\in\bb{Z}h$;
  for $m$ odd, $\ang{1}+\tfrac{m-1}{2}\bigl(\ang{2}
  +\ang{-2}\ang{d_l}\bigr)+\tfrac{m(m-1)}{2}h$, i.e.\ in
  $\bb{Z}\ang{1}+\bb{Z}\ang{2}+\bb{Z}\ang{-2}\ang{d_l}+\bb{Z}h$.
  In both cases the symbols $\ang{m}$ for
  $m\not\in\{\pm1,\pm2\}$ have already cancelled via
  $\ang{m}^2=\ang{1}$ inside the multiplication
  (cf.~\eqref{eq:odd-product}).
\item \emph{Type-R merge factors}
  $\beta_j=\ang{2}+\ang{2}\ang{d_j}$: in
  $\bb{Z}\ang{2}+\bb{Z}\ang{2}\ang{d_j}$.
\item \emph{Twin-tree factors}
  (\cite[Def.~4.3]{JMPR25}):
  \[
    \mult^{\Atrop}(\cT)
    =\prod_{(e,e')}\mult^{\Atrop}(e,e')\cdot\ang{2^{t-1}}
     \sum_{|I|\equiv m_\circ\!\!\bmod 2}
       \ang{\prod_{i\in I}d_i}.
  \]
  Each twin edge mult~\eqref{eq:twin-edge-mult} lies in
  $\bb{Z}\ang{1}+\bb{Z}\ang{-d_i}+\bb{Z}h
  \subset\bb{Z}\ang{1}+\bb{Z}h+\bb{Z}\ang{-1}\ang{d_i}$,
  and the sum runs over products
  $\prod_{i\in I}\ang{d_i}$; the whole is in $\widetilde{\GW}$.
\item \emph{Four-valent non-twin-tree factors.}
  In the floor-diagram multiplicity~\eqref{eq:mult}, their
  contribution is accounted for by the type-R factor~$\beta_i$
  attached to the corresponding double point, together with the
  bounded-edge factors already listed above.  The factors
  $(m_v)^{\Atrop}(m_v')^{\Atrop}$ are kept in this form; replacing
  them by elevator squares would change the rank.
\end{itemize}

Hence $\mult^{\Atrop}(\cD)\in\GW(k)$ is the image
under~$\widetilde{\varphi_k}$ of an element
$\widetilde{\mult}(\cD)\in\widetilde{\GW}$ depending only on
the combinatorial data of~$\cD$.

\emph{Step~3.b: universal coefficient extraction.}\;%
Let $\widetilde{\Delta N}\in\widetilde{\GW}$ be the
difference of the two sums of lifted diagram multiplicities.
The ring~$\widetilde{\GW}$ records the chosen multi-affine
presentation of this expression.  The integers obtained below
are attached to this fixed normal form.  The
\emph{top multilinear coefficient} means the coefficient of
the monomial containing all variables~$\ang{d_l}$ in that
presentation.  Since each lifted factor is $\bb{Z}$-affine in
each~$\ang{d_l}$, this coefficient is well-defined.  Define it to be
\begin{equation}\label{eq:C-lift}
  \widetilde{C}\in\GWuniv
\end{equation}
free of all $\ang{d_l}$.

The coefficient just defined is the coefficient extracted by
the cascade.  Apply Lemma~\ref{lem:multiaffine-cascade} to
$\widetilde{\Delta N}$, with the same order of variables as in
Steps~1--2.  At each stage the specialisation term~$S_q$ of
that lemma is the expression obtained by setting the relevant
$d_j$ equal to~$1$.  After applying
$\widetilde{\varphi_k}$, Remark~\ref{rem:formal-dissolution}
and Lemma~\ref{lem:merge-graph} identify the corresponding
summand
$S_q\prod_{p<q}(\ang{d_{j_p}}-\ang{1})$
with the wall-crossing for the dissolved problem, so the
inductive hypothesis makes it vanish in~$\GW(k)$.  The lemma therefore
shows that the coefficient multiplying
$\prod_l(\ang{d_l}-\ang{1})$ in the field-valued factorization
is precisely
\[
  C_k=\varphi_k(\widetilde{C}).
\]
No injectivity of $\varphi_k$ is used: the multi-affine
coefficient is extracted in the fixed presentation
$\widetilde{\Delta N}$, and only the displayed vanishing
statements are imposed after passage to~$\GW(k)$.

\emph{Step~3.c: universal coordinates of $\widetilde{C}$.}\;%
No further diagram-by-diagram parity classification is
needed at this point.  The preceding steps have already
placed the extracted coefficient in~$\GWuniv$, and
Lemma~\ref{lem:GWuniv-props}\ref{it:GWuniv-basis} gives the
$\bb{Z}$-basis $\{\ang{1},h,\ang{2}\}$.  Therefore there are
unique integers $n_1,n_2,m$ such that
\begin{equation}\label{eq:C-struct}
  \widetilde{C}=n_1\ang{1}+n_2\ang{2}+mh\in\GWuniv,
  \qquad n_1,n_2,m\in\bb{Z},
\end{equation}
and
$\varphi_k(\widetilde{C})=n_1\ang{1}_k+n_2\ang{2}_k+mh_k$.

\medskip
\textsc{Step~4.}\;\emph{Broccoli and virtual Pfister reduction.}\;%
Apply the signature $\sgn\colon\GW(\bb{R})\to\bb{Z}$ to
$\Delta N$, specialising to $k=\bb{R}$ with every $d_l<0$.
Then $\ang{d_l}_\bb{R}=\ang{-1}_\bb{R}$ and
$\sgn(\ang{d_l}-\ang{1})=-2$.  Over~$\bb{R}$,
$\sgn(\ang{1})=\sgn(\ang{2})=1$ and $\sgn(h)=0$, so
$\sgn(\varphi_\bb{R}(\widetilde{C}))=n_1+n_2$, whence
\[
  \sgn(\Delta N)=(n_1+n_2)\cdot(-2)^s.
\]
By Proposition~\ref{prop:broccoli}
(\cite[Corollary~5.17]{GMS13}), $\sgn(\Delta N)=0$, and
therefore
\begin{equation}\label{eq:C-after-brocc}
  n_2=-n_1,\qquad
  \widetilde{C}=n_1\bigl(\ang{1}-\ang{2}\bigr)+mh\in\GWuniv
  \quad(n_1,m\in\bb{Z}).
\end{equation}

Applying $\varphi_k$ and using
$h(\ang{d_l}-\ang{1})=0$
(Lemma~\ref{lem:gw-arith}\ref{it:annihilate}) to kill the
$mh$ term, we obtain the \emph{virtual Pfister reduction}
\begin{equation}\label{eq:Pfister}
  \Delta N
  =n_1\bigl(\ang{1}-\ang{2}\bigr)\prod_{l=1}^{s}
    \bigl(\ang{d_l}-\ang{1}\bigr)
  =(-1)^s n_1\cdot
    \langle\!\langle 2,d_1,\dots,d_s\rangle\!\rangle
  \in I^{s+1}\subset\GW(k),
\end{equation}
where $\langle\!\langle a_1,\dots,a_n\rangle\!\rangle
=\prod_{i=1}^{n}(\ang{1}-\ang{a_i})$ denotes a virtual
Pfister element of~$\GW(k)$.  Its image in~$W(k)$ is the class
of the genuine Pfister form
$\ang{1,-a_1}\otimes\cdots\otimes\ang{1,-a_n}$.
The sign~$(-1)^s$ comes from replacing each factor
$\ang{d_l}-\ang{1}$ by $-(\ang{1}-\ang{d_l})$; it will be
irrelevant for the subsequent parity argument.

\medskip
\textsc{Step~5.}\;\emph{$2$-torsion and universal
vanishing.}\;%
By Lemma~\ref{lem:two-torsion}\ref{it:2tors-pfister},
$2\langle\!\langle 2,d_1,\dots,d_s\rangle\!\rangle=0$
in~$\GW(k)$.  Combining with~\eqref{eq:Pfister}, the
$\GW(k)$-class of $\Delta N$ depends only on the residue
$n_1\bmod 2\in\bb{Z}/2$.  If $\sqrt2\in k$, then
$\ang{2}=\ang{1}$ and~\eqref{eq:Pfister} gives
$\Delta N=0$ immediately.  Hence, over such fields,
Steps~1--5 already prove the theorem by tropical means.

For a proof valid over all admissible fields, it remains to
establish the single field-independent parity condition
\begin{equation}\label{eq:parity-cond}
  n_1\equiv 0\pmod 2.
\end{equation}

\medskip
\textsc{Step~6.}\;\emph{Closing the parity of~$n_1$.}\;%
At this stage, Steps~1--5 have established, \emph{purely by
tropical arguments} (cascade, broccoli invariance, and
Witt's identity $2\ang{1}=2\ang{2}$), that
\begin{equation}\label{eq:Delta-N-2tors}
  \Delta N=(-1)^s n_1\cdot
  \langle\!\langle 2,d_1,\dots,d_s\rangle\!\rangle
  \in\GW(k),\qquad 2\Delta N=0,
\end{equation}
for a universal integer $n_1\in\bb{Z}$, uniquely determined
by the coefficient~$\widetilde{C}\in\GWuniv$.  Since
$\Delta N$ depends only on $n_1\bmod 2$, it suffices to verify
\begin{equation}\label{eq:parity-cond-final}
  n_1\equiv 0\pmod 2
\end{equation}
for one field~$k$ and one tuple
$(d_1,\dots,d_s)\in(k^\times)^s$.  We choose them so that
$\langle\!\langle 2,d_1,\dots,d_s\rangle\!\rangle$ is non-zero
in~$\GW(k)$.

\smallskip\noindent
\emph{Pointwise specialisation over an iterated Laurent
series field.}\;%
Let
\[
  k_s=\bb{Q}((u_1))\cdots((u_s))
  \qquad\text{and}\qquad d_l=u_l .
\]
This is a perfect field of characteristic~$0$.  We prove the
needed non-vanishing in the Witt ring first, using genuine
quadratic forms.  Set $F_0=\bb{Q}$ and
$F_i=\bb{Q}((u_1))\cdots((u_i))$.  Let
\[
  \Pi_i
  =\ang{1,-2}\otimes\ang{1,-u_1}\otimes\cdots
   \otimes\ang{1,-u_i}
\]
be the usual $(i+1)$-fold Pfister form over~$F_i$.  The
binary form $\Pi_0=\ang{1,-2}$ is anisotropic over~$\bb{Q}$,
since a non-trivial zero would make~$2$ a square in~$\bb{Q}$.

We use Springer's theorem in the following form: if~$F$ is
complete discretely valued with uniformizer~$\varpi$ and
residue field of characteristic different from~$2$, then
$\varphi\perp\varpi\psi$ is anisotropic over~$F$ exactly when
the residue forms $\bar\varphi$ and~$\bar\psi$ are anisotropic
\cite[Chap.~VI, \S1, Thm.~1.4 and Prop.~1.9]{Lam05}.  Passing
from~$F_{i-1}$ to $F_i=F_{i-1}((u_i))$, we have
\[
  \Pi_i=\Pi_{i-1}\otimes\ang{1,-u_i}
       =\Pi_{i-1}\perp u_i(-\Pi_{i-1}).
\]
The two residue forms are therefore $\Pi_{i-1}$ and
$-\Pi_{i-1}$, both anisotropic whenever~$\Pi_{i-1}$ is.
Springer gives by induction that~$\Pi_s$ is anisotropic over
$k_s$.

The image in~$W(k_s)$ of the virtual class
\[
  \langle\!\langle2,u_1,\dots,u_s\rangle\!\rangle
  =(\ang{1}-\ang{2})\prod_{l=1}^{s}(\ang{1}-\ang{u_l})
  \in\GW(k_s)
\]
is the Witt class of~$\Pi_s$, because in the Witt ring
$-\ang{a}=\ang{-a}$.  Since~$\Pi_s$ is anisotropic, its Witt
class is non-zero; hence the displayed class is non-zero
already in~$\GW(k_s)$.  By
Lemma~\ref{lem:two-torsion}\ref{it:2tors-pfister}, it has
additive order~$2$.
The anisotropy argument takes place over the Laurent field
$k_s$.  The Puiseux comparison below serves to apply the JMPR
correspondence to this field-valued enumerative problem.

Let $S_\Delta$ be the split toric del Pezzo surface over
$k_s$ associated with~$\Delta$, and let $D_\Delta$ be the
toric curve class.  We use the algebraic invariance theorem
in the form recalled by JMPR
\cite[Thm.~2.19 and Def.~2.20]{JMPR25}.  It applies to the
finite \'etale tuple
\[
  \sigma_s=(k_s,\ldots,k_s,
            k_s(\sqrt{u_1}),\ldots,k_s(\sqrt{u_s})).
\]
By the Laurent-series square-class decomposition, each~$u_i$
remains non-square in~$k_s$
\cite[Chap.~VI, \S1, Lem.~1.1 and Cor.~1.3]{Lam05}.
Equivalently, for any field~$F$ of characteristic different
from~$2$,
\[
  F((t))^\times/F((t))^{\times2}
  \simeq F^\times/F^{\times2}\oplus(\bb{Z}/2)\cdot t .
\]
At each later Laurent extension, an earlier~$u_i$ is a unit
whose residue is still~$u_i$; hence adjoining later Laurent
variables does not turn it into a square.  The quadratic
algebras in~$\sigma_s$ are therefore fields.

Now apply the floor-diagram formula
\cite[Thm.~10.13]{JMPR25}, algebraic invariance
\cite[Thm.~2.19]{JMPR25}, Puiseux comparison
\cite[Lem.~2.22]{JMPR25}, and the enriched tropical
correspondence \cite[Thm.~1.2]{JMPR25}.  With base field~$k_s$
and parameters~$d_i=u_i$, each merge configuration~$\eta$
gives the chain
\[
  \Nfloor_{\Delta,\eta}(r,(u_1,\dots,u_s))
  =\Ntrop_{\Delta,\eta}(r,(u_1,\dots,u_s))
  =N^{\Atrop}_\Delta(r,(u_1,\dots,u_s))
  \quad\text{in }\GW(k_s),
\]
where the middle term is the vertically stretched tropical
count for the chosen configuration.  The right-hand side is the
algebraic count attached to~$\sigma_s$ and is independent of
the point configuration by the JMPR algebraic invariance
statement just quoted.  Applying this to the two merge
configurations in~$\Delta N$ gives $\Delta N=0$ in~$\GW(k_s)$.
Combined with~\eqref{eq:Delta-N-2tors} and the fact that the
virtual Pfister element above has order~$2$, we obtain
$n_1\equiv 0\pmod 2$.

\smallskip\noindent
\emph{Universality.}\;%
Because $n_1\in\bb{Z}$ is field-independent by construction
in~$\GWuniv$, the
parity~\eqref{eq:parity-cond-final} propagates to every
field~$k$ satisfying Setting~\ref{set:JMPR}.  Combined
with~\eqref{eq:Delta-N-2tors} and
Lemma~\ref{lem:two-torsion}\ref{it:2tors-pfister}, we obtain
$\Delta N=0$ in $\GW(k)$ for every admissible~$k$, closing
the induction.
\end{proof}

\subsection{A residual tropical closure}
\label{sub:trop-reduction}

The algebraic input of Step~6 is used only to verify the
parity~$n_1\equiv0\pmod2$.  The obstruction lives in the
following elementary residual quotient.  Here ``residual''
means that we keep only the part visible after reducing the
universal coefficient ring modulo~$2$ and modulo the
hyperbolic form~$h$.
Throughout this subsection a unit shift of merge positions
is part of the data.  We write
$\upsilon=(\pi,\pi')$ for such a shift and
$n_i^{(s)}(\Delta,r;\upsilon)$ for the universal integers
of~\eqref{eq:C-struct} attached to the corresponding
wall-crossing in the problem with $s$ double points
$(\Delta,r,s)$.  When
the shift is fixed, we suppress~$\upsilon$ from the notation.

\begin{definition}[Residual coefficient ring]\label{def:res-ring}
Set
\[
  \GWres\coloneqq \GWuniv/(2,h).
\]
Let $\varepsilon$ be the image of~$\ang{2}$ in~$\GWres$.
Then
\[
  \GWres\simeq\bb{F}_2[\varepsilon]/(\varepsilon^2-1),
  \qquad
  \{\ang{1},\varepsilon\}\text{ is an }\bb{F}_2\text{-basis}.
\]
For the problem with $s$ double points let
\[
  R_s\coloneqq
  \GWres[x_1,\dots,x_s]/(x_1^2-1,\dots,x_s^2-1),
  \qquad x_j=\overline{\ang{d_j}}.
\]
For a fixed unit shift~$\upsilon$, the image in~$R_s$ of the
lifted wall-crossing $\widetilde{\Delta N}_\upsilon$ has a
well-defined coefficient of
$x_1\cdots x_s$.  We call this the top coefficient because it
is the coefficient of the monomial involving all double-point
parameters.  Its $\ang{1}$-coordinate is precisely
$n_1^{(s)}(\Delta,r;\upsilon)\bmod2$, and its
$\varepsilon$-coordinate is
$n_2^{(s)}(\Delta,r;\upsilon)\bmod2$.
\end{definition}

\begin{lemma}[Residual local factors]\label{lem:res-factors}
In~$\GWres$ the local factors reduce as follows.
\begin{enumerate}[label=\textup{(\roman*)},nosep]
\item\label{it:res-edge}
  $(m^{\Atrop})^2=1$ if $m$ is odd, and
  $(m^{\Atrop})^2=0$ if $m$ is even.
\item\label{it:res-typeA}
  For a type-A merge of odd weight~$m$,
  \[
    \gammaux(m,d)\cdot m^{\Atrop}
    =1+\frac{m-1}{2}\,\varepsilon(1+x)
    \quad\text{in }\GWres[x]/(x^2-1),
  \]
  while the even-weight type-A factor is zero.  Hence the
  top $x$-coefficient is $\varepsilon$ exactly when
  $m\equiv3\pmod4$, and is zero when
  $m\equiv1\pmod4$ or $m$ is even.
\item\label{it:res-typeR}
  A type-R factor is $\beta=\varepsilon(1+x)$.
\item\label{it:res-twin}
  A twin-edge factor is~$1$ for odd weight and~$0$ for even
  weight.  Consequently a twin tree contributes residually
  only when all its twin edges have odd weight; in that case
  its contribution is
  \[
    \varepsilon^{t-1}
    \sum_{\substack{I\subseteq\{1,\dots,t\}\\
                    |I|\equiv m_\circ\!\!\pmod2}}
    \prod_{i\in I}x_i .
  \]
\end{enumerate}
\end{lemma}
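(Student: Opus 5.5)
The plan is a direct computation. Every local factor has already been lifted to the extended ring $\widetilde{\GW}$ in Step~3.a, through the fixed normal forms of Proposition~\ref{prop:typeA}, Lemma~\ref{lem:elevator-sq}, $\beta_j=\ang{2}+\ang{2}\ang{d_j}$ and \eqref{eq:twin-edge-mult}. So I will reduce each lift under the quotient ring homomorphism
\[
  \widetilde{\GW}\longrightarrow R_s=\GWres[x_1,\dots,x_s]/(x_j^2-1),
\]
which sends $\ang{d_j}\mapsto x_j$. This map is well defined because $(2,h)$ is an ideal of $\GWuniv$ and the relations $\ang{d_j}^2=1$ are preserved.

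First I record the images of the basis symbols. We have $h\mapsto0$. Since $\ang{-1}=h-\ang{1}$, it maps to $-1=1$. Since $\ang{-2}=h-\ang{2}$, it maps to $-\varepsilon=\varepsilon$. Finally $\varepsilon^2=\ang{4}=1$. Consequently $\ang{-d}=\ang{-1}\ang{d}\mapsto x$, $\ang{-2}\ang{d}\mapsto\varepsilon x$, and $\ang{2^{t-1}}\mapsto\varepsilon^{t-1}$.

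Then I take the cases in order.
\begin{itemize}[nosep]
\item Case~\ref{it:res-edge}. Lemma~\ref{lem:elevator-sq} gives $\ang{1}+\frac{m^2-1}{2}h\mapsto1$ for $m$ odd and $\frac{m^2}{2}h\mapsto0$ for $m$ even.
\item Case~\ref{it:res-typeA}. By \eqref{eq:odd-product}, with $c=\frac{m-1}{2}$, the odd-weight factor maps to $1+c(\varepsilon+\varepsilon x)$, because the $h$-term dies. The even-weight factor $\frac{m^2}{2}h$ maps to $0$. The coefficient of $x$ is $c\varepsilon$ taken mod~$2$, which is $\varepsilon$ exactly when $c$ is odd, i.e.\ when $m\equiv3\pmod4$.
\item Case~\ref{it:res-typeR}. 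This is immediate: $\beta\mapsto\varepsilon+\varepsilon x$.
\end{itemize}

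Case~\ref{it:res-twin} is the only step needing a small arithmetic check, so it is the mild obstacle. By \eqref{eq:twin-edge-mult}, $\ang{1}+\ang{-d}\mapsto1+x$ and the $h$-terms vanish. For $m$ odd, $m^2\equiv1\pmod8$, so $\frac{m^2-1}{2}$ is even (indeed divisible by~$4$); hence the factor maps to $1$. For $m$ even, $4\mid m^2$, so $\frac{m^2}{2}$ is even and the factor maps to $0$.

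The twin-tree factor \eqref{eq:twin-mult} is a product of its twin-edge factors, $\ang{2^{t-1}}$, and the subset sum $\sum\ang{\prod_{i\in I}d_i}=\sum\prod_{i\in I}\ang{d_i}$. Reducing each piece, the whole factor vanishes as soon as some twin edge has even weight. Otherwise it equals $\varepsilon^{t-1}\sum_{|I|\equiv m_\circ}\prod_{i\in I}x_i$, as claimed.

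The one point to state carefully is that all these reductions are applied to the fixed lifts in $\widetilde{\GW}$, not to field-valued classes. This is exactly how the top coefficient in Definition~\ref{def:res-ring} was defined, so the residual factors are compatible with the coordinates $n_1\bmod2$ and $n_2\bmod2$.
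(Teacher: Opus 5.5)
Your proposal is correct and takes the same approach as the paper. You reduce each fixed normal form (Lemma~\ref{lem:elevator-sq}, Proposition~\ref{prop:typeA}, $\beta=\ang{2}+\ang{2}\ang{d}$, \eqref{eq:twin-edge-mult} and \eqref{eq:twin-mult}) modulo $(2,h)$, using $\ang{-2}=h-\ang{2}\mapsto\varepsilon$ and the evenness of $\frac{m^2-1}{2}$ and $\frac{m^2}{2}$; your added remark that the reduction is applied to the lifts in $\widetilde{\GW}$ matches how the top coefficient is defined in Definition~\ref{def:res-ring}.
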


\begin{proof}
\ref{it:res-edge} is Lemma~\ref{lem:elevator-sq} modulo
$(2,h)$.  For~\ref{it:res-typeA}, reduce
Proposition~\ref{prop:typeA}: all $h$-terms vanish, and
$\ang{-2}$ has the same image as~$\ang{2}$ because
$\ang{-2}=h-\ang{2}$ in~$\GWuniv$ and the characteristic is
two in~$\GWres$.  The even case is a multiple of~$h$ and
therefore vanishes.  Item~\ref{it:res-typeR} is immediate
from $\beta=\ang{2}+\ang{2}\ang{d}$.
For~\ref{it:res-twin}, reduce~\eqref{eq:twin-edge-mult}:
if $m$ is odd then $(m^2-1)/2$ is even, and if $m$ is even
then $m^2/2$ is even; the $h$-part vanishes.  The displayed
twin-tree expression is then exactly~\eqref{eq:twin-mult}
in~$\GWres$.
\end{proof}

\begin{proposition}[The one-double-point base is tropical]
\label{prop:base-tropical}
For every~$(\Delta,r)$ and every unit shift involving one
double point~$\upsilon$,
\[
  n_1^{(1)}(\Delta,r;\upsilon)\equiv0\pmod2 .
\]
\end{proposition}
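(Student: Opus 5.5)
The plan is to compute the residual top coefficient of Definition~\ref{def:res-ring} directly from the floor diagrams, using only the local reductions of Lemma~\ref{lem:res-factors}, and then show by a local matching of diagrams that its $\ang{1}$-coordinate vanishes. For $s=1$ we have $R_1=\GWres[x]/(x^2-1)$, and the multiplicity of each diagram is a product of residual factors that are either $1$, $0$, or the unique carrying factor containing~$x$.

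\textbf{Reduction to a count.} By Lemma~\ref{lem:res-factors}\ref{it:res-edge} and~\ref{it:res-twin}, any diagram with an even-weight elevator or an even twin edge contributes~$0$. So only \emph{odd diagrams} matter, and for these every non-carrying factor is~$1$. The $x$-coefficient of the carrying factor is then as follows.
\begin{itemize}[nosep]
\item Type~A of weight $m$: it is $\frac{m-1}{2}\varepsilon$.
\item Type~R: it is $\varepsilon$.
\item Twin tree with $t=1$: \eqref{eq:twin-mult} reduces to $\sum_{I\subseteq\{1\},\,|I|\equiv m_\circ}x^{|I|}$, so the $x$-coefficient is $1$ when $m_\circ$ is odd and $0$ otherwise.
\end{itemize}
Only the twin-tree case produces the basis element $\ang{1}$ rather than~$\varepsilon$. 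Hence
\[
  n_1^{(1)}(\Delta,r;\upsilon)\equiv \#T(\pi)-\#T(\pi')\pmod 2,
\]
where $T(\pi)$ is the set of odd floor diagrams compatible with~$\pi$ in which the double point lies on a twin tree with $m_\circ$ odd. The proposition is thus reduced to the parity statement $\#T(\pi)\equiv\#T(\pi')\pmod2$.

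\textbf{Local matching.} A unit shift $\upsilon$ changes the merged pair $(p,p+1)$ into $(p+1,p+2)$ or the reverse, so it only affects three consecutive positions. The vertical stretching lets the rest of the diagram be frozen. I would build a bijection between the diagrams in $T(\pi)$ and in $T(\pi')$ whose restriction away from these three positions agrees, in two steps.
\begin{itemize}[nosep]
\item Suppose the simple point at the moving position lies on a floor or elevator disjoint from the twin tree carrying the double point. Then exchanging the vertical order of the twin tree and that point is a bijection. It preserves oddness, the root weight and the unbounded twin elevators, hence preserves~$m_\circ$.
\item The remaining diagrams are those where the simple point is adjacent to the twin tree: on its root elevator, or on the floor where it attaches. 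For a single double point a twin tree is a doubled subtree whose structure is bounded by the degree. So this is a finite list of local pictures, which I would enumerate using the bipartite description of \cite[Definition~10.1]{JMPR25} and the twin-tree classification of \cite[Def.~3.14]{JMPR25}. In each picture I would check that the contributing configurations on the two sides differ by an even number, typically because they come in pairs obtained by swapping the two copies' attachment or the side of an unbounded end.
\end{itemize}

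\textbf{Main obstacle.} The hard step is the last case analysis. I would first try to avoid it by an auxiliary invariant: compare $\#T$ with the count of twin-tree diagrams in the dissolved problem of Proposition~\ref{prop:labeled-dissolution}, where the two configurations coincide. Since residually the twin-tree factor at $x=1$ equals the sum of its constant and top coefficients, this might express $\#T(\pi)-\#T(\pi')$ as a difference of constant terms. That difference could then be controlled by the $d$-independent data of Step~1 ($P_1=-R_1$), reducing the parity to an $\varepsilon$-coordinate identity. If this shortcut fails, I fall back on the explicit local enumeration above.
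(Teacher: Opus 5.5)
\textbf{Gap: the parity step is never proved.} Your reduction is sound as far as it goes. Residually, every non-carrying factor is $0$ or $1$, and type-A and type-R carrying factors have top coefficient in $\{0,\varepsilon\}$. So $n_1^{(1)}\bmod 2$ is the parity of $\#T(\pi)-\#T(\pi')$. But you never show $\#T(\pi)\equiv\#T(\pi')\pmod 2$. The bijection in your second bullet is only announced (``I would enumerate\ldots, I would check\ldots''), and the shortcut is speculative. Moreover, the relation $P_1=-R_1$ of Step~1 is established only in $\GW(k)$, after applying the coefficient map (Remark~\ref{rem:formal-dissolution}). By itself it does not give an identity between universal or residual coordinates from which a parity could be read off. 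As written, the proposal replaces the claim by an unproved combinatorial parity statement about diagrams across the shift.

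\textbf{Missing idea: $T(\pi)$ is empty.} So no matching across the unit shift is needed. By \cite[Rem.~4.4(3) and the proof of Lem.~8.2]{JMPR25}, a twin tree carrying exactly one double point must be the simplest one: a single unbounded double elevator of weight~$1$, the tree $\cT_1$ of Figure~\ref{fig:twin-trees}. It has no bounded twin edges, and $m_\circ=1+1=2$ is even. Hence the parity condition in \eqref{eq:twin-mult} selects only $I=\emptyset$, and $\mult^{\Atrop}(\cT)=\ang{1}$ has no $x_1$-term. Consequently, in every floor diagram the residual top $x_1$-coefficient is $0$ or $\varepsilon$. The residual top coefficient of each of the two floor sums therefore has zero $\ang{1}$-coordinate, and so does that of their difference. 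This is the paper's argument, and it does not use the unit shift at all. With this observation, your first paragraph already constitutes a complete proof.
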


\begin{proof}
Work in the residual ring of Definition~\ref{def:res-ring}.
The coefficient $n_1^{(1)}\bmod2$ is the $\ang{1}$-coordinate
of the top $x_1$-coefficient.  By
Lemma~\ref{lem:res-factors}, a type-A merge contributes a
top coefficient either~$0$ or~$\varepsilon$, and a type-R
merge contributes~$\varepsilon$.

It remains to consider the case where the unique double
point lies on a twin tree.  By
\cite[Rem.~4.4(3) and the proof of Lem.~8.2]{JMPR25}, the
simplest twin tree consists of one unbounded double elevator
of weight~$1$, and this is the only possibility when
$t=1$.  In the notation of formula~\eqref{eq:twin-mult},
there are no bounded twin edges, the twin-edge factor is
$\ang{1}$, and the parity condition selects the empty subset;
equivalently the twin-tree multiplicity is~$\ang{1}$,
independent of~$d_1$.  Thus it has no top
$x_1$-coefficient.  No possible local factor contributes an
$\ang{1}$-coordinate to the top coefficient, so
$n_1^{(1)}\equiv0$.
\end{proof}

\begin{definition}[Dissolution of a diagram]
\label{def:dissolve}
Let $\cD$ be a floor diagram of degree~$\Delta$ with
$r$ simple points and $s$~double points.  The
\emph{dissolution at $d_s$} is the floor diagram~$\cD'$ of
the problem with $r+2$ simple points and $s-1$ double points
obtained by specialising the $s$-th local factor at $d_s=1$:
type-A merges become two adjacent simple contributions with
multiplicity~$(m^{\Atrop})^2$, type-R merges become the
two-summand factor $2\ang{2}$, and twin-tree double points
are dissolved as in the proof of
\cite[Prop.~5.4]{JMPR25}.  This is the operation underlying
Corollary~\ref{cor:dissolution}.
\end{definition}

\begin{corollary}[Residual transfer]
\label{cor:sum-transfer}
This corollary follows from Theorem~\ref{thm:main}.  It records
the residual congruence that a fully tropical replacement for
Step~6 would establish directly.
For every~$(\Delta,r)$, every $s\geq2$, every source unit
shift~$\upsilon$ in the problem with $s$ double points, and
every target unit shift~$\upsilon'$ in the problem with
$s-1$ double points obtained after dissolving one double
point,
\[
  n_1^{(s)}(\Delta,r;\upsilon)
  \equiv n_2^{(s-1)}(\Delta,r+2;\upsilon')
  \pmod2 .
\]
Equivalently, in the residual ring of
Definition~\ref{def:res-ring}, the $\ang{1}$-coordinate of
the top coefficient before dissolving one double point equals
the $\varepsilon$-coordinate of the top coefficient after
dissolution.
\end{corollary}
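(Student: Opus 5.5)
The plan is to show that both sides of the congruence are separately $\equiv 0\pmod 2$, using the results already obtained in the proof of Theorem~\ref{thm:main}. No new comparison between the two top coefficients is needed, which matches the remark that the corollary \emph{follows from} Theorem~\ref{thm:main}.

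First I check that the target integers exist. By Lemma~\ref{lem:merge-graph}, dissolving a labelled pair other than the shifted one turns the source unit shift~$\upsilon$ into the same unit shift~$\upsilon'$ in the problem $(\Delta,r+2,s-1)$. Since $s\ge2$, this problem still has at least one double point. So Steps~1--3 apply to it and produce a lifted wall-crossing $\widetilde{\Delta N}_{\upsilon'}$ and universal integers $n_1^{(s-1)},n_2^{(s-1)},m^{(s-1)}$ via \eqref{eq:C-struct}. By Definition~\ref{def:res-ring}, their residues are the $\ang{1}$- and $\varepsilon$-coordinates of the residual top coefficient. The source integers $n_i^{(s)}(\Delta,r;\upsilon)$ are defined in the same way.

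Second, I treat the left side. Step~6 of the proof of Theorem~\ref{thm:main} was carried out for an arbitrary unit shift with an arbitrary number of double points. The Laurent-field specialisation over $k_s=\bb{Q}((u_1))\cdots((u_s))$, together with the anisotropy of $\Pi_s$, gives \eqref{eq:parity-cond-final}, namely $n_1^{(s)}(\Delta,r;\upsilon)\equiv0\pmod2$.

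Third, I treat the right side. Apply Step~4 to the problem $(\Delta,r+2,s-1)$ with the shift~$\upsilon'$. Proposition~\ref{prop:broccoli}, via the signature over $\bb{R}$ with all parameters negative, gives $n_1^{(s-1)}+n_2^{(s-1)}=0$. This is exactly \eqref{eq:C-after-brocc}, so $n_2^{(s-1)}=-n_1^{(s-1)}$. Then apply Step~6 again, now with $s-1\ge1$ double points, to get $n_1^{(s-1)}\equiv0\pmod2$. When $s-1=1$, Proposition~\ref{prop:base-tropical} gives this directly and without algebraic input. Hence $n_2^{(s-1)}(\Delta,r+2;\upsilon')\equiv0\pmod 2$, both sides vanish mod~$2$, and the congruence follows. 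The equivalent residual formulation is just Definition~\ref{def:res-ring} read coordinatewise.

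The only step needing care is the bookkeeping of integers under dissolution. The integers are attached to the fixed normal-form lift $\widetilde{\Delta N}$ in $\widetilde{\GW}$, not to field-valued classes. So I must make sure that the lift used for $\upsilon'$ is the one built from the dissolved floor diagrams of Definition~\ref{def:dissolve}, so that Steps~3--6 apply to it verbatim. Since those steps were proved for every $(\Delta,r,s)$ and every unit shift, this is a matter of matching definitions rather than a genuine obstacle.
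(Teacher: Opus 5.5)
Your proposal is correct and is essentially the paper's own proof: both sides vanish modulo~$2$ separately. The left side vanishes by Step~6, and the right side by combining the Step~4 relation $n_1^{(s-1)}+n_2^{(s-1)}=0$ with Step~6 applied to the target problem $(\Delta,r+2,s-1)$. Your extra remarks (the existence of the target integers via Lemma~\ref{lem:merge-graph}, and the tropical shortcut through Proposition~\ref{prop:base-tropical} when $s-1=1$) are harmless additions to that argument.
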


\begin{proof}
By Step~6 in the proof of Theorem~\ref{thm:main},
\[
  n_1^{(s)}(\Delta,r;\upsilon)\equiv0\pmod2
\]
for every problem with $s$ double points.  Applying the same
conclusion to the target problem $(\Delta,r+2,s-1)$ gives
$n_1^{(s-1)}(\Delta,r+2;\upsilon')\equiv0\pmod2$.  Step~4,
applied to that target wall-crossing, gives the integer
identity
\[
  n_1^{(s-1)}(\Delta,r+2;\upsilon')+
  n_2^{(s-1)}(\Delta,r+2;\upsilon')=0,
\]
because broccoli invariance forces the real signature of
the corresponding wall-crossing to vanish.  Hence
$n_2^{(s-1)}(\Delta,r+2;\upsilon')\equiv0\pmod2$ as well.
Both sides of the asserted residual-transfer congruence are
therefore zero modulo~$2$.
\end{proof}

\begin{remark}[What remains to prove tropically]
\label{rem:ind-step-status}
Lemma~\ref{lem:res-factors} shows exactly where a naive
diagram-by-diagram dissolution fails.
\begin{itemize}[nosep,leftmargin=*]
\item A type-A vertex of weight $m\equiv1\pmod4$ has no
  residual top $x_s$-coefficient, whereas the dissolved odd
  elevator square contributes~$1$.
\item A twin tree with several double points contributes the
  parity-restricted sum
  $\varepsilon^{t-1}\sum_{|I|\equiv m_\circ}x_I$; dissolving
  one of its double points changes the parity constraint
  into the product of the remaining type-R factors used in
  \cite[Prop.~5.4]{JMPR25}.
\end{itemize}
Thus Corollary~\ref{cor:sum-transfer} is a global statement,
rather than an identity of individual diagrams.  The proof above
establishes it through the pointwise algebraic parity closure of
Step~6.  A direct tropical replacement for Step~6 would require
proving the same global parity statement
by a bridge argument in the spirit of~\cite[Thm.~3.6]{GMS13}
and~\cite[Thm.~5.14]{GMS13}: the problematic terms must
cancel over connected components of the relevant residual
bridge graph.
\end{remark}

\begin{proposition}[Formal tropical closure criterion]
\label{prop:reduction}
Suppose the residual-transfer congruence of
Corollary~\ref{cor:sum-transfer} is proved, for the relevant
source and target unit shifts, by an argument independent of
the pointwise algebraic specialisation in Step~6.  Then
$n_1^{(s)}(\Delta,r;\upsilon)\equiv0\pmod2$ for every
$s\geq1$ and every unit shift~$\upsilon$.
Consequently Step~6, and hence Theorem~\ref{thm:main},
would follow without the pointwise algebraic
specialisation to an iterated Laurent series field.
\end{proposition}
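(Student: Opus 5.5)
We argue by induction on $s\geq1$, simultaneously over all $(\Delta,r)$ and all unit shifts $\upsilon$. The only inputs are the residual-transfer congruence, assumed proved independently of Step~6, the base case Proposition~\ref{prop:base-tropical}, and the broccoli identity $n_1+n_2=0$ from Step~4. Step~4 is applied only to the \emph{target} wall-crossing, and it depends only on Steps~1--3 and Proposition~\ref{prop:broccoli}, not on Step~6.

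\emph{Base case $s=1$.} This is Proposition~\ref{prop:base-tropical}. Its proof is purely residual: it inspects the local factors of Lemma~\ref{lem:res-factors} and observes that none of them contributes an $\ang{1}$-coordinate to the top $x_1$-coefficient. No algebraic specialisation is used.

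\emph{Inductive step $s\geq2$.} Fix a source unit shift $\upsilon$ in the problem $(\Delta,r,s)$. Dissolve one double point other than the shifted one, say the smallest label distinct from the shifted pair. By Lemma~\ref{lem:merge-graph}, the two dissolved configurations still differ by the same unit shift, which we call $\upsilon'$, now in the problem $(\Delta,r+2,s-1)$. The assumed congruence gives
\[
  n_1^{(s)}(\Delta,r;\upsilon)\equiv n_2^{(s-1)}(\Delta,r+2;\upsilon')\pmod2 .
\]

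Next we treat the target. The problem $(\Delta,r+2,s-1)$ again has $s-1\geq1$ double points, so Steps~1--4 apply to $\upsilon'$. Steps~1--2 for $s-1$ invoke the theorem only for fewer than $s-1$ double points. That invocation needs merge-position invariance, not just the parity, so we strengthen the induction hypothesis to ``$n_1^{(t)}\equiv0$ for all $t<s$, and hence invariance for all $t<s$''. Steps~1--5 turn parity into invariance at each level: once $n_1^{(t)}$ is even, \eqref{eq:Pfister} and Lemma~\ref{lem:two-torsion} give $\Delta N=0$.

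With this hypothesis, the cascade for the target is valid. Broccoli invariance (Proposition~\ref{prop:broccoli}) then yields the integer identity $n_1^{(s-1)}+n_2^{(s-1)}=0$ for $\upsilon'$. By induction $n_1^{(s-1)}(\Delta,r+2;\upsilon')$ is even, so $n_2^{(s-1)}(\Delta,r+2;\upsilon')$ is even as well. Therefore $n_1^{(s)}(\Delta,r;\upsilon)$ is even.

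Finally, feeding this back into Step~5 gives $\Delta N=0$ for every unit shift with $s$ double points. By the connectivity in Lemma~\ref{lem:merge-graph}, this is invariance at level $s$, which closes the strengthened induction. The conclusion for Theorem~\ref{thm:main} follows: Step~6 is replaced by this induction.

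The main obstacle is the bookkeeping of well-foundedness. We must check that the integers $n_i^{(s)}$ are defined, via the lift $\widetilde C$ of Step~3, before invariance at level $s$ is known. We must also check that the cascade for the target problem uses only invariance at levels $<s-1$, which the strengthened hypothesis provides. A secondary point is that a double point other than the shifted one exists when $s\geq2$, so that the dissolution produces a genuine unit shift $\upsilon'$ rather than identical configurations.
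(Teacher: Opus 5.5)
Your proof is correct and follows essentially the same route as the paper. The paper argues by induction on $s$: the one-double-point case as base, then the assumed residual-transfer congruence, then the broccoli identity $n_1^{(s-1)}+n_2^{(s-1)}=0$ for the target shift, and finally the $2$-torsion of the Pfister obstruction. Your strengthening of the induction hypothesis to include merge-position invariance at lower levels is needed, because the cascade and hence Step~4 on the target use it; you make explicit a point the paper's terse proof leaves implicit.
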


\begin{proof}
We argue by induction on~$s$.  The case $s=1$ is
Proposition~\ref{prop:base-tropical}.  For $s\geq2$,
the assumed direct residual-transfer proof gives
\[
  n_1^{(s)}(\Delta,r;\upsilon)
  \equiv n_2^{(s-1)}(\Delta,r+2;\upsilon')\pmod2 .
\]
By broccoli invariance, as used in Step~4,
$n_1^{(s-1)}+n_2^{(s-1)}=0$ for the target
wall-crossing~$\upsilon'$.  Modulo~$2$ this gives
$n_2^{(s-1)}(\Delta,r+2;\upsilon')\equiv
n_1^{(s-1)}(\Delta,r+2;\upsilon')$.  The induction
hypothesis then gives
$n_1^{(s)}(\Delta,r;\upsilon)\equiv0$.
By Lemma~\ref{lem:two-torsion}\ref{it:2tors-pfister}, the
Pfister obstruction~\eqref{eq:Pfister} is $2$-torsion; since
its coefficient $n_1^{(s)}$ is even, the obstruction vanishes
for every admissible field.
\end{proof}

\begin{remark}[Inputs used by each step]
\label{rem:2torsion-role}
The proof assembles four universal inputs:
\begin{enumerate}[label=\textup{(\arabic*)},nosep]
\item the \emph{dissolution cascade} (Steps~1--2), a purely
  tropical construction that reduces wall-crossing
  invariance to a single universal coefficient
  $\widetilde{C}\in\GWuniv$;
\item \emph{broccoli invariance}
  (\cite[Cor.~5.17]{GMS13}), a purely tropical theorem
  forcing
  $\sgn\bigl(\varphi_\bb{R}(\widetilde{C})\bigr)=0$
  and hence reducing $\widetilde{C}$ to
  $n_1(\ang{1}-\ang{2})+mh$;
\item the \emph{universal $2$-torsion identity}
  $2\ang{a}=2\ang{2a}$
  (Lemma~\ref{lem:two-torsion}, a direct consequence of
  Witt's relation), which makes $\Delta N\in\GW(k)$
  $2$-torsion, reducing full invariance to the parity
  $n_1\bmod 2$;
\item the \emph{quadratically enriched correspondence}
  \cite[Thm.~1.2]{JMPR25}, together with JMPR's recalled
  algebraic invariance and Puiseux comparison
  \cite[Thm.~2.19 and Lem.~2.22]{JMPR25}, applied pointwise over
  \[
    \bb{Q}((u_1))\cdots((u_s)),\qquad d_l=u_l,
  \]
  to verify the parity~$n_1\equiv 0\pmod 2$.
\end{enumerate}
The new content is the cascade and universal-coefficient
reduction in~(1), together with its residual formulation.
The broccoli input~(2) and the Witt identity~(3) are standard
ingredients; (4) is a \emph{pointwise} verification over one
specific iterated Laurent series field.  This pointwise
verification is enough: a single discrete $\bb{Z}/2$-constraint
on one integer, propagated by field-independence of~$n_1$, suffices
to close.  A strictly tropical alternative to~(4), via a
residual bridge argument, is outlined in~\S\ref{sub:trop-reduction}.
\end{remark}

\section{Comparison with broccoli curves and an open
  direction}\label{sec:outlook}

\subsection{Scope of the reduction}
\label{sub:comparison}

Our argument reduces the positional invariance of the
$\GW(k)$-valued tropical count to four universal inputs:
broccoli invariance~\cite[Cor.~5.17]{GMS13}, the elementary
Witt identity $2\ang{a}=2\ang{2a}$, the dissolution cascade,
and a pointwise application of JMPR's algebraic invariance,
Puiseux comparison, and correspondence
\cite[Thm.~2.19, Lem.~2.22, and Thm.~1.2]{JMPR25}
over one iterated Laurent series field.  The
main novelty is the \emph{reduction itself}: via the cascade
and the $2$-torsion identification, the full
field-independent invariance is reduced to a discrete parity
check on a single integer, which a single pointwise
evaluation closes.  All intermediate algebra lives in the
rank-$3$ universal ring
$\GWuniv=\bb{Z}[V_4]/(\ang{2}+\ang{-2}-h)$.

The reduction still uses the real tropical invariance theorem
of~\cite{GMS13} as its scalar input.  The combinatorics of
broccoli curves, the bridge construction, the local invariance
at $4$-valent walls, and the matching of higher-valent moduli
cones enter through that theorem.  In the cascade, this input
determines the real signature relation $n_1+n_2=0$, which is
the scalar step leading to the Pfister obstruction.

\subsection{Open directions}\label{sub:open}

\subsubsection{A fully tropical closure of the parity step}
\label{subsub:fully-tropical-closure}

Proposition~\ref{prop:base-tropical} proves the
one-double-point parity purely inside the floor-diagram
formalism.
Thus the only remaining obstruction to a completely
tropical replacement for Step~6 is the residual transfer
congruence of Corollary~\ref{cor:sum-transfer}.  In the quotient
$\GWres=\GWuniv/(2,h)$, Lemma~\ref{lem:res-factors}
identifies the two sources of non-local behaviour:
type-A vertices of weight $m\equiv1\pmod4$, and twin trees
carrying several double points whose parity-restricted subset
sum changes under dissolution.  These are precisely the
configurations for
which a naive diagram-level bijection cannot work.

The natural next step is a residual bridge argument: construct the
bridge graph whose vertices are the problematic residual
floor diagrams, orient its one-dimensional cells as in
\cite{GMS13}, and prove that the residual multiplicity of
Lemma~\ref{lem:res-factors} is balanced on every connected
component.  This would prove
Corollary~\ref{cor:sum-transfer} directly, and then
Proposition~\ref{prop:reduction} would close the theorem
without the pointwise algebraic specialisation used in
Step~6.

\subsubsection{\texorpdfstring{A $\GW(k)$-valued broccoli theory}{A GW(k)-valued broccoli theory}}
\label{subsub:gw-broccoli}

A second natural direction, suggested by the structure of our
proof, is whether the broccoli construction itself can be
\emph{refined} to a $\GW(k)$-valued theory.
Our proof shows that the quadratic enrichment is encoded, after
Step~4, in the universal integer~$n_1$, and that the full
wall-crossing difference
collapses to a signed multiple of the $2$-torsion class
$\langle\!\langle 2,d_1,\dots,d_s\rangle\!\rangle$.

This suggests looking at the quotient ring
$\overline{\GW}(k)\coloneqq\GW(k)/(h,\ang{1}-\ang{2})$ as the
natural target at which the wall-crossing vanishes
identically.  Modulo~$h$ and $\ang{1}-\ang{2}$, our cascade
already yields $\Delta N\equiv 0$, bypassing any parity
argument.  One might hope to define a
$\overline{\GW}(k)$-valued \emph{broccoli multiplicity}
whose local invariance in the style of
\cite[Thm.~3.6]{GMS13} could be proved directly by analogy
with the Welschinger case, thereby subsuming our argument
and simultaneously furnishing a purely tropical proof of
the parity step.

We leave both programmes to future work.

\section*{Acknowledgements}
This paper grew out of a problem suggested by Hannah Markwig
during an internship in my second year at the \'Ecole normale
sup\'erieure de Paris (ENS Ulm), namely my M1 year.  I am
grateful to her for proposing this question and for her guidance.
I also thank Ilia Itenberg for helpful advice at the beginning of
that year, when I was starting to learn algebraic geometry, and
for his excellent course on enumerative geometry at Jussieu.
Finally, I thank Emmanuel Giroux for encouraging me to write the
paper up promptly.

\end{document}

%% file: figures/drawing_tools.tex
\def\vertexsize {1.5pt}

\def\doubleedgesep {0.03}

\newcommand{\thinpoint}[1]{\fill (#1) circle [radius = \vertexsize];}
\newcommand{\fatpoint}[1]{\fill (#1) circle [radius = 2*\vertexsize];}

\newcommand{\edgeweight}[1]{{\color{blue}#1}}

\newcommand{\quadextension}[1]{{\color{red}#1}}

%% file: figures/combtypes_fatpoint_2.tex
\tikzset{every picture/.style={line width=0.75pt}}
\setlength\tabcolsep{0.5cm}
\begin{tabular}{c c c c c}
    \begin{tikzpicture}
        \path[draw] (0,0) --+ (90:1);
        \path[draw] (0,0) --+ (210:1);
        \path[draw] (0,0) --+ (-30:1);
        \fatpoint{0, 0}
        \draw (0, -1) node {\crtcrossreflabel{(A)}[fatPointOnVertex]};
    \end{tikzpicture}
    &
    \begin{tikzpicture}
        \path[draw] (30:-1) -- (30:1);
        \path[draw] (60:-1) -- (60:1);
        \fatpoint{0, 0}
        \draw (0, -1) node {\crtcrossreflabel{(B)}[fatPointOnParallelogram]};
    \end{tikzpicture}
    &
    \begin{tikzpicture}
        \path[draw] (0,0) -- (135:0.5);
        \path[draw] (0,0) -- (-135:0.5);
        \path[draw, double] (0,0) -- (1,0);
        \fatpoint{0.5, 0}
        \draw (0.5, -1) node {\crtcrossreflabel{(C)}[fourValentVertex]};
    \end{tikzpicture}
    &
    \begin{tikzpicture}
        \path[draw, double] (30:-1) -- (30:0.7);
        \path[draw] (60:-1) -- (60:0.7);
        \fatpoint{30:-0.6}
        \thinpoint{60:-0.6}
        \draw (0, -1) node {\crtcrossreflabel{(D)}[parallelogramWithOneDoubleEdge]};
    \end{tikzpicture} 
    &
    \begin{tikzpicture}
        \path[draw, double] (30:-1) -- (30:0.7);
        \path[draw, double] (60:-1) -- (60:0.7);
        \fatpoint{30:-0.6}
        \fatpoint{60:-0.6}
        \draw (0, -1) node {\crtcrossreflabel{(E)}[parallelogramWithTwoDoubleEdges]};
    \end{tikzpicture}
    \\[1cm]
    \begin{tikzpicture}
        \path[draw] (0,\doubleedgesep) --+ (135:0.5);
        \path[draw] (0,\doubleedgesep) --+ (-90:0.5);
        \path[draw] (0,\doubleedgesep) -- (1,\doubleedgesep);
        \path[draw] (-0.5, -\doubleedgesep) -- (1, -\doubleedgesep);
        \fatpoint{0.5, 0}
        \draw (0.5, -1) node {\crtcrossreflabel{(F)}[triangleWithMergedEdge]};
    \end{tikzpicture}
    &
    \begin{tikzpicture}
        \path[draw] (0,\doubleedgesep) --+ (135:0.5);
        \path[draw, double] (0,\doubleedgesep) --+ (-90:0.7);
        \path[draw] (0,\doubleedgesep) -- (1,\doubleedgesep);
        \path[draw] (-0.5, -\doubleedgesep) -- (1, -\doubleedgesep);
        \fatpoint{0.5, 0}
        \fatpoint{0, -0.4}
        \draw (0.5, -1) node {\crtcrossreflabel{(G)}[fourValentVertexWithMergedEdge]};
    \end{tikzpicture}
    &
    \begin{tikzpicture}
        \path[draw] (\doubleedgesep,\doubleedgesep) --+ (135:0.5);
        \path[draw] (\doubleedgesep,\doubleedgesep) -- (\doubleedgesep, -0.7);
        \path[draw] (\doubleedgesep,\doubleedgesep) -- (1,\doubleedgesep);
        \path[draw] (-0.5, -\doubleedgesep) -- (1, -\doubleedgesep);
        \path[draw] (-\doubleedgesep, -0.7) -- (-\doubleedgesep, 0.5);
        \fatpoint{0.5, 0}
        \fatpoint{0, -0.4}
        \draw (0.5, -1) node {\crtcrossreflabel{(H)}[triangleWithTwoMergedEdges]};
    \end{tikzpicture}
    &
    \begin{tikzpicture}
        \path[draw, double] (0,0) -- (1, 0);
        \path[draw, double] (0,-0.7) -- (0, 0) --+ (135:0.5);
        \fatpoint{0.5, 0}
        \fatpoint{0, -0.4}
        \draw (0.5, -1) node {\crtcrossreflabel{(I)}[allDoubleVertex]};
    \end{tikzpicture}
    &
    \begin{tikzpicture}
        \path[draw, double] (0,0) -- (1, 0);
        \path[draw, double] (0,-0.7) -- (0, 0);
        \path[draw] (0,0) --+ (115:0.5);
        \path[draw] (0,0) --+ (165:0.5);
        \fatpoint{0.5, 0}
        \fatpoint{0, -0.4}
        \draw (0.5, -1) node {\crtcrossreflabel{(J)}[mergedTriangles]};
    \end{tikzpicture}
\end{tabular}

%% file: figures/twin_trees.tex
\begin{tikzpicture}
			\path[draw, double] (-2, 4) --++ (1, 0) --++ (1, 1);
			\fatpoint{-1.5, 4}
	        \draw[thick, gray] (-1, 4) circle (4pt);
			
			\path[draw, double] (-2, 2) --++ (2, 0) --++ (1, 1);
			\fatpoint{-0.5, 2}
	        \draw[thick, gray] (0, 2) circle (4pt);
			
			\path[draw, double] (-1, 4) --++ (0, -3);
			\fatpoint{-1, 3.5}
			
			\path[draw, double] (0, 2) -- ++ (0, -2);
			\fatpoint{0, 1}
			
			\path[draw, double] (-2, 1) --++ (1, 0) --++ (1, -1) --++ (0.5, -1) --++ (1, 0) -- ++ (1,1);
			\fatpoint{0.25, -0.5}
		
		\path[draw, double] (0.5, -1) --++ (0, -1); 
		\fatpoint{0.5, -1.5}
		\draw (0.3, -1.5) node[anchor = east] {\edgeweight{$(2, 2)$, \quadextension{$d_1$}}};
		
		\path[draw, double] (1.5, -1) --++ (0, -2.5);
		\fatpoint{1.5, -2.5}
		
			\path[draw] (0, -2) --++ (0.5, 0) --++ (0.5, -1);

		\draw (0, -3.5) node[anchor = north] {$\cT_3$};
        \draw (0, -4) node[anchor = north] {$t = 7$, \quad $m_\circ = 2 + 1$};
        \draw (0, -4.4) node[anchor = north, align = center] {$\mult^{\AA^1}(\cT_3) = \Big( 2\big(\gw{1} + \gw{-d_1} \big) + 6\h \Big) \cdot \gw{2^6}$ \\ $\cdot \Big( \sum_{\substack{I \subseteq \{1, \ldots 7\}  \\ |I| \text{ odd}}} \gw{\prod_{i \in I} d_i}\Big)$};
		
		\begin{scope}[shift={(-8, 3)} = ]
			\path[draw, double] (0, 1) -- ++ (0, -2);
			\fatpoint{0, -0}
			\path[draw] (-0.5, -1) --++ (0.5,0) --++ (0.25, -0.5);
			
			\draw (0, -1.5) node[anchor = north] {$\cT_1$};
            \draw (0, -2) node[anchor = north] {$t = 1$, \quad $m_\circ = 1 + 1$};
            \draw (0, -2.5) node[anchor = north] {$\mult^{\AA^1}(\cT_1) = \gw{1}$};
		\end{scope}
	
	\begin{scope}[shift={(-8, -2)}]
		\path[draw,double] (-1, 0) --++ (1, 0) --++ (1,1);
		\fatpoint{-0.5, 0}
		\path[draw, double] (0, 0) -- ++ (0, -1);
		\fatpoint{0, -0.5}
		\path[draw] (-0.5, -1) --++ (0.5,0) --++ (0.25, -0.5);
		
		\draw (0, -1.5) node[anchor = north] {$\cT_2$};
        \draw (0, -2) node[anchor = north] {$t = 2$, \quad $m_\circ = 1 + 0$};
        \draw (0, -2.5) node[anchor = north] {$\mult^{\AA^1}(\cT_2) = \gw{2d_1} + \gw{2 d_2}$};
	\end{scope}
		
	\end{tikzpicture}